\newtheorem{thm}{Theorem}[section]
\newtheorem{theoremalpha}{Theorem}
\newtheorem{lemma}[thm]{Lemma}
\newtheorem{cor}[thm]{Corollary}
\newtheorem{fact}[thm]{Fact}
\theoremstyle{definition}
\newtheorem{remark}[thm]{Remark}
\newtheorem{defi}[thm]{Definition}
\newtheorem{convention}[thm]{}
\numberwithin{equation}{section}
\newcommand{\isoto}{\stackrel{\backsim}{\longrightarrow}}
\newcommand{\w}{\widetilde}
\newcommand{\ov}{\overline}
\newcommand{\wh}{\widehat}
\newcommand{\Aut}{\operatorname{Aut}}
\newcommand{\Hom}{\operatorname{Hom}}
\newcommand{\End}{\operatorname{End}}
\newcommand{\Spec}{\operatorname{Spec}}
\DeclareMathOperator{\id}{id}
\newcommand{\SL}{\operatorname{SL}}
\newcommand{\Stab}{\operatorname{Stab}}
\newcommand{\Gr}{\operatorname{Gr}}
\newcommand{\calI}{{ \mathcal I}}
\newcommand{\calO}{{ \mathcal O}}
\newcommand{\calQ}{{ \mathcal Q}}
\newcommand{\calX}{{\mathcal X}}
\newcommand{\bbC}{{\mathbb C}}
\newcommand{\bbG}{{\mathbb G}}
\newcommand{\bbN}{{\mathbb N}}
\newcommand{\bbP}{{\mathbb P}}
\newcommand{\bbQ}{{\mathbb Q}}
\newcommand{\bbZ}{{\mathbb Z}}
\newcommand{\Mgbar}{\ov{\operatorname{M}}_g}
\newcommand{\pdbst}{\bar{J}_{d,g}}
\newcommand{\pdbstFunc}{\bar{J}_{d,g}^{\sharp}}
\newcommand{\Quot}{\operatorname{Quot}}
\newcommand{\Def}{\operatorname{Def}}
\newcommand{\Spf}{\operatorname{Spf}}
\newcommand{\art}{ \operatorname{Art}_{k} }
\newcommand{\h}[1][R]{\operatorname{Spf}(#1)}
\newcommand{\Sets}{\text{Sets}}
\newcommand{\sets}{\text{Sets}}
\newcommand{\el}[1][e]{\overset{\leftarrow}{#1}}
\newcommand{\er}[1][e]{\overset{\rightarrow}{#1}}
\newcommand{\node}{\calO_{0}}
\newcommand{\nodenm}{\tilde{\calO}_{0}}
\newenvironment{sis}{\left\{\begin{aligned}}{\end{aligned}\right.}
\begin{document}

\title[Deformation Theory]{The Local Structure of Compactified Jacobians}

\author[Casalaina-Martin]{Sebastian Casalaina-Martin}
\address{University of Colorado at Boulder, Department of Mathematics,
Colorado (USA)}
\email{casa@math.colorado.edu}

\author[Kass]{Jesse Leo Kass}
\address{University of South Carolina, Department of Mathematics, 
South Carolina (USA)}
\email{kassj@math.sc.edu}

\author[Viviani]{Filippo Viviani}
\address{Roma Tre University, Department of Mathematics and Physics, 
Rome (Italy)}
\email{viviani@mat.uniroma3.it}

\thanks{The first author was supported by NSF grant DMS-1101333.  The second author was supported by NSF grant DMS-0502170. The third author is supported
 by the MIUR project \textit{Spazi di moduli e applicazioni} (FIRB 2012),  by CMUC and by the FCT-grants   PTDC/MAT-GEO/0675/2012 and EXPL/MAT-GEO/1168/2013.}

\subjclass[2010]{Primary 14D20, 14H40, Secondary  14D15, 14H20. }

\begin{abstract}
This paper studies the local geometry of compactified Jacobians.
The main result is a presentation of the completed local ring of the compactified Jacobian of a nodal curve as an explicit ring of invariants described in terms of the dual graph of the
curve.  The authors have investigated the geometric and combinatorial properties of these rings in previous work, and consequences for compactified Jacobians are presented in this
paper.
Similar results are given for the local structure of the universal compactified Jacobian over the moduli space of stable curves.
\end{abstract}

\maketitle

\bibliographystyle{amsplain}

\vspace{-0.5cm}

\section{Introduction} \label{Sec: Intro}
This paper studies the local geometry of compactified Jacobians associated to  nodal curves.  These are projective varieties that play a role similar to that of
the Jacobian variety for a non-singular curve.   Recall that a Jacobian can be viewed as the moduli space of   line bundles (of fixed degree) on a non-singular curve.
A compactified Jacobian is an analogous parameter space associated to a nodal curve.
A major barrier to  constructing these spaces is that, while the moduli space of fixed degree line bundles on a nodal curve exists,
it typically does not have nice properties: often it has infinitely many connected components (i.e.~is not of finite type), and these components fail to be proper.  To construct a well-behaved compactified Jacobian,
one must modify the moduli problem.
There are a number of different ways to do this, and the literature on the compactification
problem is vast (e.g.~\cite{ishida},  \cite{dsouza}, \cite{OS},  \cite{altman80}, \cite{Ses}, \cite{caporaso}, \cite{simpson}, \cite{Pan}, \cite{jarvis}, \cite{esteves01}, \cite{melo07}, \cite{ftt}).

 Geometric Invariant Theory (GIT) provides a general framework for these types of compactification problems, and in this approach, a compactified Jacobian $\bar{J}(X)$ of a nodal
curve $X$
is constructed as a coarse moduli space of certain line bundles together with their degenerations: rank $1$, torsion-free sheaves.
The sheaves parameterized by $\bar{J}(X)=\bar{J}_{\phi}(X)$ are those rank $1$, torsion-free sheaves that are semistable with respect to a numerical polarization $\phi$ (see Definitions \ref{D:numpol}
and \ref{D:phi-ss}). As explained in \cite[Sec. 2]{MV},  this semistability condition generalizes the other know semistability conditions  that appear in the work of Oda-Seshadri \cite{OS}, Seshadri \cite{Ses}, Esteves \cite{esteves01} as well as the more familiar slope semistability condition with respect to an ample line bundle that appears in the work of Simpson \cite{simpson}.
In general, compactified Jacobians are non-fine moduli spaces because typically there are non isomorphic semistable sheaves $I$ and $I'$ that correspond to the same point  $[I]=[I']\in \bar{J}(X)$. 
Indeed, this happens precisely when two Jordan-H\"older filtrations of $I$ and $I'$ have the same stable factors. If a Jordan-H\"older filtration of $I$ splits, i.e. if $I$ is  the direct sum of stable sheaves supported on subcurves of $X$, we say that $I$ is polystable. Therefore, given a point $x\in \bar{J}(X)$, there exists a unique polystable sheaf $I$ such that $[I]=x\in \bar{J}(X)$; see \S  \ref{SS:compJac} for more details.

One motivation for constructing compactified Jacobians is that they provide degenerations of
Jacobian varieties.  Given a family of non-singular curves specializing to a
nodal curve, the compactified Jacobian of the nodal fiber fits into a family that extends the
family consisting of the Jacobians of the non-singular fibers.   Note that because the coarse moduli
space of stable curves $\Mgbar$ does not admit a universal curve, this does not  imply that the compactified Jacobians
 fit into a family over $\Mgbar$.  However,
Caporaso \cite{caporaso} (and later Pandharipande \cite{Pan}) has constructed a family $\Phi: \pdbst \to \Mgbar$ (which we call the universal compactified Jacobian) of projective
schemes that extends the Jacobian of the generic genus $g$ curve.  The fiber of $\Phi$ over a stable curve $X$ is isomorphic to a certain compactified Jacobian of $X$, modulo its automorphism group
(see Fact~\ref{Fact: Comparison}). 

The main result of this paper describes the local geometry of both a compactified Jacobian $\bar{J}(X)$ of a nodal curve $X$, and  of  the universal compactified Jacobian $\pdbst$ at a point corresponding
to   an automorphism-free stable curve.

\begin{theoremalpha} \label{Thm: MainThmA}
 Let $X$ be a nodal curve of arithmetic genus $g(X)$,  let $I$ be a rank $1$, torsion-free sheaf on $X$, and let $\Sigma$ be the set of nodes
where $I$ fails to be locally free.
Set $\Gamma=\Gamma_X(\Sigma)$ to be the dual graph of any curve obtained from $X$ by \emph{smoothing} the nodes \emph{not} in $\Sigma$.  Fix an arbitrary orientation on $
\Gamma$, and
denote by $V(\Gamma)$, $E(\Gamma)$, and  $s,t:E(\Gamma)\to V(\Gamma)$ the vertices, edges and source and target maps respectively.  Set $b_1(\Gamma)=\#E(\Gamma)-
\#V(\Gamma)+1$.
Let
$$
T_\Gamma:=\prod_{v\in V(\Gamma)}\mathbb G_m, \ \ \ \widehat {A(\Gamma)} := \frac{k[[X_e,Y_e:e\in E(\Gamma)]]}{(X_eY_e: e\in E(\Gamma))} \ \ \text { and } \ \ \widehat
{B(\Gamma)} := \frac{k[[X_e,Y_e,T_e:e\in E(\Gamma)]]}{(X_eY_e-T_e: e\in E(\Gamma))}.
$$
Define an action of the  torus $T_\Gamma$ on $\widehat{A(\Gamma)}$ and $\widehat{B(\Gamma)}$  by the rule that $\lambda=(\lambda_v)_{v\in V(\Gamma)}\in T_\Gamma$ acts as
$$
\lambda \cdot X_e=\lambda_{s(e)}X_e\lambda_{t(e)}^{-1}, \ \ \  \lambda \cdot Y_e=\lambda_{t(e)}Y_e\lambda_{s(e)}^{-1}  \ \ \text{ and } \ \ \lambda \cdot T_e=T_e.
$$
Define complete local rings
$$
R_I:=\widehat{A(\Gamma)}[[W_1,\ldots,W_{g(X)-b_1(\Gamma)}]] \ \ \text{ and }  \ \ R_{(X,I)}=\widehat{B(\Gamma)}[[W_1,\ldots,W_{4g-3-b_1(\Gamma)-\# E(\Gamma)}]],
$$
with actions of $T_\Gamma$ induced by the actions on $\widehat{A(\Gamma)}$ and $\widehat{B(\Gamma)}$, and the trivial action on the remaining generators.

\begin{enumerate}[(i)]

	\item \label{Thm: MainThmA1} Suppose $\bar{J}(X)$ is  a compactified Jacobian of $X$.  If $[I]\in \bar{J}(X)$ with $I$ polystable, then there is an isomorphism
		\begin{displaymath}
			\widehat{\calO}_{\bar{J}(X), [I]} \cong R_I^{T_\Gamma}
		\end{displaymath}
		between the completed local ring of $\bar{J}(X)$ at $[I]$ and the $T_{\Gamma}$-invariant subring of $R_{I}$.

	\item \label{Thm: MainThmA2} If $X$ is a stable curve with trivial automorphism group and $[(X,I)]\in \pdbst$ with $I$ polystable,  
	 then there is an isomorphism
		\begin{displaymath}
			\widehat{\calO}_{\pdbst, [(X,I)]} \cong R_{(X,I)}^{T_{\Gamma}}
		\end{displaymath}
		between the completed  local ring of $\pdbst $ at $[(X,I)]$ and the $T_{\Gamma}$-invariant  subring of $R_{(X,I)}$.
	\end{enumerate}
\end{theoremalpha}

Theorem~\ref{Thm: MainThmA} is a consequence of Theorems~\ref{Thm: MainActionThm} and \ref{Thm: MainLocStr} (see also Remarks~\ref{Rem: TTorus}, \ref{remmtpf}).  We discuss the proof in more detail below.
The rings $\widehat {A(\Gamma)}$ appearing above are further studied in \cite{local2}.  In the notation of that paper, $\widehat {A(\Gamma)}$ is the completion of the ring
$A(\Gamma)$   defined in \cite[Theorem A]{local2}, and the action of $T_\Gamma$ in both papers is the same.  It is shown in \cite[Theorem A]{local2} that the invariant subring
$A(\Gamma)^{T_\Gamma}$ is isomorphic to the cographic ring $R(\Gamma)$ defined in \cite[Definition 1.4]{local2}.
In particular, the completed local ring of the compactified Jacobian is
 isomorphic to a power series ring over a completion of the cographic toric face ring $R(\Gamma)$.
A number of geometric properties of cographic rings are established in \cite{local2}, and some  consequences for compactified Jacobians are discussed in Theorem~\ref{Thm:
MainThmB} below.
The geometric and combinatorial properties of the rings $\widehat {B(\Gamma)}^{T_\Gamma}$ will be described in more detail by the authors  in \cite{CMKV3}.

\begin{theoremalpha} \label{Thm: MainThmB}
Let $\bar{J}(X)$ be a compactified Jacobian of a nodal curve $X$.
\begin{enumerate}[(i)]
\item \label{Thm: MainThmB1} $\bar{J}(X)$ has Gorenstein, semi log-canonical (slc) singularities. In particular,  $\bar{J}(X)$ is seminormal.
\item \label{Thm: MainThmB2} Let $[I]\in \bar{J}(X)$ with $I$ polystable. Then $[I]$ lies in the smooth locus of
$\bar{J}(X)$ if and only if $I$ fails to be locally free only at  separating nodes of the dual graph of $X$.
\end{enumerate}
\end{theoremalpha}

The proof is given at the end of \S\ref{Sec: Luna}.
In \cite{local2} it is shown that a number of further properties of cographic rings can be determined from elementary combinatorics of the graph $\Gamma=\Gamma_X(\Sigma)$
introduced in Theorem~\ref{Thm:  MainThmA}.   For instance, that paper provides combinatorial formulas giving the embedding
dimension and the multiplicity of  $\widehat{\calO}_{\bar{J}(X), x}$, as well as a description of the irreducible components
and the normalization of this ring.  The reader is directed to  \S\ref{secexam} and \cite{local2} for more details.
We also point out that it is well-known that the completed local ring of $\bar{J}(X)$ at a \emph{stable} sheaf is isomorphic to a completed product of nodes and smooth factors.  Using Theorem~\ref{Thm:
MainThmA} and the results of \cite{local2} one can construct examples of compactified Jacobians whose structure at a strictly semi-stable point is more complicated (see esp.~\S~\ref{sec2ir}).

We prove the theorems using  deformation theory.
The basic strategy is to show that the local structure of a compactified Jacobian is given by the subring of the mini-versal deformation ring that consists of elements invariant under an action of the automorphism group.  
Let us sketch the proof for a compactified  Jacobian $\bar{J}(X)$  of a nodal curve $X$ (the case of the  universal compactified Jacobian $\pdbst$ is similar).
To begin with, there is a well-known explicit description of the miniversal deformation ring $R_I$  of a rank $1$, torsion-free sheaf $I$ on a nodal curve $X$ (see Corollary~\ref{Cor: RingDesc}), and we use that description to define an  explicit action of  $\operatorname{Aut}(I)$ on $R_I$  (see Theorems~\ref{Thm: MainActionThm}).  
We prove the main result by showing that, when $[I]\in \bar{J}(X)$ with $I$ polystable, the ring of invariants  $R_I^{\operatorname{Aut}(I)}$ is isomorphic to the completed local ring of $\bar{J}(X)$ at $[I]$.

In order to establish this last point, we use the GIT construction of $\bar{J}(X)$ together with the Luna Slice Theorem and a theorem of Rim.
Recall that the compactified Jacobian is constructed as a GIT quotient of a suitable Quot scheme $\operatorname{Quot}(\calO_{X}^{\oplus r})$ by the action of
 $\SL_r$ (see Corollary \ref{C:GITpres}).  We check that the complete local ring $R_x$  of a slice (which exists by Luna Slice Theorem) 
at a polystable point $x=[\calO_X^{\oplus r}\twoheadrightarrow I]\in \operatorname{Quot}(\calO_{X}^{\oplus r})$ is a miniversal deformation ring for $I$ (Lemma~\ref{Lemma: SliceToRing}).  Thus $R_x$  is
(non-canonically) isomorphic to $R_I$. By definition, the stabilizer $G_x$ of $x$ (which is described in Lemma \ref{Lemma: StabToAut}) acts on the ring   $R_x$, and it follows from the definition of a slice that the invariant ring  $R_x^{G_x}$ is isomorphic to the complete local ring of the GIT quotient at the image of the point $x$.  We complete the proof by using a  theorem of Rim (Fact~\ref{Fact: Rim}) to identify the action of  $G_x$ on $R_x$ to our explicit  action of  $\operatorname{Aut}(I)$ on $R_I$, completing the proof (see Theorems.~\ref{Thm: MainActionThm} and \ref{Thm: MainLocStr}).

Theorem~\ref{Thm: MainThmB} is one consequence of Theorem~\ref{Thm: MainThmA}.  Other consequences will be found in the upcoming article \cite{CMKV3}.  There the authors will use Theorem~\ref{Thm: MainThmA} to describe the singularities of  $\pdbst$. More precisely, they will prove that $\pdbst$ has canonical singularities provided ${\rm char}(k)=0$.  When $X$ does not admit a non-trivial automorphism, the authors will prove this result by using the explicit description of the completed local ring in Theorem~\ref{Thm: MainThmA}, and in general, they will reduce the proof to a similar argument using a generalization of the Reid--Tai--Shepherd-Barron criterion for toric singularities. 
The results in \cite{CMKV3} will extend the work of Bini, Fontanari and the third author \cite{BFV}, where it is shown  that $\pdbst$ has canonical singularities when  $\gcd(d+1-g, 2g-2)=1$,  a condition  equivalent to the condition that $\pdbst$ has finite quotient singularities.  Under the same assumption on $d$ and $g$, the same authors  computed the  Kodaira dimension and the Itaka fibration of $\pdbst$ (\cite[Thm.~1.2]{BFV}), and in  \cite{CMKV3}, the present authors will extend that computation to all $d,g$.

 The authors also hope to use the results of this paper to study the singularities of the theta divisor of a nodal curve.  The theta divisor is an ample effective Cartier divisor on the canonical compactified Jacobian of degree $g-1$, parametrizing sheaves with a non-trivial section (see \cite{alex}, \cite{ctheta}, \cite{CV2}). The case of integral nodal curves has been studied by the first two authors in \cite{CMK}, where an analogue of the Riemann singularity theorem is proved.  The authors are currently investigating how to extend the Riemann singularity theorem to non-integral nodal curves, based upon the explicit local description of the compactified Jacobian obtained in this paper.

This paper suggests two technical questions for future study.  In Theorem~\ref{Thm: MainLocStr}\eqref{Thm: MainLocStr2} the curve $X$ is assumed to be automorphism-free.  It is particularly difficult to described the local structure of $\pdbst$ when $X$ admits an automorphism of order equal to $p$, the characteristic of $k$.  When $X$ admits such an automorphism, $\Aut(X, I)$ is reductive but not linearly reductive.  Linear reductivity is crucial in two places: in the proof of Theorem~\ref{Thm: MainLocStr}, which uses the Luna Slice Theorem, and Theorem~\ref{Thm: MainActionThm}, which uses a result of Rim.  It would be interesting to know if suitable generalizations of Rim's Theorem and the Luna Slice Theorem hold for reductive groups such as $\Aut(X,I)$.  We discuss these issues after the proofs of the two theorems.

Positive characteristic issues also appear in Fact~\ref{Fact: Comparison}, which relates the fibers of $\pdbst \to \Mgbar$ to compactified Jacobians.  That result is only stated in characteristic $0$, and it would be interesting to know if the result remains valid in positive characteristic.  This is discussed in greater detail immediately after the proof of the fact.

There are approaches to describing the local structure of a compactified Jacobian different from the approach taken here.  Alexeev has proven in \cite[Thm.~5.1]{alex} that  compactified Jacobians are
stable  semi-abelic varieties in the sense of \cite{alex1}, and consequently can be described using Mumford's construction \cite{Mum} of degenerations of abelian varieties.  In Mumford's approach (that has been further developed in \cite{nakamura}, \cite{Nam1}, \cite{AN99}, \cite{alex1}), one compactifies a semi-abelian variety by
first forming the projectivization of a (non-finitely generated) graded algebra and then quotienting out by a lattice.  This procedure provides direct access to the local structure of the
compactification, and thus Alexeev's work provides another approach to studying the local structure of compactified Jacobians.  It would be  interesting to compare the descriptions
arising from this approach to the descriptions given in this paper, but we do not pursue this topic here.

The results of  Theorem~\ref{Thm: MainThmB} are related to some results in the literature.  Specifically, it was known that $\bar{J}(X)$ is seminormal \cite[Thm.~5.1]{alex} and Gorenstein
\cite[Lemma~4.1]{AN99}.   In personal correspondence, Alexeev has explained to the  authors that the
techniques of those papers can also be used establish the fact that $\bar{J}(X)$  is semi-log canonical.
The description of the smooth locus of $\bar{J}(X)$ is certainly well-known to the experts (see e.g.~\cite[Thm.~6.1(3)]{caporaso}, \cite[Thm.~7.9(iii)]{cner}, \cite[Fact~4.1.5(iv)]{ctheta},
\cite[Fact~1.19(ii)]{MV}); however, it seems that a proof has not appeared in print.

This paper is organized as follows.  We review the definition and the GIT construction of (universal) compactified Jacobians in \S\ref{Sec: Prelim}
with the goal of collecting the facts needed to prove Theorems~\ref{Thm: MainThmA} and \ref{Thm: MainThmB}.
  The proofs of the main theorems begin in \S\ref{Sec: DefThy}, where we develop the deformation theory needed to compute
deformation rings parameterizing deformations of a rank $1$, torsion-free sheaf.  These rings admit
natural actions of automorphism groups, which are described in the next two sections.
The structure of the automorphism groups is studied in \S\ref{Sec: AutGrps}, and then those results are
used in \S\ref{Sec: ActionsOnRings} to compute group actions.  Finally, in \S\ref{Sec: Luna} we prove the main results of this paper by using the Luna Slice Theorem to relate the local
structure of a compactified Jacobian to a deformation ring.  In \S\ref{secexam} we describe some examples using results of \cite{local2}.

\subsection*{Acknowledgements}
We would like to thank Robert Lazarfeld for helpful expository suggestions.  This work began when the authors were visiting the MSRI, in Berkeley, for the special semester in
Algebraic Geometry in the spring of 2009; we would like
to thank the organizers of the program as well as the institute for the excellent working conditions and the stimulating atmosphere.

\subsection*{Conventions}
\begin{convention}
 	$k$ will denote an algebraically closed field (of arbitrary characteristic). All schemes are $k$-schemes, and all morphisms are implicitly assumed to respect the $k$-structure.
\end{convention}

\begin{convention}
	A \textbf{curve} is a connected, complete, reduced  scheme (over $k$) of pure dimension $1$. We denote by $\omega_X$ the \emph{dualizing sheaf} of $X$. The \emph{genus} $g(X)$ of a curve $X$ is $g(X) := h^1(X,\mathcal O_X)$.
\end{convention}

\begin{convention}
	A \textbf{subcurve} $Y$ of a curve $X$ is a  closed $k$-scheme $Y \hookrightarrow X$ that is reduced  and of pure dimension $1$ (but possibly disconnected). A subcurve $Y\subseteq X$ is said to be proper if it non empty and different from $X$.
Given a subcurve $Y$, the \emph{complementary subcurve} $Y^c$ is defined to be $\ov{X\setminus Y}$, or, in other words, $Y^c$ is the subcurve which is the union of all the irreducible components of $X$ that are not contained in $Y$.
\end{convention}

\begin{convention}
	A \textbf{family of curves} is a proper, flat morphism $X \to T$ whose geometric fibers are curves.
\end{convention}

\begin{convention}
	A \textbf{family of coherent sheaves} on a family of curves $X \to T$ is a $\calO_{T}$-flat, finitely presented $\calO_{X}$-module $I$.
\end{convention}

\begin{convention}
	A coherent sheaf $I$ on a curve $X$ is said to be:
 \begin{enumerate}[(i)]
\item  of \textbf{rank $1$} if $I$ has rank $1$ at every generic point;
\item \textbf{pure}  if for every non-zero
subsheaf $J\subseteq I$ the dimension of the support of $J$ is equal to the dimension of the support of $I$;
\item \textbf{torsion-free} if it is pure and the support of $I$ is $X$.
\end{enumerate}
The \textbf{degree} of a torsion-free, rank $1$ sheaf $I$ on a curve $X$ is defined to be
$\deg I:=\chi(I)-\chi(\calO_X)$, where $\chi$ denote the Euler characteristic.
\end{convention}

\section{Preliminaries on GIT and compactified Jacobians}  \label{Sec: Prelim}

Here we review the definition and the construction of compactified Jacobians of a fixed nodal curve as well as of the universal compactified Jacobian,  with the goal of collecting the results needed to prove Theorem~\ref{Thm: MainLocStr}.

\subsection{Geometric Invariant Theory} \label{Subsec: GIT}
 
 The (universal) compactified Jacobians are coarse moduli spaces of sheaves constructed  using Geometric Invariant Theory (GIT) and, in the proof of our results,  we will need to make use of their construction, and not just the  fact of their existence. Therefore, we will quickly review some background from GIT.

Recall that GIT is a tool for constructing a quotient of a quasi-projective variety $Q$ by the action of a reductive group $G$.  Given an auxiliary ample line bundle $\calO(1)$
together with a lift of the action of $G$ on $Q$ to an action on $\calO(1)$ (i.e.~a linearization), there is distinguished open subscheme $Q^{\text{ss}}$ of $Q$ that
consists of points that are semi-stable with respect to the
linearized action. The significance of $Q^{\text{ss}}$ is that it admits a \textbf{categorical quotient} that we define to be the GIT quotient of $Q$,
written $Q/\!\!/G$.   That is, there exists a pair $(Q^{\text{ss}}/G, \pi)$ consisting of a quasi-projective variety  $Q^{\text{ss}}/G$  and a $G$-invariant map
$$\pi \colon Q^{\text{ss}} \to Q^{\text{ss}}/G$$ 
with the property that	 $\pi$ is universal among all $G$-invariant maps out of $Q^{\text{ss}}$.
	When the characteristic of $k$ is $0$, the pair $(Q^{\text{ss}}, \pi)$ is actually a \textbf{universal categorical quotient}, i.e. for any morphism $T\to Q^{\text{ss}}/G$ the base change morphism
$\pi_T \colon Q^{\text{ss}}\times_{Q^{\text{ss}}/G} T  \to T$ is again a categorical quotient.

The local structure of $Q/\!\!/G$ is described by the Luna Slice Theorem, which compares $Q/\!\!/G$ to the quotient of a certain model $G$-space.
For the remainder of \S\ref{Subsec: GIT}, we assume that $Q$ is affine, so $Q = Q^{\text{ss}}$ and $Q /\!\!/ G$ is the categorical quotient   (\cite[Thm.  1, p.~27]{GIT}).
The model scheme is $G \times_{H} V$, whose definition we now review.  Suppose $H \subset G$ is
a reductive subgroup and $V$ a scheme with a left $H$-action.  The product $G \times V$ carries an
$H$-action defined by
\begin{displaymath}
	h \cdot (g,x) := (g h^{-1}, h \cdot x),
\end{displaymath}
and we write $G \times_{H} V$ for the categorical quotient.  This quotient admits a left
action of $G$ defined by the translation action on the first factor.
The two projections out of $G \times V$ induce morphisms
\begin{equation*}
	p \colon G \times_{H} V \to V/H  \hspace{0.5cm}  \text{ and }   \hspace{0.5cm} q \colon G \times_{H} V \to G/H.
\end{equation*}
The first map is $G$-invariant and realizes $V/H$ as the quotient of $G \times_{H} V$ by $G$.
The map $q$ is equivariant and can often be described as a contraction onto an orbit.  To be precise,
suppose we are given an element $v_0 \in V$ fixed by $H$.  One may verify that the image of
$(e, v_0) \in G \times V$ in $G \times_{H} V$ has stabilizer $H$, and the associated orbit map defines
a section of $q$.

The Luna Slice Theorem provides sufficient conditions for $Q/\!\!/G$ to be \'{e}tale locally isomorphic to  $V/H$ for a suitable $H$ and $V$.
More precisely,   let $x$ be a point of $Q$ with stabilizer $H$.  Given any affine, locally closed
subscheme $V \subset Q$ that contains $x$ and is stabilized by $H$ (i.e.~$H \cdot V \subset V$), the action map induces a $G$-equivariant morphism
$G \times_{H} V \to Q$.
We say that $V$ is a \textbf{slice} at $x$ if the following conditions are satisfied:
\begin{enumerate}
	\item the morphism $G \times_{H} V \to Q$ is \'{e}tale;
	\item the image of $G \times_{H} V \to Q$ is an open affine $U \subset Q$ that is $\pi$-saturated (i.e.~for each $u\in U$, $\pi^{-1}(\pi(u))\subseteq U$);

	\item the induced morphism $(G \times_{H} V) / G \to U / G$ is \'{e}tale;
	\item the induced morphism $G \times_{H} V \to U \times_{U/G} V/H$ is an isomorphism.
\end{enumerate}
Note in particular that condition (3) together with the observation above on the map $p$ implies that there is an \'etale morphism
\begin{equation}\label{E:sliceloc}
V/H\stackrel{\text{\'{e}t}}{\longrightarrow} Q/\!\!/G.
\end{equation}

The original Luna Slice Theorem \cite[p.~97]{lun} states that in characteristic zero a slice exists provided that $x$ is (GIT-)\textbf{polystable}, i.e.~the orbit of $x$ is closed.
When $x$ has a closed orbit, Matsushima's criterion implies that the stabilizer $H$ is reductive (\cite{Mat} for $k=\bbC$; \cite{Ric} for $k$ arbitrary).
Bardsley and Richardson have extended the Luna Slice Theorem to arbitrary characteristic.  With no assumptions on $\operatorname{char}(k)$, they prove
that a slice exists provided the orbit of $x$ is closed and the stabilizer $H$ is  reduced and linearly reductive (\cite[Prop.~7.6]{br}; the condition in \emph{loc.~cit.}~that the orbit is separable is
equivalent to our condition that $H$ is reduced).

\subsection{Compactified Jacobians of nodal curves}\label{SS:compJac}

In this subsection, we review the definition of compactified Jacobians of a nodal curve $X$.

Any (known) compactified Jacobian of $X$ parametrizes torsion-free, rank-$1$ sheaves on $X$ that are semistable with respect to some polarization. There are several ways to define a polarization and the associated semistability condition on $X$. The most general definition is stated in terms of numerical polarizations (following \cite[Sec. 2.4]{MV}): all the other know semistability conditions are special case of 
this one, see \cite[Sec. 2]{MV}.

\begin{defi}\label{D:numpol}
Let $X$ be a nodal curve with irreducible components $\{X_1,\ldots,X_{\gamma}\}$. A \textbf{numerical polarization} on $X$ is a $\gamma$-tuple of rational numbers
 $\phi=\{\phi_i=\phi_{X_i}\}_{i=1}^{\gamma}\in \bbQ^{\gamma}$, one for each irreducible component of $X$, such that $|\phi|:=\sum_i \phi_i\in \bbZ$.
For any subcurve $Y$ of $X$, we set $\phi_Y=\sum_{X_i\subseteq Y} \phi_{X_i}\in \bbQ$. For any subcurve  $Y\subset X$ such that $\phi_Y-\frac{\#(Y \cap Y^{c})}{2}\in \bbZ$, we define a numerical polarization $\phi^Y$ on $Y$ by setting
$$(\phi^{Y})_{Y_i}:=\phi_{Y_i} - \frac{\#(Y_i \cap Y^{c})}{2} \hspace{0.5cm} \text{ for any irreducible component } Y_i \text{ of } Y. $$
\end{defi}

The semistability of a torsion-free, rank $1$ sheaf on $X$ with respect to a numerical polarization $\phi$ is defined as it follows.

\begin{defi}\label{D:phi-ss}
Let $X$ be a nodal curve and let $\phi=(\phi_i)$ be a numerical polarization on $X$.
\begin{enumerate}[(i)]
\item \label{D:phi-ss1} A torsion-free, rank $1$ sheaf $I$ on $X$ is said to be \textbf{$\phi$-semistable} if $\deg I=|\phi|$ and
\begin{equation} \label{Eqn: PhiStable}
\deg(I_{Y}) \geq \phi_{Y} - \frac{\#(Y \cap Y^{c})}{2},
\end{equation}
for any subcurve $Y\subseteq X$, where $I_{Y}$ denotes the biggest torsion-free quotient of the restriction $I_{|Y}$ of $I$ to $Y$.
\item \label{D:phi-ss2} A torsion-free, rank $1$ sheaf $I$ on $X$ is said to be \textbf{$\phi$-stable} if it is $\phi$-semistable and the inequality \eqref{Eqn: PhiStable} is strict for every proper subcurve $\emptyset\neq Y\subsetneq X$.
\item \label{D:phi-ss3} A torsion-free, rank $1$ sheaf $I$ on $X$ is said to be \textbf{$\phi$-polystable} if it is $\phi$-semistable and for all subcurves $Y$ for which  inequality \eqref{Eqn: PhiStable} is an equality then it holds that $I=I_{Y}\oplus I_{Y^c}$.
\end{enumerate}
\end{defi}

The $\phi$-semistability condition is stated as a lower bound on the multidegree of $I$. However, this implies also an upper bound on the multidegree of $I$, as we observe in the following Remark.

\begin{remark}\label{R:phi-ss}
Let $\phi$ be a numerical polarization on a nodal curve $X$ and let $I$ be a torsion-free rank $1$ sheaf on $X$ of degree $\deg I=|\phi|$. Then
$I$ is $\phi$-semistable if and only if
\begin{equation} \label{Eqn: PrePhiStable}
	\deg(I_{Y}) \leq \phi_{Y} +\frac{\#(Y \cap Y^{c})}{2} - \#\{p \in Y \cap Y^{c} : \text{ $I$ fails to be locally free at $p$} \}.
\end{equation}
holds for every subcurve $Y$ of $X$. Indeed, this follows from the two easily checked formulas
$$ \begin{sis}
&\deg(I_{Y}) + \deg(I_{Y^{c}})= \deg I -  \#\{p \in Y \cap Y^{c} : \text{ $I$ fails to be locally free at $p$} \}, \\
& \phi_Y+\phi_{Y^c}=|\phi|.
\end{sis} $$
\end{remark}

The condition of being polystable is better understood in terms of the \emph{Jordan-H\"older}  filtration. Recall that, given a $\phi$-semistable sheaf $I$, a Jordan-H\"older filtration of $I$ is a filtration
$$0=I_{q+1}\subsetneq I_q\subsetneq \ldots \subsetneq I_1\subsetneq I_0=I,$$
with the following properties:
\begin{enumerate}\label{E:JH}
\item \label{E:JH1}  The sheaf $I_k$ is a rank $1$, torsion-free sheaf supported on a subcurve $Z_k\subset X$ and $\phi^{Z_k}$-semistable, for every $0\leq k\leq q$.
\item \label{E:JH2} The quotient sheaf $I_k/I_{k+1}$  is a rank $1$, torsion-free sheaf supported on the subcurve $Y_k=Z_k\setminus Z_{k+1}$ and $\phi^{Y_k}$-stable, for every $0\leq k\leq q$.
\end{enumerate}
Jordan-H\"older filtrations exist for every $\phi$-semistable sheaf $I$ but they are not unique; however, the graded sheaf 
$$\Gr(I):=I_0/I_1\oplus \ldots \oplus I_q/I_{q+1}$$
depends only on $I$ (see e.g. \cite[\S 1.3]{esteves01}, \cite[\S 2.5]{MV}). 
Then it is easy to check that $I$ is polystable if and only if $I\cong \Gr(I)$. Moreover, we say that two $\phi$-semistable sheaves $I$ and $I'$ are \emph{S-equivalent} (or Jordan-H\"older equivalent)
if $\Gr(I)\cong \Gr(I')$. Therefore, every $\phi$-semistable sheaf is S-equivalent to a unique $\phi$-polystable sheaf, namely $\Gr(I)$.

With the above definitions, we can now introduce the \textbf{$\phi$-compactified Jacobian functor}
$$\bar{J}^{\sharp}_{\phi}(X) \colon \text{$k$-Sch} \to \Sets$$
which associates to a $k$-scheme $T$ the set of families of coherent sheaves on $X\times_k T\to T$ that are fiberwise rank $1$, torsion-free and $\phi$-semi-stable.

\begin{fact}[Oda-Seshadri, Seshadri]\label{F:OSJac}
There exists a projective variety $\bar{J}_{\phi}(X)$, called the \textbf{$\phi$-compa\-ctified Jacobian} or simply \textbf{compactified Jacobian},
that co-represents the functor $\bar{J}_{\phi}^{\sharp}(X)$. Moreover, two sheaves $I, I'\in \bar{J}_{\phi}^{\sharp}(X)(k)$ define the same $k$-point $[I]=[I']\in \bar{J}_{\phi}(X)$ if and only if $I$ and $I'$ are S-equivalent. In particular, every $k$-point of $\bar{J}_{\phi}(X)$ is equal to $[I]$, for a unique $\phi$-polystable sheaf $I$.  
\end{fact}
Recall that the fact that  $\bar{J}_{\phi}(X)$ co-represents $\bar{J}_{\phi}^{\sharp}(X)$ means that there exist a natural transformation of functors $\pi:\bar{J}_{\phi}^{\sharp}(X)\to \Hom(-, \bar{J}_{\phi}(X))$ which is universal with respect to natural transformations from $\bar{J}_{\phi}^{\sharp}(X)$ to the functor of points of $k$-schemes. Given a point $I \in \bar{J}_{\phi}^{\sharp}(X)(k)$, we set 
$[I]:=\pi(I)\in \Hom(\Spec k,  \bar{J}_{\phi}(X))= \bar{J}_{\phi}(X)(k)$.

\begin{proof}
This is proved by Oda-Seshadri  in \cite[Thms~11.4 and 12.14]{OS} and Seshadri in \cite[Thm. 15]{Ses}.
Note that in \emph{loc.~cit.}~the authors use two different definitions of $\phi$-(semi)stability, which are however equivalent to our definition as discussed in  \cite[\S 2.1]{alex} and \cite[Sec. 2]{MV}.
\end{proof}

\begin{remark}\label{R:gen-pola}
If the numerical polarization $\phi$ is such that $\phi_{Y} - \frac{\#(Y \cap Y^{c})}{2}\not\in \bbZ$ for every proper subcurve $\emptyset\neq Y\subsetneq X$ (in which case we say that $\phi$ is \emph{general}), 
then it follows from Definition \ref{D:phi-ss} that every $\phi$-semistable sheaf is also $\phi$-stable. Hence, $\bar{J}_{\phi}(X)$ is a fine moduli space  parametrizing $\phi$-stable sheaves and we say that 
$\bar{J}_{\phi}(X)$ is a \emph{fine compactified Jacobian}.  Such compactified Jacobians are studied in \cite{MV} and in \cite{MRV}.
\end{remark}

We now compare the $\phi$-semistability condition introduced above with the (more familiar) notion of slope semistability. Recall that, given a nodal curve $X$ and a polarization $L$ on $X$, i.e. an ample line bundle on $X$, the \textbf{slope} $\mu_{L}(I)$ of a coherent sheaf $I$ with respect to $L$ is defined to be $a/r$, where $a$ and $r$ are coefficients of the Hilbert polynomial $P_L(I,t) := r \cdot t + a$ of $I$ with respect to $L$.

\begin{defi}\label{D:slopess}
Let $X$ be a nodal curve and $L$ be a polarization on $X$.
\begin{enumerate}[(i)]
\item The sheaf $I$ is said to be \textbf{slope semistable} (resp. \textbf{slope stable}) with respect to the polarization $L$ if it is pure and satisfies $\mu_L(I) \le \mu_L(J)$ (resp. $<$) for all pure non-trivial quotients $I \twoheadrightarrow J$ with $1$-dimensional support $\operatorname{Supp}(J)$.
\item  The sheaf $I$ is said to be \textbf{slope polystable} if it is slope semistable and  isomorphic to a direct sum of slope stable sheaves.
\end{enumerate}
\end{defi}

With the above definitions, we can now introduce the \textbf{Simpson Jacobian functor}
of degree $d$ to be the functor
\begin{displaymath}
	\bar{J}_{L,d}^{\sharp}(X) \colon \text{$S$-Sch.} \to \Sets
\end{displaymath}
which sends a $k$-scheme $T$ into the set of families of coherent sheaves on $X\times_k T\to T$ that are fiberwise rank $1$, torsion-free of degree $d$ and slope semistable with respect to the polarization $L$.

\begin{fact}[Simpson]\label{F:SimpJac}
There exists a projective scheme $\bar{J}_{L,d}(X)$, called the \textbf{Simpson compactified Jacobian},
that co-represents the functor $\bar{J}_{L,d}^{\sharp}(X)$.
\end{fact}
\begin{proof}
This follows easily from the work of Simpson \cite{simpson}. However, for later use, we need to review the explicit GIT construction.
Consider the polynomial
\begin{equation*}
	P_{d}(t) := \deg(L)\cdot  t + d + 1 - g(X),
\end{equation*}
which is the Hilbert polynomial, with respect to the polarization $\calO_X(1):=L$, of any rank $1$, torsion-free sheaf of degree $d$. Let $\operatorname{M}^{\sharp}(\calO_{X}, P_d) \colon \text{$k$-Sch.} \to \Sets $ to be the functor which associates to a $k$-scheme $T$ the set of isomorphism classes of coherent sheaves on $X\times_k T$, flat over $T$, that are fiberwise slope semistable with Hilbert polynomial $P_d(t)$ with respect to $L$. Note that $\bar{J}_{d,L}^{\sharp}(X)$ is the subfunctor of $\operatorname{M}^{\sharp}(\calO_{X}, P_d)$ parametrizing families of torsion-free, rank $1$ sheaves.

Following Simpson's construction \cite{simpson} (note that Simpson \cite{simpson} works over $k=\bbC$ but his construction has been extended over an arbitrary base field $k$ by Maruyama \cite{maruyama} and Langer \cite{langer}), choose $b$ sufficiently large and set $r := P_d(b)=d\cdot b+1-g(X)$. Consider the Quot scheme $\Quot(\calO_{X}(-b)^{\oplus r}, P_d(t))$ parametrizing quotients $\calO_{X}(-b)^{\oplus r}\twoheadrightarrow I$, where $I$ is a coherent sheaf on $X$ of Hilbert polynomial $P_d(t)$ with respect to $\calO(1)$ (see \cite{gro} and \cite{nitsure} for details on Quot schemes).

There is  a closed and open subscheme (\cite[p.~66]{simpson})
$Q^\circ\subseteq \Quot(\calO_{X}(-b)^{\oplus r}, P_d)$ that parameterizes quotient maps
\begin{displaymath}
	q \colon \calO_{X}(-b)^{\oplus r} \twoheadrightarrow I
\end{displaymath}
satisfying the following additional conditions:
\begin{itemize}
	\item $H^{1}(X, I(b))=0$;
	\item $q \otimes 1 \colon H^{0}(X, \calO_{X}^{\oplus r}) \to H^{0}(X,I(b))$ is an isomorphism;
	\item $I(b)$ is generated by its global section.
\end{itemize}
The natural linearized action of $\operatorname{SL}_{r}$ on the Quot scheme $\Quot(\calO_{X}(-b)^{\oplus r}, P_d(t))$  restricts to a linearized action on $Q^\circ$ and the GIT stability for this action is naturally related to slope stability.
Specifically,  a point of the Quot scheme corresponding to  $q \colon \calO(-b)^{\oplus r}
\twoheadrightarrow I$ is GIT (resp. semi, poly)stable if and only if $I$ is (resp.
semi, poly)stable with respect to the polarization $L$ (see \cite[Cor. 1.20, Thm. 1.19, Pf. of Thm. 1.21]{simpson}).
Therefore, the projective GIT quotient $\operatorname{M}(\calO_{X}, P_d):=Q^\circ/\!\!/\operatorname{SL}_r=(Q^\circ)^{\rm ss}/SL_r$ naturally co-represents the functor $\operatorname{M}^{\sharp}(\calO_{X}, P_d)$.

Consider now the locus $Q^{\circ\circ}\subseteq Q^\circ$
parametrizing quotients $q \colon \calO_{X}(-b)^{\oplus r} \twoheadrightarrow I$ such that
\begin{itemize}
\item $I$ is a rank $1$, torsion-free sheaf on $X$.
\end{itemize}
This is a $\operatorname{SL}_{r}$-invariant subset that is closed and open in $Q^{\circ}$ by \cite[Lemma~8.1.1]{Pan}. Therefore, the image of $Q^{\circ\circ}$ in the GIT quotient $Q^\circ/\!\!/\operatorname{SL}_r$, which we set to be equal to $\bar{J}_{L, d}(X/S)$, must be closed-and-open in $\operatorname{M}(\calO_{X}, P_d)$ by \cite[p.~8, Remark~6]{GIT}.  By construction,
the projective scheme $\bar{J}_{L, d}(X/S)$ co-represents the functor  $\bar{J}_{L, d}^{\sharp}(X/S)$.
\end{proof}

Simpson compactified Jacobians are a special case of Oda-Seshadri $\phi$-compactified Jacobians.

\begin{fact}[Alexeev]\label{F:compJac}
Let $X$ be a nodal curve endowed with a polarization $L$ and fix $d\in \bbZ$.
Consider the numerical polarization $\phi$ such that
 \begin{equation}\label{E:phi-L}
 \phi_{X_i}:=\frac{\deg(L|_{X_i})}{\deg(L)}\left( d - \frac{\deg(\omega_{X})}{2}\right)
		+ \frac{\deg(\omega_{X}|_{X_i})}{2},
\end{equation}
for each irreducible component $X_i$ of $X$. Then a rank $1$, torsion-free sheaf $I$ of degree $d$ on $X$ is slope semistable (resp. stable, resp. polystable) with respect to $L$ if and only if it is $\phi$-semistable
(resp. $\phi$-stable, $\phi$-polystable).

In particular, we have that $\bar{J}_{L,d}^{\sharp}(X)=\bar{J}_{\phi}^{\sharp}(X)$, which implies
$\bar{J}_{L,d}(X)\cong \bar{J}_{\phi}(X)$.
\end{fact}
\begin{proof}
This is proved by Alexeev in \cite{alex}, where it shown that a torsion-free, rank $1$ sheaf $I$ of degree $d$ is slope (semi)stable (with respect to the polarization $L$) if and only if, for any subcurve $i \colon Y \hookrightarrow X$, we have that
$\mu_L(I) \le \mu_L(i_{*}(I_{Y}))$, where $I_Y$ is the biggest torsion-free quotient the restriction $i^*(I)=I_{|Y}$. 
By the definition of the slope $\mu_L$, we get 
\begin{equation} \label{E:slope1}
	\frac{\deg(I) - 1/2 \deg(\omega_{X})}{\deg(L)}=\mu_L(I) \le \mu_L(i_{*}(I_{Y}))=  \frac{\deg(I_{Y}) -1/2 \deg(\omega_{X}|_{Y}) + 1/2 \, \#(Y \cap Y^{c})}{\deg(L|_{Y})},
\end{equation}
where we used the formula $\omega_{X}|_{Y} = \omega_{Y}(Y\cap Y^c)$. 
Equation \eqref{E:slope1} can be rewritten as
\begin{equation} \label{E:slope2}
	\deg(I_{Y}) \geq   \frac{\deg(L|_{Y})}{\deg(L)}\left( \deg(I) - \frac{\deg(\omega_{X})}{2}\right)
		+ \frac{\deg(\omega_{X}|_{Y})}{2} -  \frac{\#(Y \cap Y^{c})}{2}=\phi_Y-  \frac{\#(Y \cap Y^{c})}{2},
\end{equation}
which says that $I$ is $\phi$-(semi)stable with respect to the numerical polarization defined by \eqref{E:phi-L}. 
The fact that slope polystability correspond to $\phi$-polystability follows easily from the above. 
\end{proof}

\begin{remark}\label{R:OS-Simp}
Let $X$ be a nodal curve of genus $g$ and consider the compactified Jacobians of $X$.
\begin{enumerate}[(i)]
\item \label{R:OS-Simp1} \emph{There are  $\phi$-compactified Jacobians  of degree $d$ that are not Simpson Jacobians of degree $d$.}

The most extreme case is $d=g-1$.  As it follows from \eqref{E:phi-L}, every Simpson compactified Jacobian of degree $g-1$ is isomorphic to the $\phi$-compactified Jacobian $\bar{J}_{\phi}(X)$ such that 
$\displaystyle \phi_{X_i}=\frac{\deg({\omega_X}_{|X_i})}{2}$ for every irreducible component $X_i$ of $X$ (a very special compactified Jacobian, called the canonical compactified Jacobian of degree $g-1$,  that was studied in detail in \cite[Sec. 3]{alex}, \cite{ctheta}, \cite{CV2}).
 However, there are many $\phi$-compactified Jacobians of degree $d=g-1$ (indeed, as many as in the other degrees).

Also in degree $d\neq g-1$, there are, in general, $\phi$-compactified Jacobians that are not Simpson compactified Jacobians. For example, let $X$ be the genus $2$ nodal curve
that consists of two rational components meeting in three nodes.  For a numerical polarization $\phi = (\phi_1, \phi_2)$ such that $|\phi|=\phi_1+\phi_2=0$, then one can compute that
$\bar{J}_{\phi}(X)$ has two irreducible components if $\phi_1, \phi_2 \in 1/2 + \bbZ$ and three irreducible components otherwise. On the other hand,  given an
ample line bundle $L$ with bidegree $(a,b)$, the associated $\phi$-parameter (see \eqref{E:phi-L}) is
\begin{displaymath}
	\phi = ( 1/2 - b/(a+b), 1/2 - a/(a+b)),
\end{displaymath}
and $\phi_1,\phi_2$ cannot belong to $1/2+\bbZ$ because $a, b>0$. Therefore, every Simpson compactified Jacobian of degree $0$ has three irreducible components.

\item \label{R:OS-Simp2} \emph{Every $\phi$-compactified Jacobian  of degree $d$ is isomorphic to a  Simpson Jacobian of degree $d'$, for some $d'\gg d$.}

 Indeed given a numerical polarization $\phi$, pick a line bundle $M$ of sufficiently small degree on each irreducible component $X_i$ of $X$ in such a way that
$$
	a_i:=\phi_{X_i} -\deg(M|_{X_i}) - \frac{\deg(\omega_{X}|_{X_i})}{2} > 0.
$$
Moreover, pick a sufficiently divisible natural number $e\in \bbN$ such that
$$
	b_i:=e\frac{a_i}{d+g-1}\in \bbZ \: \text{ for every } i.
$$
Finally, choose a line bundle $L$ of total degree $e$ such that $\deg(L_{|X_i})=b_i$
and observe that $L$ is ample since $\deg(L_{|X_i})=b_i>0$.
With the above choices, we get that
\begin{equation} \label{Eqn: LMToPhi}
	\psi_{X_i}:=\phi_{X_i} -\deg(M|_{X_i}) = \left(d- \frac{\deg(\omega_{X})}{2} \right) \frac{\deg(L|_{X_i})}{\deg(L)}+\frac{\deg(\omega_{X}|_{X_i})}{2}.
\end{equation}
Therefore, using Fact \ref{F:compJac}, we get the isomorphism
\begin{equation*}
\begin{aligned}
\bar{J}_{\phi}(X) & \stackrel{\cong}{\longrightarrow} \bar{J}_{\psi}(X)\cong \bar{J}_{L,d'}(X)\\
I & \mapsto I\otimes M^{-1},
\end{aligned}
\end{equation*}
where $d'=|\psi|=|\phi|-\deg M$.
\end{enumerate}
\end{remark}

We record in the following corollary a presentation of any compactified Jacobian of a nodal curve as a GIT quotient of an open subset of a suitable Quot scheme. Such a GIT description will be crucial in proving Theorem \ref{Thm: MainThmA}\eqref{Thm: MainThmA1}.

\begin{cor}[GIT presentation of compactified Jacobians]\label{C:GITpres}
Let $X$ be a nodal curve of genus $g$ and let $\bar J(X)$ be any compactified Jacobian of $X$.
There exists a Quot scheme $\Quot(\calO_X^{\oplus r},P_d(t))$, parametrizing quotients
$q:\calO_X^{\oplus r} \twoheadrightarrow I$ with Hilbert polynomial $P_d(t)=d\cdot t+1-g$ with respect to some ample line bundle $\calO_X(1)$, with an open and closed $\SL_r$-invariant subscheme $U \subseteq \Quot(\calO_X^{\oplus r},P_d(t))$ parameterizing the quotients $q \colon \calO_{X}^{\oplus r} \twoheadrightarrow I$ with the property that
\begin{enumerate}
	\item \label{C:GITpres1} $H^{1}(X, I)=0$,
	\item \label{C:GITpres2} $q \colon H^{0}(X,\calO_{X}^{\oplus r}) \to H^{0}(X, I)$ is an isomorphism,
	\item \label{C:GITpres2b} $I$ is generated by the global sections,
	\item \label{C:GITpres3} $I$ is a torsion-free, rank $1$  sheaf,
\end{enumerate}
in such a way that
$$\bar{J}(X)\cong U/\!\!/\operatorname{SL}_r=U^{\rm ss}/\SL_r,$$
where the GIT quotient on the right hand side is taken with respect to the natural linearized action of $\SL_r$.
\end{cor}
\begin{proof}
By Remark \ref{R:OS-Simp}\eqref{R:OS-Simp2}, it is enough to prove the Corollary for a Simpson compactified Jacobian $\bar{J}_{L,e}(X)$ (with $e\gg0$). In the proof of Fact \ref{F:SimpJac}, we have seen that $\bar{J}_{L,e}(X)$ admits a GIT description as $Q^{\circ\circ}/\!\!/\operatorname{SL}_r$, where $Q^{\circ\circ}$ is the open and closed subscheme of a suitable Quot scheme $\Quot(\calO_X(-b)^{\oplus r}, P_e(t))$, parametrizing the quotients
$q:\calO_X(-b)^{\oplus r}\twoheadrightarrow I$ such that
\begin{itemize}
\item $H^{1}(X, I(b))=0$;
\item $q \otimes 1 \colon H^{0}(X, \calO_{X}^{\oplus r}) \to H^{0}(X,I(b))$ is an isomorphism;
\item $I$ is a rank $1$, torsion-free sheaf on $X$.
\end{itemize}
The isomorphism
\begin{equation*}
\begin{aligned}
\Phi: \Quot(\calO_X(-b)^{\oplus r}, P_e(t)) & \stackrel{\cong}{\longrightarrow} \Quot(\calO_X^{\oplus r},P_{e+b\deg \calO_X(1)}(t)), \\
[q:\calO_X(-b)^{\oplus r}\twoheadrightarrow I] & \mapsto [q:\calO_X^{\oplus r}\twoheadrightarrow I(b)],
\end{aligned}
\end{equation*}
sends $Q^{\circ\circ}$ isomorphically onto the open subset $U\subseteq \Quot(\calO_X^{\oplus r},P_{e+b\deg \calO_X(1)}(t))$ parametrizing quotients $q:\calO_X^{\oplus r}\twoheadrightarrow I$ satisfying the three conditions \eqref{C:GITpres1}, \eqref{C:GITpres2}, \eqref{C:GITpres3} and moreover
$$\bar{J}_{L,e}(X)\cong Q^{\circ\circ}/\!\!/\operatorname{SL}_r \stackrel{\cong}{\longrightarrow}
U /\!\!/\operatorname{SL}_r.$$
\end{proof}

\subsection{The universal compactified Jacobian}\label{SS:univJac}

In this subsection, we review the definition and the construction of the universal degree $d$ compactified Jacobian $\bar J_{d,g}\to \ov{M}_g$ over the moduli space of stable curves $\ov{M}_g$ of genus $g\geq 2$.

This construction of $\bar J_{d,g}$ is originally due to Caporaso \cite{caporaso} in terms of balanced line bundles on quasi-stable curves. Later, Pandharipande \cite{Pan} re-interpreted $\bar J_{d,g}$ in terms of rank $1$, torsion-free semistable sheaves on stable curves.  We will focus on Pandharipande's later construction because this description  most naturally relates to the other
compactified Jacobians we discuss here. For a description of Caporaso's approach, we direct the interested reader to \cite{caporaso} and \cite[\S10]{Pan}.

Given integers $d$ and  $g \ge 2$, the \textbf{universal compactified Jacobian functor}
$$\pdbstFunc \colon \text{$k$-Sch.} \to \Sets$$
is defined to be the  functor sending a $k$-scheme $T$ to the set of isomorphism classes of families $\calX\to T$ of stable curves of genus $g$ together with a family of coherent sheaves 
which is fiberwise torsion-free, rank $1$ of degree $d$ and slope semistable with respect to the relative dualizing line bundle.

\begin{fact}[Pandharipande \cite{Pan}]\label{F:univJac}
The functor  $\pdbstFunc$ is co-representable by a projective scheme $\pdbst$, called the \textbf{universal compactified Jacobian}, which is endowed with a forgetful projective morphism $\Phi:\bar J_{d,g}\to \ov{M}_g$.
\end{fact}

\begin{proof}
This follows from the work of Pandharipande \cite{Pan}, where the projective scheme $\bar J_{d,g}$
is constructed via GIT.
Since we will need this GIT description in the proof of Theorem \ref{Thm: MainThmA}\eqref{Thm: MainThmA2}, we will now review the relevant GIT set-up.

To begin, we may assume $d$ is sufficiently large because tensoring with
the dualizing sheaf defines a canonical isomorphism between $\pdbstFunc$ and
$\overline{J}^{\sharp}_{d+2g-2,g}$. Thus, let
$d$ be large and fixed.  Set $N := 10 (2 g -2) -g$ and $e := 10(2g-2)$. Consider the polynomial $P(t)  := e \cdot t + d + 1-g$ and set $r := P(0)$.

Inside of the Hilbert scheme of degree $e$ curves in $\bbP^N$, we can consider the locally closed subscheme $H_{g}$
parameterizing non-degenerate, $10$-canonically embedded stable curves.
The product $H_{g} \times \bbP^{N}$ contains the universal $10$-canonically embedded curve $X_{g}$,  and associated to this family is the relative Quot scheme $\operatorname{Quot}(\calO_{X_g}^{\oplus r}, P(t))$, parametrizing quotients $q:\calO_{X_g}^{\oplus r}\twoheadrightarrow E$ such that $E$ is a coherent sheaf on $X_g$, flat over
$H_g$, with the property that on each fiber of $X_g\to H_g$ the Hilbert polynomial (with respect to the polarization given the embedding $X_g\hookrightarrow H_g\times \bbP^N$) of $E$ is equal to $P(t)$.

The product group $\operatorname{SL}_{r} \times \operatorname{SL}_{N+1}$ acts on this Quot scheme
by making $\operatorname{SL}_{r}$ act on $\calO_{X_g}^{\oplus r}$ by changing bases,
$\operatorname{SL}_{N+1}$ act on $\bbP^{N}$ by changing projective coordinates, and
then making $\operatorname{SL}_{r} \times \operatorname{SL}_{N+1}$ act on the Quot scheme by the product action.  The action of $\operatorname{SL}_{r}
\times \operatorname{SL}_{N+1}$ admits a natural linearization coming from the construction of the relative Quot scheme (see  \cite{nitsure}).

Inside $\operatorname{Quot}(\calO_{X_g}^{\oplus r}, P(t))$, there  is an invariant closed-and-open
subset $\calQ^\circ$ parameterizing torsion-free, rank $1$ quotients (\cite[Lemma~8.1.1]{Pan}). It is shown in \cite[Thm. 8.2.1, Thm. 9.1.1]{Pan} that a point $[q:\calO_{X_g}^{\oplus r}\twoheadrightarrow E]\in \calQ^{\circ}$ is GIT semistable if and only if $E$ is relatively semistable with respect to the relative dualizing sheaf. Therefore, the GIT (projective) quotient 
\begin{equation}\label{E:GIT-unJac}
\pdbst:=\calQ^\circ /\!\!/ \operatorname{SL}_{r}\times \operatorname{SL}_{N+1}.
\end{equation}
co-represents $\pdbstFunc$ and, by construction, it is endowed with a forgetful projective morphism $\Phi:\pdbst \to \Mgbar$.
\end{proof}

The fibers of the forgetful morphism $\Phi:\pdbst\to \Mgbar$ are related to compactified Jacobians of stable curves with respect to their canonical polarization.

\begin{fact} \label{Fact: Comparison}
	Assume ${\rm char}(k) = 0$.  Then the fiber of $\Phi:\pdbst\to \Mgbar$ over a stable curve $X$ is 
	\begin{equation}
\Phi^{-1}(X)\cong \bar{J}_{\omega_X,d}(X)/\Aut(X).
\end{equation}
\end{fact}
\begin{proof}
This fact is surely well-known (see e.g. \cite[\S 1.8]{alex}); however, we sketch a proof for the lack of a suitable reference.
 
Since ${\rm char}(k)=0$, the GIT quotient \eqref{E:GIT-unJac} is a universal categorical quotient (see \S\ref{Subsec: GIT}), which implies that the  variety $\pdbst$ co-represents the functor $\pdbstFunc$ \emph{universally}, i.e. for any scheme $T\to \pdbst$ the base change functor $\pdbstFunc\times_{\Hom(-,\pdbst)}\Hom(-,T)$ is co-represented by $T$. Applying this property to the inclusion $\Phi^{-1}(X)\hookrightarrow \pdbst$, we deduce that
$\Phi^{-1}(X)$ co-represents the functor $$(\pdbstFunc)_{|X}:=\pdbstFunc\times_{\Hom(-,\pdbst)}\Hom(-,\Phi^{-1}(X))\colon \text{$k$-Sch.} \to \Sets$$
which associates to a $k$-scheme $S$ the set of 
isomorphism classes of iso-trivial families $p:\calX\to S$ 
with fiber $X$ together with a coherent sheaf $\calI$, flat over 
$S$, which is fiberwise
torsion-free, rank $1$ and $\omega_{\calX/S}$-semistable of degree $d$.
Therefore, there is a natural transformation of functors $\bar{J}^{\sharp}_{\omega_X,d} \to (\pdbstFunc)_{|X}$ which factors through the natural action of $\Aut(X)$ on $\bar{J}^{\sharp}_{\omega_X,d}$
\begin{equation}\label{E:nat-tra}
\eta: \bar{J}^{\sharp}_{\omega_X,d}/\Aut(X) \to (\pdbstFunc)_{|X}.
\end{equation}
It is easily since that $\eta$ is a local isomorphism in the \'etale topology (using that every iso-trivial family becomes trivial after an \'etale base change); therefore, passing to the varieties that co-represent the above functors, we get an isomorphism
\begin{equation}\label{E:iso-corep}
\ov{\eta}: \bar{J}_{\omega_X,d}/\Aut(X) \stackrel{\cong}{\longrightarrow} \Phi^{-1}(X).
\end{equation}
\end{proof}

\begin{remark}
If ${\rm char}(k)\gg g$, then the stabilizers of the GIT quotient \eqref{E:GIT-unJac} are linearly reductive (by Lemma \ref{Lemma: StabToAut}\eqref{Lemma: StabToAut2} and Corollary  \ref{Cor: GrpRLinRed}), which implies that the above GIT quotient is a universal categorical quotient, and so the proof of Fact \ref{Fact: Comparison} still goes through.
We ignore the question of whether, in small characteristic,   the GIT quotient \eqref{E:GIT-unJac} remains universal  and  Fact \ref{Fact: Comparison} still holds true.
\end{remark}

\section{Deformation Theory} \label{Sec: DefThy}
In the previous section, we studied the representability properties of global moduli functors parameterizing rank $1$, torsion-free sheaves on a (fixed or varying) nodal curve.  
This section focuses on the  analogous local topic: the pro-representability properties of deformation functors parameterizing infinitesimal deformations of a rank $1$, torsion-free sheaf on a (fixed or varying) nodal curve.  The main result is Corollary~\ref{Cor: RingDesc}, which
explicitly describes miniversal deformations rings parameterizing such deformations. The corollary is used in \S\ref{Sec: Luna} to prove  Theorem~\ref{Thm: MainThmA} by relating the deformation rings to the completed local rings of (universal) compactified Jacobians (Thm.~\ref{Thm: MainLocStr}).

\subsection{The deformation functors} \label{Subsec: DefFunc}
We begin by reviewing the deformation functors of interest. 
 
\begin{defi} \label{Def: DeformDef}
	Suppose we are given a $k$-scheme $S$, a finitely presented $\calO_{S}$-module $F$, and
	a local $k$-algebra $A$ with  residue field $k$.  A \textbf{deformation of the pair} $(S, F)$ over $A$ is  a quadruple $(S_A, F_A, i, j)$ that consists of
	\begin{enumerate}
	
		\item a flat $A$-scheme $S_A$;
		\item a $A$-flat, finitely presented $\calO_{S_{A}}$-module $F_A$;
		\item an isomorphism $i \colon S_A \otimes_{A} k \isoto S$;
		\item an isomorphism $j \colon i_{*}(F_A \otimes_{A} k) \isoto F$ of $\calO_{S}$-modules.
	\end{enumerate}
	The \textbf{trivial deformation} of a pair $(S, F)$ over $A$ is defined to be the quadruple $(S \otimes_{k} A, F \otimes_{k} A,  i_{\text{can}}, j_{\text{can}})$.  Here $i_{\text{can}}
$ and $j_{\text{can}}$ are defined to be the canonical maps.
	If $(S'_A, F'_A, i', j')$ is a second deformation of the pair $(S, F)$, then an \textbf{isomorphism} from $(S_A, F_A, i, j)$ to $(S'_A, F'_A, i', j')$ is defined to be a pair $(\phi, \psi)$
that consists of

	\begin{enumerate}
		\item an isomorphism $\phi \colon S_{A} \isoto S'_{A}$ over $A$ such that $i'\circ (\phi\otimes 1)=i$;
		\item an isomorphism $\psi \colon \phi_{*}(F_A) \isoto F'_{A}$ of $\calO_{S_{A'}}$-modules such that $j'\circ i'_*(\psi \otimes 1)=j$.
	\end{enumerate}
 
	A \textbf{deformation of the scheme} $S$ over $A$ is defined by omitting the data of $F_{A}$ and $j$ from the definition of a deformation of a pair.  Similarly, an
\textbf{isomorphism} from one deformation $(S_A, i)$ of $S$ to another $(S'_{A}, i')$ is defined by omitting $\psi$ from Definition~\ref{Def: DeformDef}.  The scheme $S$ always
admits the \textbf{trivial deformation} over $A$ given by the pair $(S \otimes_{k} A, i_{\text{can}})$.
	
	A \textbf{deformation of a sheaf $F$} over $A$ is defined to be  a pair $(F_A, j)$ such that the quadruple $(S \otimes_{k} A, F_A, i_{\text{can}}, j)$ is a deformation of the pair $(S,
F)$.  An \textbf{isomorphism} from one deformation of $F$ to another is defined to be a deformation of the associated deformations of the pair $(S, F)$.  The trivial deformation of the
pair $(S, F)$
	may be considered as a \textbf{trivial deformation} of $F$.
\end{defi}

Let  $\art$ be the category of artin local  $k$-algebras with residue field $k$.
Recall that a \textbf{deformation functor}
is a functor $F \colon \art \to \sets$ of artin rings with the property that $F(k)$ is a singleton set.
We study the following deformation functors.
\begin{defi} \label{Def: DeformFunc}
	Define functors $\Def_{S}, \Def_{F}, \Def_{(S, F)} \colon \art \to \sets$ by
	  \begin{align}
		& \Def_{(S, F)}(A)  &:=&  \{\text{iso. classes of deformations of $(S,F)$ over $A$}\}, \\
		& \Def_{S}(A)   &:=&  \{\text{iso. classes of deformations of $S$ over $A$}\}, \notag \\
	  	& \Def_{F}(A)   &:=&  \{ \text{iso. classes of deformations of $F$ over $A$} \}. \notag
	  \end{align}
\end{defi}

The  automorphism groups $\Aut(S,F)$, $\Aut(S)$, and $\Aut(F)$  act on appropriate
 deformations functors, and this action will be studied in \S\ref{Sec: AutGrps}.
The reader should be familiar with the  definitions of $\Aut(S)$ and $\Aut(F)$, but perhaps not of
 $\Aut(S,F)$.

\begin{defi}\label{Def: Autom}
An \textbf{automorphism} of $(S,F)$ is a pair $(\sigma, \tau)$ that consists of:
\begin{enumerate}
\item  an automorphism $\sigma:  S \isoto S$;
\item an isomorphism of sheaves  $\tau: \sigma_*F \isoto F$.
\end{enumerate}
The group of automorphisms of $(S,F)$, denoted by $\Aut(S,F)$, fits into the exact sequence
\begin{equation}\label{Eqn: SeqAutom}
\begin{aligned}
 0\to \Aut(F) \to \Aut(S,F)& \to \Aut(S) \\
 (\sigma, \tau) &\mapsto \sigma.
\end{aligned}
\end{equation}
\end{defi}

These automorphism groups act naturally on their respective functors.

\begin{defi} \label{Def: DeformAction}
	Let $(S, F)$ be a given pair.  Then we define the \textbf{natural action} of
		\begin{itemize}
			\item  $\Aut(S,F)$ on $\Def_{(S,F)}$ by making an element $(\sigma, \tau) \in \Aut(S, F)$ acts as 
				 $$(S_{A}, F_A, i, , j) \mapsto (S_{A}, F_A, \sigma \circ i, \tau \circ \sigma_*(j)).$$  Here
				 $\tau\circ \sigma_*(j)$ is the composition $\sigma_*i_*(F_A\otimes_A k)\stackrel{\sigma_*(j)}{\longrightarrow}\sigma_*(F)\stackrel{\tau}{\longrightarrow} F;$
			\item  $\Aut(S)$ on $\Def_{S}$ by making an element  $\sigma\in \Aut(S)$ acts as
				$(S_{A}, i) \mapsto (S_{A}, \sigma \circ i);$
			\item  $\Aut(F)$ on $\Def_{F}$ by making an element  $\tau \in \Aut(F)$ acts as
				$(F_{A}, j) \mapsto (F_{A}, \tau \circ j).$
		\end{itemize}
\end{defi}

Later we will relate the above deformation functors to the Quot scheme, so it is convenient to introduce
the deformation functors arising from the Quot scheme.  To avoid irrelevant foundational issues, we only
define the deformation functors associated to nodal curves.
\begin{defi}
	Let $X$ be a nodal curve; $F$ a coherent sheaf on $X$; and $q \colon \calO_{X}^{\oplus r} \twoheadrightarrow F$ a surjection.  A \textbf{deformation of the pair} $(X, q)$ over $A \in
\art$ is a quadruple $(X_{A}, i, q_{A}, j)$
where $q_{A} \colon \calO_{X_A}^{r} \twoheadrightarrow F_{A}$ is a surjection
such that $(X_{A}, F_{A}, i, j)$ is a deformation of $(X,F)$ in the sense of Definition~\ref{Def: DeformDef}.
Furthermore, we require that the isomorphism $j \colon i_{*}(F_{A} \otimes_A k) \isoto F$ respects quotient maps, in the
sense that $q = j \circ i_{*}(q_{A} \otimes 1)$.

Given a second deformation $(X'_{A}, i', q'_{A}, j')$ of $(X,q)$ over $A$,
an \textbf{isomorphism} from $(X_{A}, i, q_{A}, j)$ to $(X'_{A}, i', q'_{A}, j')$
is defined to be a pair $(\phi, \psi)$ consisting of

\begin{enumerate}
	\item an isomorphism  $\phi \colon X_{A} \isoto X'_{A}$ over $A$;  
	\item an isomorphism  $\psi \colon \phi_{*}(F_{A}) \isoto F'_{A}$ of $\calO_{A'}$-modules
such that $\psi\circ \phi_*(q_A)=q_A'$.
\end{enumerate}

A \textbf{deformation of $q$} over $A\in \art$ is defined to be a deformation of $(X,q)$ of the form
$(X \otimes_{k} A, i_{\text{can}}, q_{A}, j)$, where $(X \otimes_{k} A, i_{\text{can}})$
is the trivial deformation.  An \textbf{isomorphism} from one deformation of $q$ to another
is defined to be an isomorphism of the associated deformations of $(X, q)$.
\end{defi}

The deformation functors $\Def_{q}$ and $\Def_{(X,q)}$ are defined in the expected manner.

\begin{defi}
We define functors $\Def_{q}, \Def_{(X,q)} \colon
\art \to \sets$ by
	\begin{align}
		\Def_{(X,q)}(A) 	& :=		\{ \text{iso. classes of deformations of $(X,q)$ over $A$} \}, \\
		\Def_{q}(A) 	& := 	\{ \text{iso. classes of deformations of $q$ over $A$}  \}. \notag
	\end{align}
\end{defi}

To study $\pdbst$, we also need a slight generalization of $\Def_{(X, q)}$.
\begin{defi}
	Suppose that $X$ is a stable curve; $F$ a coherent sheaf; $q \colon \calO_{X}^{\oplus r} \twoheadrightarrow F$ a quotient map; and $p \colon X \hookrightarrow \bbP^N$ is a $10$-canonical embedding.  A \textbf{deformation of the pair $(p, q)$} over
$A\in \art$ is a quadruple $(p_{A}, i, q_{A}, j)$, where $p_{A} \colon X_{A} \hookrightarrow \bbP^{N}_{A}$ is closed embedding and
$(X_{A}, i, q_{A}, j)$ is a deformation of the pair $(X, q)$.   We further require
\begin{itemize}
	\item the line bundles $\calO_{X_{A}}(1)$ and $\omega_{X_{A}/A}^{\otimes 10}$ are isomorphic;
	\item $p_{A} \otimes 1= p \circ i$.
\end{itemize}

	Given a second deformation $(p'_{A}, i', q_{A}', j')$ of $(p, q)$, we define an \textbf{isomorphism} from the first deformation to the second to be
an isomorphism $(\phi,\psi)$ of the associated deformations of $(X, q)$ with the property that
	\begin{displaymath}
		p_{A} = p'_{A} \circ \phi.
	\end{displaymath}
\end{defi}

\begin{defi}
	Define the functor $\Def_{(p, q)} \colon \art \to \sets$ by
	  \begin{displaymath}
		\Def_{(p, q)}(A)  :=  \{\text{iso. classes of deformations of $(p,q)$ over $A$}\}. \\
	  \end{displaymath}
\end{defi}

Note that there are forgetful  transformations $\Def_{q} \to \Def_{F}$ and $\Def_{(p,q)}\to \Def_{(X,q)} \to \Def_{(X,F)}$ that are formally smooth
once $F$ is sufficiently positive (see Lemma~\ref{Lemma: QuotToSh}).

The deformation functors we study are parameterized by complete local $k$-algebras.  
There are several different  ways in which a complete local $k$-algebra can parameterize a deformation functor.

 We say that a functor $\Def \colon \art \to \sets$ is \textbf{pro-representable} if it is isomorphic to the formal spectrum functor 
\begin{equation} \label{Eqn: ProYoneda}
\begin{aligned}
\Spf(R)\colon & \art \to \sets \\
	A & \mapsto \Hom_{\text{loc}}(R, A),
\end{aligned}
\end{equation}
for some complete local $k$-algebra $R$ with residue field $k$.
A pair $(R, \pi)$ consisting of such an algebra $R$ and an isomorphism $\pi \colon \h[R]  \isoto \Def$ is said to be a \textbf{universal deformation ring} for $\Def$. An easy application of Yoneda's lemma shows that if $(R, \pi)$ and $(R', \pi')$ are both universal deformation rings for $\Def$, then there is a canonical isomorphism $R \cong R'$.  An exercise in unraveling definitions shows that the completed local ring of an appropriate Quot scheme is a deformation ring for $\Def_{q}$, and
similarly for $\Def_{(X,q)}$.

The functors $\Def_{F}$ and $\Def_{(X, F)}$ are not always pro-representable, but do satisfy the weaker
condition of admitting a miniversal deformation ring.  Suppose that we are given a pair $(R, \pi)$ consisting of
a complete local $k$-algebra $R$ and a natural transformation $\pi \colon \h[R] \to \Def$.  We say that $(R, \pi)$ is
a \textbf{versal deformation ring} for $\Def$ if $\pi$ is formally smooth.  If $\pi$ has the additional property that it induces an isomorphism on tangent spaces, then we say that $(R, \pi)
$ is a \textbf{miniversal} (or semiuniversal) \textbf{deformation ring}.  One can show that if $(R, \pi)$ and $(R', \pi')$ are both miniversal deformation rings for $\Def$, then $R$ is isomorphic to  $R'$, but in
contrast to the situation for deformation rings, there is no distinguished isomorphism $R \cong R'$.  We now proceed to construct miniversal deformation rings for $\Def_{I}$ and $
\Def_{(X, I)}$.

\subsection{The miniversal deformation rings}
The existence of miniversal deformation rings for $\Def_{I}$ and $\Def_{(X, I)}$ can be deduced from theorems
of Schlessinger, but for later computations, we will want an explicit description of these rings.  We derive such a
description by relating $\Def_{I}$ and $\Def_{(X, I)}$ to the analogous deformation functors associated
to the node $\node$.  We begin by fixing some notation for the node.

\begin{defi}\label{Def: Stan-node}
	The \textbf{standard node} $\node$ is the complete local $k$-algebra $k[[x,y]]/(xy)$.  The \textbf{normalization} of the standard node is denoted $\nodenm$.
\end{defi}
As a subring of the total ring of fractions $\operatorname{Frac}(\node)$, the normalization of $\node$  is equal to $\nodenm = \node[x/(x+y)]$.  It follows that the quotient $\nodenm/
\node$  is a 1-dimensional $k$-vector space spanned by the image of $x/(x+y)$.
Recall that $\tilde{\mathcal O}_0$ is also isomorphic to the ring $k[[x]]\oplus k[[y]]$, and the inclusion $\mathcal O_0\to \tilde {\mathcal O}_0$   factors as
$$
\frac{k[[x,y]]}{(xy)}\to k[[x]]\oplus k[[y]]\isoto \frac{k[[x,y]]}{(xy)}\left[\frac{x}{x+y}\right]
$$
where the first map is given by $h(x,y)\mapsto (h(x,0),h(0,y))$ and the second map is given  by $(f,g)\mapsto (fx+gy)/(x+y)$.

Over $\node$, there are exactly two rank $1$, torsion-free modules up to isomorphism: the free module and a unique
module that fails to be locally free.  A proof of this statement can be found in \cite{dsouza}, where it is deduced from \cite[Thm.~3.1]{vasconcelos}.  There are several ways to describe
the module that fails to be locally free.
\begin{defi} \label{Def: NotFreeMod}
	The unique rank 1, torsion-free module $I_0$ over $\node$ that fails to be locally free can be described as any one of the following modules:
	\begin{enumerate}
		\item the ideal $(x, y) \subset \node$, considered as an $\node$-module,
		\item the extension $\nodenm \supset \node$, considered as an $\node$-module,
		\item the $\node$-module with presentation $\langle e, f \colon y \cdot e = x \cdot f = 0\rangle$.
	\end{enumerate}
\end{defi}
An isomorphism from the 3rd module to the 1st module is given by $e \mapsto x$, $f \mapsto y$, while an isomorphism from the 3rd
to the 2nd is given by $e \mapsto x/(x+y)$, $f \mapsto y/(x+y)$.  In passing from one model of $I_0$ to another, we will always implicitly identify the modules via these specific
isomorphisms.

\subsubsection{Formal smoothness and reduction to the case of nodes}  \label{Subsec: ReduceToNode}
If $I$ is a rank $1$, torsion-free sheaf on a nodal curve $X$, then the study of $\Def_{I}$ and $\Def_{(X,I)}$
reduces to the study of $\Def_{I_0}$ and $\Def_{(\node, I_0)}$.  Indeed, say that $\Sigma$
is the set of nodes where $I$ fails to be locally free.  For a given $e \in \Sigma$, let $X_e$ denote the spectrum of
the completed local ring $\widehat{\calO}_{X,e}$ and $I_e$ the pullback of $I$ to $X_e$.  There are
forgetful transformations relating global deformations to local deformations:
	\begin{align} \label{Eqn: LocToGlo}
		\Def_{(X,I)} 		& \to \prod_{e \in \Sigma} \Def_{(X_{e}, I_{e})}, \\
		\Def_{I} 		& \to \prod_{e \in \Sigma} \Def_{I_{e}},  \nonumber \\
		\Def_{X} 		& \to \prod_{e \in \Sigma} \Def_{X_e}. \nonumber
	\end{align}
	All of these transformations are formally smooth.  Indeed, for the last  transformation, this
	is \cite[Prop.~1.5]{deligne}.  That result together with \cite[A.1-4]{fantechi} shows that the first
	transformation is formally smooth.  Essentially the same argument also shows that the middle
	transformation is formally smooth, and this is a special case of  \cite[B.1]{fantechi}.

We now construct deformation rings for $\Def_{I_0}$ and $\Def_{(\node, I_0)}$.
We begin by parameterizing deformations of $(\node, I_0)$.

\begin{defi} \label{Def: WhatIsS2}
	Define  $S_2 = S_2(\node, I_0):= k[[t, u, v]]/( u v - t).$ 
	The deformation $(\mathcal O_{S_2},  I_{S_2},$ $ i, j)$ of $(\mathcal O_0, I_0)$ over $S_2$ is defined by
	setting
		\begin{itemize}
			\item $\mathcal O_{S_2}:= S_2[[x, y]]/( x y -t);$
			\item $I_{S_2}$ equal to the $\mathcal O_{S_2}$-module with presentation
					\begin{equation}
				I_{S_2}:=
						\langle \tilde{e}, \tilde{f} : y \cdot \tilde{e} = - u \cdot \tilde{f}, x \cdot \tilde{f} = -v \cdot \tilde{e}
						\rangle;
					\end{equation}
			\item $i \colon \mathcal O_{S_2}  \otimes_{S_2} k \isoto \node$ equal to the isomorphism that is the identity on the variables $x$ and $y$;
			\item $j \colon i_{*}(I_{S_2} \otimes_{S_2} k) \isoto I_0$ equal to the isomorphism given by rules $\tilde{e} \otimes 1
					\mapsto e$ and $\tilde{f} \otimes 1 \mapsto f$.
		\end{itemize}
\end{defi}

Deformations of $I_0$ alone are parameterized similarly.
\begin{defi} \label{Def: WhatIsS1}
	Define $S_1 = S_1(I_0) := k[[u, v]]/(u v)$.  The algebraic deformation $(I_{S_1}, j)$ of $I_0$ over $S_1$ is defined by
	setting
	\begin{itemize}
	\item $\mathcal O_{S_1}= S_{1}[[x,y]]/(xy)$;
		\item $I_{S_1}$  equal to the $\calO_{S_1}$- module with presentation
					\begin{equation}
						\mathcal I :=
						\langle \tilde{e}, \tilde{f} : y \cdot \tilde{e} = - u \cdot \tilde{f}, x \cdot \tilde{f} = -v \cdot \tilde{e}
						\rangle;
					\end{equation}
		\item $j \colon i_{*}(I_{S_1} \otimes_{S_1} k) \isoto I_0$ equal to isomorphism given by rules $\tilde{e} \otimes 1
					\mapsto e$ and $\tilde{f} \otimes 1 \mapsto f$.
	\end{itemize}
\end{defi}

\begin{remark}
It may be more intuitive to describe the deformations in geometric terms.  There is a versal deformation (resp. trivial deformation) $\mathscr X\to B$ of the node, with base $$
B=\operatorname{Spec}k[u,v,t]/(uv-t) \ \
(\text{resp.}\ \ \ B=\operatorname{Spec}k[u,v]/(uv))
$$
and total space
$$
\mathscr X=B\times \operatorname{Spec}k[x,y]/(xy-t) \ \  (\text{resp.} \ \  \mathscr X=B\times \operatorname{Spec}k[x,y]/(xy)).
$$
The module
$I_{S_2}$ (resp. $I_{S_1}$) is essentially the ``universal'' ideal $I=(x-u,y-v)\subseteq \Gamma(\mathscr X,\mathscr O_X)$ considered as a module as in Definition~\ref{Def:
NotFreeMod} (3).
\end{remark}

\begin{lemma} \label{Lemma: LocDefRing}
	 $S_2$ is a miniversal deformation ring for $\Def_{(\node, I_0)}$.  More precisely, the algebraic deformation $(\calO_{S_2}, i,  I_{S_2}, j)$ defines a transformation $\h[S_2] \to
\Def_{(\node, I_0)}$ that realizes $S_2$ as the miniversal deformation ring for $\Def_{I}$.   Similarly, $S_1$ is a miniversal deformation ring for $\Def_{I_0}$.
\end{lemma}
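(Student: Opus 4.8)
The plan is to check, for the transformation $\pi_2\colon\h[S_2]\to\Def_{(\node,I_0)}$ classifying the deformation $(\calO_{S_2},i,I_{S_2},j)$, the two defining properties of a miniversal deformation ring — formal smoothness and bijectivity on tangent spaces — and likewise for $\pi_1\colon\h[S_1]\to\Def_{I_0}$. First one must see that these transformations are well defined, i.e.\ that the displayed families are deformations. Under the identification $S_2\cong k[[u,v]]$ given by $t=uv$, the ring $\calO_{S_2}=S_2[[x,y]]/(xy-uv)$ is $S_2$-flat because $xy-uv$ has nonzero, hence nonzerodivisor, image in $\kappa(\mathfrak p)[[x,y]]$ for every prime $\mathfrak p$ of $S_2$; and $I_{S_2}=\operatorname{coker}\left(\begin{smallmatrix}y&u\\v&x\end{smallmatrix}\right)$ is $S_2$-flat by the local criterion, its two-periodic $\calO_{S_2}$-free resolution — coming from the matrix factorization $\left(\begin{smallmatrix}y&u\\v&x\end{smallmatrix}\right)\left(\begin{smallmatrix}x&-u\\-v&y\end{smallmatrix}\right)=(xy-uv)I$ — staying exact on every fibre. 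Since $S_1=S_2/(t)$ and, correspondingly, $\calO_{S_1}=\calO_{S_2}\otimes_{S_2}S_1$, $I_{S_1}=I_{S_2}\otimes_{S_2}S_1$, flatness of the $S_1$-family follows by base change.

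For the tangent spaces, the argument uses $I_0\cong\node/(x)\oplus\node/(y)$, equivalently the two-periodic resolution $\cdots\xrightarrow{\operatorname{diag}(x,y)}\node^{2}\xrightarrow{\operatorname{diag}(y,x)}\node^{2}\to I_0\to0$; a direct computation gives $\Ext^{1}_{\node}(I_0,I_0)\cong k^{2}$ and $\Ext^{2}_{\node}(I_0,I_0)\cong k^{2}$. For $\Def_{I_0}$ this $\Ext^{1}$ is the tangent space. For the pair, the forgetful transformation $\Def_{(\node,I_0)}\to\Def_{\node}$ is zero on tangent spaces: over $k[\epsilon]$, a smoothing $xy=\epsilon$ of the node forces any flat lift of $I_0$ to be annihilated by $\epsilon$ — multiplying the two defining relations of a lift by $x$ and by $y$ and combining yields $\epsilon\cdot(\text{unit})\cdot\tilde e=0$ and likewise for $\tilde f$ — contradicting flatness; and the infinitesimal automorphisms of $\node$ act trivially on $\Ext^{1}_{\node}(I_0,I_0)$, since they merely rescale the two branch coordinates and such a rescaling is absorbed by rescaling $\tilde e$, resp.\ $\tilde f$; hence $T\Def_{(\node,I_0)}\cong\Ext^{1}_{\node}(I_0,I_0)\cong k^{2}$. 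By Schlessinger's theorem both functors have a miniversal ring, necessarily of the form $R=k[[u,v]]/J$ with $J\subseteq\mathfrak m^{2}$. For $\Def_{I_0}$, a Yoneda-square computation shows the primary obstruction vanishes on the squares of the two natural basis vectors of $\Ext^{1}$ and generates an $\Ext^{2}$-summand on their cross term, so the quadratic form equals $uv$ up to scalar and linear change of coordinates; hence $J$ contains an element $g=uv+(\text{higher order})$, and the formal Morse lemma gives $k[[u,v]]/(g)\cong k[[u,v]]/(uv)$.

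Finally, assemble the pieces. By versality of the miniversal ring $R$, each $\pi_i$ factors as $\h[S_i]\to\h[R]\to\Def$ for a local homomorphism $\psi\colon R\to S_i$. The first-order deformations of $I_{S_i}$ obtained from $(u,v)\mapsto(\epsilon,0)$ and from $(u,v)\mapsto(0,\epsilon)$ are exactly the classes that turn on the relations $y\tilde e=-\epsilon\tilde f$, resp.\ $x\tilde f=-\epsilon\tilde e$; these are the two basis vectors of $\Ext^{1}_{\node}(I_0,I_0)$ used above, so $\pi_i$ — and hence $\psi$ — induces an isomorphism on tangent spaces, whence $\psi$ is surjective by Nakayama's lemma. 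For $S_2$ this exhibits $k[[u,v]]=S_2$ as a quotient of $R=k[[u,v]]/J$, which forces $J=0$ (a nonzero ideal of the domain $k[[u,v]]$ cuts the Krull dimension) and then $\psi$ an isomorphism (a surjective endomorphism of a Noetherian ring is an automorphism), so $R=S_2$ and $(S_2,\pi_2)$ is miniversal. For $S_1$, $R=k[[u,v]]/J$ with $g\in J$, so there is a surjection $k[[u,v]]/(uv)\cong k[[u,v]]/(g)\twoheadrightarrow R$; composing with $\psi\colon R\twoheadrightarrow S_1=k[[u,v]]/(uv)$ gives a surjective endomorphism of the Noetherian ring $k[[u,v]]/(uv)$, which is an automorphism, so both surjections are isomorphisms, $R\cong S_1$, and $(S_1,\pi_1)$ is miniversal.

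The only genuinely nontrivial ingredient is the obstruction computation that identifies the miniversal ring of $\Def_{I_0}$. One may instead bypass it by proving formal smoothness of $\pi_1$ and $\pi_2$ directly: for a small extension $A'\twoheadrightarrow A$, every flat deformation of $I_0$ over $A'[[x,y]]/(xy-\tau)$ is, after a Nakayama-type normalization of its $2\times2$ relation matrix, isomorphic to $\langle\tilde e,\tilde f:y\tilde e=-u'\tilde f,\ x\tilde f=-v'\tilde e\rangle$ with $u'v'=\tau$, which realizes it as $\pi_2$ evaluated at the homomorphism $u\mapsto u',\,v\mapsto v'$ (for $\pi_1$, take $\tau=0$). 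This normalization is the technical heart of the alternative approach; it is classical, and can also be extracted from the literature on deformations of torsion-free sheaves at a node.
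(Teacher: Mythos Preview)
Your argument is correct and is genuinely different from the paper's. The paper does not give a self-contained proof: it simply cites \cite{CMK} for the $S_1$ statement (where the result is deduced from the geometry of the compactified Jacobian of an irreducible nodal plane cubic) and remarks that the $S_2$ statement follows by the same reasoning applied to a general pencil through such a cubic, so that the curve is allowed to vary as well. In other words, the paper's approach is global --- it reads off the local deformation ring from a known moduli space.

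Your approach is entirely local and self-contained. The flatness check via the matrix factorization of $xy-uv$, the identification $T\Def_{I_0}\cong T\Def_{(\node,I_0)}\cong\Ext^1_{\node}(I_0,I_0)\cong k^2$ (including the observation that first-order smoothings of $\node$ support no flat lift of $I_0$, and that infinitesimal automorphisms of $\node$ act trivially on $\Ext^1$), and the dimension argument forcing $J=0$ for $S_2$ are all clean and correct. For $S_1$, your obstruction computation is right: the Yoneda squares of the two basis Ext-classes vanish (the one-parameter families $v=0$ and $u=0$ visibly extend to all orders), while the cross term is the relation $uv=0$. Your alternative route --- normalizing the $2\times 2$ relation matrix of an arbitrary flat lift and reading off $(u',v')$ with $u'v'=\tau$ --- is the standard explicit argument and would also suffice.

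What each approach buys: the paper's is a one-line reduction to a known geometric fact, but imports a fair amount of global machinery. Yours exposes exactly why the miniversal base of the pair is smooth (there is no room for an equation once the tangent space is $2$-dimensional and one has a $2$-dimensional flat family) and why $S_1$ acquires precisely the relation $uv=0$ (the primary obstruction). A referee might ask you to flesh out the obstruction/Yoneda-square computation or to carry out the relation-matrix normalization in full, since that is where the content lies; the rest is routine.
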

\begin{proof}
	The claim concerning the ring $S_1$ was established in the course of proving Proposition~2.6 of \cite{CMK}.  The same argument holds for $S_2$ provided that one replaces
the standard irreducible, nodal plane cubic used in that proof with a general pencil containing such a curve.
\end{proof}

Given a rank $1$, torsion-free sheaf $I$ that fails to be locally free at a set of nodes $\Sigma$, there is a simple relation between
$\Def_{I}$ and $\prod_{e \in \Sigma} \Def_{I_e}$.
\begin{defi}
	Let $\Def^{{\rm l.t.}}_{I} \subset \Def_{I}$ be the subfunctor parameterizing deformations that map to the trivial
	deformation under $\Def_{I} \to \prod_{e \in \Sigma} \Def_{I_e}$.  Define $\Def_{(X,I)}^{{\rm l.t.}}$ similarly.
	Elements of these deformation functors (valued in a given ring) are called \textbf{locally trivial} deformations (over that ring).
\end{defi}

\begin{lemma} \label{Lemma: LocTriv}
	Let $X$ be a nodal curve; $\Sigma$ a set of nodes; $g \colon X_{\Sigma} \to X$ the map that normalizes the nodes $\Sigma$;
and $I := g_{*}(L)$ the direct image of a line bundle $L$ on $X_{\Sigma}$.  Then the rule
\begin{equation} \label{Eqn: LocTrivPara}
\begin{aligned}
     \Def_{L}(A) & \longrightarrow \Def_{I}^{{\rm l.t.}}(A)\\
	(L_A, j) & \mapsto ((g\times \id)_{*}(L), (g\times \id)_{*}(i))
\end{aligned}
\end{equation}
for any $A\in \art$, defines an isomorphism $\Def_{L} \stackrel{\cong}{\longrightarrow} \Def_{I}^{{\rm l.t.}}$.
\end{lemma}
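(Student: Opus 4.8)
The plan is to prove the lemma by exhibiting both $g_{A*}$ and an explicit inverse $I_A\mapsto L_A$, and then checking they are mutually inverse and natural in $A$; conceptually the point is that $L$ is locally free everywhere on $X_\Sigma$, so $\Def_L$ should see exactly the deformations of $I=g_*(L)$ that are trivial at the nodes $\Sigma$. Throughout, fix $A\in\art$ and let $g_A\colon X_\Sigma\otimes_k A\to X\otimes_k A$ be the base change of $g$; it is again the partial normalization along $\Sigma$, hence finite and an isomorphism over the complement of $g_A^{-1}(\Sigma)$.

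First I would check that the rule \eqref{Eqn: LocTrivPara} is well defined, i.e. that $(g_{A*}(L_A),g_*(i))$ is a \emph{locally trivial} deformation of $I$. Flatness of $g_{A*}(L_A)$ over $A$ is immediate: since $g_A$ is finite, the stalk of $g_{A*}(L_A)$ at a point of $X\otimes_k A$ is a finite direct sum of stalks of the $A$-flat sheaf $L_A$. Since $g_A$ is affine, $g_{A*}$ commutes with the base change $\Spec k\to\Spec A$, giving a canonical identification $g_{A*}(L_A)\otimes_A k\cong g_*(L_A\otimes_A k)$; composing $g_*$ of the structure isomorphism $i$ with this produces the identification $g_*(i)$, so $(g_{A*}(L_A),g_*(i))$ is a deformation of $I=g_*(L)$. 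To see it is locally trivial, restrict to the formal neighborhood of a node $e\in\Sigma$: there $X_\Sigma\otimes_k A$ is a disjoint union of two smooth formal branches on which $L_A$ is free of rank one, and a direct computation identifies $g_{A*}(L_A)$ over $\Spec(\widehat{\calO}_{X,e}\otimes_k A)$ with $I_0\otimes_k A$ carrying its canonical identification with $I_0$ — that is, with the trivial deformation of $I_0$. Hence $g_{A*}(L_A)$ maps to the distinguished element under $\Def_I\to\prod_{e\in\Sigma}\Def_{I_e}$, so it lies in $\Def_{I}^{\text{l.t.}}(A)$.

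Next I would construct the inverse. Given $(I_A,j)\in\Def_{I}^{\text{l.t.}}(A)$, set $L_A:=g_A^*(I_A)/\mathcal T_A$, where $\mathcal T_A\subseteq g_A^*(I_A)$ is the subsheaf of sections supported on the finite subscheme $g_A^{-1}(\Sigma)$; this is functorial in $I_A$. Over the complement of $g_A^{-1}(\Sigma)$, $g_A$ is an isomorphism and $I$ is locally free, so $L_A\cong(g_A^{-1})^*(I_A)$ is a line bundle there; over the formal neighborhood of a point of $g_A^{-1}(\Sigma)$, local triviality lets us replace $I_A$ by $I_0\otimes_k A$, and the same local model computation as above shows $L_A$ is free of rank one and $A$-flat there. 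Thus $L_A$ is a line bundle on $X_\Sigma\otimes_k A$, flat over $A$, with $L_A\otimes_A k$ canonically $L$, so it defines a deformation of $L$. The unit of adjunction $I_A\to g_{A*}g_A^*(I_A)$ followed by $g_{A*}$ of the projection $g_A^*(I_A)\to L_A$ gives a natural map $\iota\colon I_A\to g_{A*}(L_A)$, which is an isomorphism over the complement of $\Sigma$ (where $g_A$ is an isomorphism and no torsion is killed) and which, near each $e\in\Sigma$, is identified via local triviality with an endomorphism $\mathrm{id}+\psi$ of $I_0\otimes_k A$ with $\psi\in\End_{\node}(I_0)\otimes_k\mathfrak m_A$; since $\mathfrak m_A$ is nilpotent so is $\psi$, hence $\iota$ is an isomorphism. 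The analogous computation with the counit $g_A^*g_{A*}(L_A)\to L_A$ shows $g_A^*(g_{A*}(L_A))/\mathcal T_A\cong L_A$, so the two constructions are mutually inverse on isomorphism classes. Finally, naturality in $A$ follows because the inverse is characterized by the property that $L_A$ is the unique line bundle on $X_\Sigma\otimes_k A$ with $g_{A*}(L_A)\cong I_A$ compatibly with the identifications over the closed point, and this property is visibly stable under base change $A\to A'$.

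The main obstacle is the middle step: verifying that $L_A$, patched from its description off $\Sigma$ and its local description at the $\Sigma$-nodes, is genuinely a line bundle on $X_\Sigma\otimes_k A$, and that the globally defined map $\iota$ is an isomorphism rather than merely a monomorphism. Both points reduce to the observation that an endomorphism of $I_0\otimes_k A$ reducing to the identity modulo $\mathfrak m_A$ is a unit — being $\mathrm{id}$ plus a nilpotent in $\End_{\node}(I_0)\otimes_k A$ — which is what allows the local and global descriptions to be matched without ambiguity; the remaining verifications (flatness, the base-change identity for $g_{A*}$, and the explicit local model at a node) are routine.
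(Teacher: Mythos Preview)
Your proof is correct, but it takes a genuinely different route from the paper's. The paper argues abstractly: it observes that the composition $\Def_L \to \Def_I \to \prod_{e\in\Sigma}\Def_{I_e}$ is trivial, so the map lands in $\Def_I^{\text{l.t.}}$; then it uses tangent--obstruction theory to show that $\Def_I^{\text{l.t.}}$ is formally smooth with tangent space $H^1(\underline{\End}(I)) \cong H^1(X_\Sigma,\calO_{X_\Sigma})$, which coincides with the tangent space to $\Def_L$ in a way making the induced map the identity; formal smoothness on both sides then forces the map to be an isomorphism. No inverse is ever constructed.

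Your approach instead builds the inverse explicitly as $I_A \mapsto g_A^*(I_A)/\mathcal T_A$ and verifies the two compositions are isomorphic to the identity via the adjunction unit/counit and the ``identity plus nilpotent'' trick. This is more hands-on and more elementary---it sidesteps any appeal to tangent--obstruction theory or formal smoothness---and it gives extra information (namely, what the inverse actually is). The paper's argument is shorter and fits the surrounding machinery: tangent--obstruction theory is already in play elsewhere, and the identification $\underline{\End}(I)\cong g_*\calO_{X_\Sigma}$ is established later (Lemma~\ref{Lemma: aut-I}) and reused here. One small point worth tightening in your version: when you assert $\iota$ reduces to the identity modulo $\mathfrak m_A$, this really is the triangle identity for the $(g_A^*,g_{A*})$-adjunction applied to $I = g_*(L)$, and it would be cleaner to say so directly rather than route it through local coordinates.
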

\begin{proof}
	The map $\Def_{L} \to \Def_{I}$ defined by Eqn.~\eqref{Eqn: LocTrivPara} has the property that the composition $\Def_{L} \to \Def_{I} \to \prod_{e \in \Sigma} \Def_{I_e}$ is the
trivial map, so there is an induced map $\Def_{L} \to \Def_{I}^{{\rm l.t.}}$.
Studying the map $\Def_{I} \to \prod \Def_{I_e}$ and the associated map on tangent-obstruction theories, one can show using the local-to-global spectral sequence for $\operatorname{Ext}$ that
$\Def_{I}^{{\rm l.t.}}$ is formally smooth with tangent space $T(\Def_{I}^{{\rm l.t.}}) = H^{1}(\underline{\End}(I))$.  This vector space is just
$H^{1}(X_{\Sigma}, \calO_{X_{\Sigma}})$ (see e.g.~the proof of Lemma \ref{Lemma: aut-I}), which can be  identified with the
tangent space to $\Def_{L}$ in such a way that $T(\Def_{L}) \to T(\Def_{I}^{{\rm l.t.}})$ is the identity.  By formal
smoothness, it follows that $\Def_{L} \to \Def_{I}^{{\rm l.t.}}$ is an isomorphism.
\end{proof}

Let us denote by $R_1$ the miniversal deformation ring of $\Def_I$ and by $R_2$ the miniversal deformation ring of
$\Def_{(X,I)}$ (which exists by, say, \cite[\S~A]{fantechi}).
 Lemma~\ref{Lemma: LocDefRing} together with the discussion following Eqn. \eqref{Eqn: LocToGlo} allows us to describe the miniversal deformation rings $R_1$ and $R_2$ as
follows.

\begin{cor} \label{Cor: RingDesc}
	Let $X$ be a nodal curve; $I$ a rank $1$, torsion-free sheaf on $X$; and $\Sigma$ the set of nodes
	where $I$ fails to be locally free.  For every $e \in \Sigma$,
	fix an identification of $(\hat{\calO}_{X,e}, I_e)$ with $(\node, I_0)$.
	Then the forgetful transformations in Eqn.~\eqref{Eqn: LocToGlo} induce inclusions
	\begin{align*}
		\widehat{\bigotimes_{e \in \Sigma}} k[[ U_{\el}, U_{\er}]]/( U_{\el} U_{\er}) \cong \widehat{\bigotimes}_{e \in \Sigma}  S_{1} &  \hookrightarrow  R_1, \\
		\widehat{\bigotimes_{e \in \Sigma}} k[[U_{\el}, U_{\er}, T_e]]/(U_{\el} U_{\er}-T_e) \cong \widehat{\bigotimes}_{e \in \Sigma}  S_{2}& \hookrightarrow  R_2,
	\end{align*}
	and each inclusion realizes the larger ring as a power series ring over the smaller ring.
\end{cor}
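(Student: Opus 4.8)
The plan is to combine two facts already established in the excerpt: (a) the forgetful transformations of \eqref{Eqn: LocToGlo}, namely $\Def_{(X,I)}\to\prod_{e\in\Sigma}\Def_{(X_e,I_e)}$ and $\Def_I\to\prod_{e\in\Sigma}\Def_{I_e}$, are formally smooth; and (b) by Lemma~\ref{Lemma: LocDefRing}, under the fixed identifications $(\widehat{\calO}_{X,e},I_e)\cong(\node,I_0)$, the ring $S_2$ (resp.\ $S_1$) is a miniversal deformation ring for $\Def_{(X_e,I_e)}$ (resp.\ $\Def_{I_e}$). The bridge between them is the elementary principle that if $u\colon F\to G$ is a formally smooth morphism of deformation functors admitting miniversal hulls $R_F$, $R_G$, then $R_F$ is (non-canonically) a power series ring over $R_G$, in $\dim_k\ker\bigl(F(k[\varepsilon])\to G(k[\varepsilon])\bigr)$ variables, via a local homomorphism $R_G\to R_F$ that induces $u$ on the hulls.

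First I would do the bookkeeping for the target functor. Since a finite product of deformation functors has as miniversal hull the completed tensor product of the hulls (check Schlessinger's conditions directly: the versal element over $\widehat{\bigotimes}_{e}S_j$ is the tuple of versal elements, and a lifting problem for $\prod_e$ splits into one for each factor), $\widehat{\bigotimes}_{e\in\Sigma}S_1$ is a miniversal hull for $\prod_{e}\Def_{I_e}$ and $\widehat{\bigotimes}_{e\in\Sigma}S_2$ for $\prod_{e}\Def_{(X_e,I_e)}$. Unwinding the completed tensor product and renaming, in the factor indexed by $e$, the variables $u,v$ of $S_1=k[[u,v]]/(uv)$ as $U_{\el},U_{\er}$ (and, for $S_2=k[[t,u,v]]/(uv-t)$, additionally $t$ as $T_e$) yields exactly the two displayed rings $k[[U_{\el},U_{\er}:e\in\Sigma]]/(U_{\el}U_{\er}:e\in\Sigma)$ and $k[[U_{\el},U_{\er},T_e:e\in\Sigma]]/(U_{\el}U_{\er}-T_e:e\in\Sigma)$. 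Applying the principle above to the two transformations of \eqref{Eqn: LocToGlo} then gives local homomorphisms $\widehat{\bigotimes}_e S_1\hookrightarrow R_1$ and $\widehat{\bigotimes}_e S_2\hookrightarrow R_2$ inducing those transformations on hulls, each exhibiting the larger ring as a power series ring over the smaller; since such a homomorphism is faithfully flat, it is injective, i.e.\ an inclusion. (If desired, the number of new variables can be read off as the dimension of the relative tangent space and shown to equal $g(X)-b_1(\Gamma)$ for $R_1$ and $4g-3-b_1(\Gamma)-\#E(\Gamma)$ for $R_2$ --- e.g.\ via Lemma~\ref{Lemma: LocTriv} for the first --- matching the $W_i$ of Theorem~\ref{Thm: PolishedMainThm}; this refinement is not needed for the present statement.)

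The only real obstacle is the deformation-theoretic principle itself, and specifically the requirement that the induced homomorphism $R_G\to R_F$ be formally \emph{smooth} and \emph{compatible with $u$}, rather than merely exist. I would handle this by passing to the relative deformation functor: for an Artinian $R_G$-algebra $A$, let $(F/G)(A)$ be the set of deformations in $F(A)$ whose image in $G(A)$ is the distinguished one determined by the structure map $R_G\to A$; formal smoothness of $u$ makes $F/G$ formally smooth over $R_G$, and a deformation functor that is formally smooth over a complete Noetherian local $k$-algebra $B$ and possesses a miniversal hull has as miniversal hull over $B$ a power series ring over $B$. One then checks that this hull is also a miniversal hull for $F$ in the absolute sense --- this is the delicate point, since miniversal (as opposed to universal) hulls classify deformations only non-uniquely --- and hence may be identified with $R_F$. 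All of this is standard and is contained, for instance, in the appendices of \cite{fantechi} (compare the proof of the formal smoothness of \eqref{Eqn: LocToGlo} itself, which already invokes \cite{deligne} and \cite{fantechi}); the only ingredient particular to our setting is the explicit shape of $S_1$ and $S_2$ supplied by Lemma~\ref{Lemma: LocDefRing}.
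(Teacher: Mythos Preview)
Your proposal is correct and follows the same route as the paper: the corollary is stated there without a separate proof, the text simply noting that it follows from Lemma~\ref{Lemma: LocDefRing} together with the formal smoothness of the transformations in Eqn.~\eqref{Eqn: LocToGlo}. Your write-up makes explicit the standard deformation-theoretic principle (formal smoothness of $F\to G$ implies the miniversal hull of $F$ is a power series ring over that of $G$) that the paper leaves implicit, but the argument is the same.
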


\section{Automorphism Groups and Their Actions} \label{Sec: AutGrps}
Automorphism groups appeared in the previous section, where we defined group actions on deformation functors
(Def.~\ref{Def: DeformAction}).  Here we study the structure of these groups with the aim of collecting results to use in \S\ref{Sec: ActionsOnRings}.  There we will study the problem of
lifting the action of an automorphism group on a
deformation functor to an action on a miniversal deformation ring.  The existence of a lift
follows from a theorem of Rim if  the automorphism group is known to be linearly reductive.  Thus, the focus
of this section is on showing that the automorphism groups of interest are linearly reductive.

We begin by studying automorphisms of the node $\node$ (Def.~\ref{Def: Stan-node}) and its unique rank $1$, torsion-free module $I_0$ that fails to be locally free
(Def.~\ref{Def: NotFreeMod}).  The automorphism group $\Aut(X_0,I_0)$ fits into the exact sequence
		\begin{equation} \label{Eqn: LocExactSeq}
		         \begin{CD}		
		         		0 @>>> \Aut(I_0)			@>>>	\Aut(X_0, I_0)			@>>>	\Aut(X_0) @>>> 0,
			\end{CD}
		\end{equation}
and the group $\Aut(I_0)$ admits the following explicit description.

\begin{lemma} \label{Lemma: GlShAut}
	Consider $I_0$ as the normalization $\tilde{\node}$.
	Then the natural action of $\tilde{\node^{\ast}}$ on $I_0$ induces an isomorphism $\tilde{\node^{\ast}} \isoto \Aut(I_0)$.
\end{lemma}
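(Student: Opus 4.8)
The plan is to realize $\Aut(I_0)$ as the unit group of the endomorphism ring $\End_{\node}(I_0)$, and then, under the identification $I_0 = \nodenm$ coming from Definition~\ref{Def: NotFreeMod}(2), to compute $\End_{\node}(\nodenm) = \nodenm$ with $\nodenm$ acting by multiplication. The lemma follows immediately upon passing to unit groups, since the resulting isomorphism $\nodenm^{\ast} \isoto \Aut(I_0)$ is visibly the one induced by the multiplication action of $\nodenm^{\ast}$ on $I_0$.

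So the real content is the claim that the tautological ring homomorphism $\nodenm \to \End_{\node}(\nodenm)$ is an isomorphism. Injectivity is clear, since multiplication by $\alpha$ sends $1 \mapsto \alpha$. For surjectivity I would work inside the total ring of fractions $K := \operatorname{Frac}(\node)$: writing $S$ for the set of non-zerodivisors of $\node$, one has $K = S^{-1}\node$, the reduced ring $\node$ injects into $K$, and $\nodenm \subset K$ (as recalled after Definition~\ref{Def: Stan-node}, $\nodenm = \node[x/(x+y)]$), so every $\beta \in \nodenm$ has the form $\beta = a/s$ with $a \in \node$ and $s \in S$. Given $\phi \in \End_{\node}(\nodenm)$, put $\alpha := \phi(1) \in \nodenm$ and $\psi := \phi - (\text{multiplication by }\alpha)$, an $\node$-linear endomorphism with $\psi(1) = 0$. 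For $\beta = a/s$ as above one computes $s\cdot\psi(\beta) = \psi(s\beta) = \psi(a\cdot 1) = a\cdot\psi(1) = 0$. Now $\nodenm \cong k[[x]]\oplus k[[y]]$ is reduced, and no element of $S$ lies in either of the two minimal primes $(x),(y)$ of $\node$, so the image of $s$ in $\nodenm$ has both components nonzero, hence is a non-zerodivisor; therefore $\psi(\beta) = 0$. As $\beta$ was arbitrary, $\psi = 0$, i.e.\ $\phi$ is multiplication by $\alpha$, which proves surjectivity.

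I do not expect any serious obstacle; the only point requiring care is the bookkeeping around $K$ — namely that $\node$ injects into $K$, that elements of $\nodenm$ admit denominators in $S$, and that elements of $S$ stay non-zerodivisors in $\nodenm$ — all of which are immediate from the explicit presentations $\node = k[[x,y]]/(xy)$ and $\nodenm \cong k[[x]]\oplus k[[y]]$ recorded in and after Definition~\ref{Def: Stan-node}. To close, one records that the composite $\nodenm^{\ast} \isoto \End_{\node}(I_0)^{\ast} = \Aut(I_0)$ is exactly the map induced by the natural multiplication action, which is a matter of unwinding Definition~\ref{Def: NotFreeMod}.
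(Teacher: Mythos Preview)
Your proof is correct and essentially the same as the paper's. Both arguments hinge on the same denominator-clearing trick: given $\phi \in \End_{\node}(\nodenm)$ and $\beta \in \nodenm$ with $s\beta \in \node$ for some non-zerodivisor $s$, one uses $\node$-linearity to control $s\cdot\phi(\beta)$ and then divides by $s$. The paper packages this as ``every $\node$-linear map is $\nodenm$-linear'' (checking commutation with the single generator $x/(x+y)$, using $s = x+y$), while you package it as ``every $\node$-linear map is multiplication by $\phi(1)$''; these are the same statement once one notes $\nodenm$ is free of rank $1$ over itself.
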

\begin{proof}
We claim that every $\node$-linear map $\phi \colon I_0 \to I_0$ is $\tilde{\node}$-linear.  It is enough to
show that $\phi$ commutes with multiplication by $x/(x+y)$, and this is clear: for all $s \in I_0$, we have
$$(x+y) \cdot \phi( x/(x+y) \cdot s) =  \phi( x \cdot  s) =  x \cdot  \phi(s).$$
Dividing by $x+y$, we obtain the desired equality.  Thus, $\Aut(I_0)$ coincides with the group of
$\tilde{\node}$-linear automorphisms, which equals $\tilde{\node^{\ast}}$.
\end{proof}

The action of $\tilde{\calO_0^{\ast}}$ can also be described in terms of the presentation from Definition~\ref{Def: NotFreeMod}.
A typical element  $f \in \tilde{\calO_0^{\ast}}$ can be uniquely written as $f = \alpha \frac{x}{x+y} + \beta \frac{y}{x+y} + g(x,y)$,
with $\alpha, \beta \in k^{\ast}$ and $g(x,y) \in (x,y) \subset \node$, and this element acts by
\begin{equation*}
	e \mapsto (\alpha + g(x,0)) e, \hspace{1cm} f \mapsto (\beta + g(0,y)) f.
\end{equation*}

We now turn to the global picture. Let $I$ be a rank $1$, torsion-free sheaf on a nodal curve $X$.  Set $\Sigma$ equal to the
set of nodes where $I$ fails to be locally free.  In analogy with Eqn.~\eqref{Eqn: LocExactSeq}, $\Aut(X,I)$ fits into the following exact sequence:
		\begin{equation}\label{Eqn: GloExactSeq}
			\begin{CD}
				0 @>>>	\Aut(I)			@>>>	\Aut(X, I)			@>>>	\Aut(X) .
			\end{CD}	
		\end{equation}
We describe $\Aut(X, I)$ by describing the outermost groups.

Consider first $\Aut(X)$.  Without more information, we can only describe the rough features of this group.  For $X$ stable (the main case of interest), $\Aut(X)$ is a finite, reduced
group scheme (\cite[Thm.~1.11]{deligne}), and if we additionally assume that $X$ is general and of genus $g \ge 3$, then this group is trivial.  However, $\Aut(X)$ can be highly non-
trivial for special curves: see \cite{vOV} for a sharp bound on the cardinality of $\Aut(X)$ in terms of the genus $g$, and for a description of the curves attaining the bounds.

The group $\Aut(I)$ admits the following explicit description. In the notation from \S\ref{Subsec: ReduceToNode},  there is a natural map
$\Aut(I) \to \Aut(I_e)$ for every $e \in \Sigma$, and we use this map to describe $\Aut(I)$.

\begin{lemma}\label{Lemma: aut-I}
	Let $X$ be a nodal curve; $I$ a rank $1$, torsion-free sheaf; $\Sigma$ the set of points where $I$
fails to be locally free; and $g \colon X_{\Sigma} \to X$ the map that normalizes the nodes $\Sigma$.
	Then there is a unique isomorphism $H^{0}(X_{\Sigma}, \calO_{X_{\Sigma}}^{\ast}) \cong \Aut(I)$ that extends the inclusion of $H^{0}(X, \calO_{X}^\ast)$ in $\Aut(I)$ and makes
the
	diagram
	\begin{displaymath}
		\xymatrix{
			H^{0}(X_{\Sigma}, \calO_{X_{\Sigma}}^{\ast}) 	\ar[r]^{\cong} \ar[d]&  	\Aut(I) \ar[d]\\
			\tilde{\calO}_{X,e}^{\ast}	\ar[r]^{\cong}	&			\Aut(I_{e})
		}
	\end{displaymath}
	commute for all $e \in \Sigma$.  Here $\tilde{\calO}_{X,e}$ is the normalization of the completed local ring at $e$, the horizontal maps are isomorphisms, and  the vertical maps
are restrictions.
\end{lemma}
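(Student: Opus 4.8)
The idea is to compute the \emph{sheaf} $\underline{\End}_{\calO_X}(I)$ first, and only pass to global sections at the end. So the plan is: (1) identify $\underline{\End}_{\calO_X}(I)$ with $f_*\calO_{X_\Sigma}$ canonically as sheaves of $\calO_X$-algebras; (2) take sheaves of units, using that $f$ is finite; (3) take global sections; (4) read off the compatibilities and uniqueness. Recall that $f\colon X_\Sigma\to X$ is an isomorphism over $X\setminus\Sigma$, and that for each $e\in\Sigma$ the ring $\tilde\calO_{X,e}$ is exactly the product of the completed local rings of $X_\Sigma$ at the two points of $f^{-1}(e)$, i.e.\ $\tilde\calO_{X,e}=\widehat{(f_*\calO_{X_\Sigma})_e}$.

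For step (1): over $X\setminus\Sigma$ the sheaf $I$ is a line bundle and $f$ is an isomorphism, so $\underline{\End}_{\calO_X}(I)|_{X\setminus\Sigma}=\calO_{X\setminus\Sigma}=(f_*\calO_{X_\Sigma})|_{X\setminus\Sigma}$ canonically. At a node $e\in\Sigma$, passing to completions $(\widehat{\calO}_{X,e},\widehat I_e)\cong(\node,I_0)$ (and using that $\underline{\mathrm{Hom}}$ of coherent sheaves commutes with the flat base change $\calO_{X,e}\to\widehat{\calO}_{X,e}$), the argument in the proof of Lemma~\ref{Lemma: GlShAut} shows that every $\node$-linear endomorphism of $I_0$ is automatically $\nodenm$-linear; hence $\End_{\node}(I_0)=\End_{\nodenm}(\nodenm)=\nodenm=\tilde\calO_{X,e}=\widehat{(f_*\calO_{X_\Sigma})_e}$. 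These canonical local identifications agree on punctured neighbourhoods, so by faithfully flat descent of isomorphisms of coherent sheaves they glue to a canonical isomorphism $\underline{\End}_{\calO_X}(I)\isoto f_*\calO_{X_\Sigma}$ of $\calO_X$-algebras. For step (2), since $f$ is finite a section of $f_*\calO_{X_\Sigma}$ is a unit exactly when it is a unit in each of the finitely many local rings over a given point, so $(f_*\calO_{X_\Sigma})^\ast=f_*(\calO_{X_\Sigma}^\ast)$, whence $\underline{\Aut}(I)=\underline{\End}_{\calO_X}(I)^\ast\isoto f_*(\calO_{X_\Sigma}^\ast)$. Taking global sections in step (3) gives $\Aut(I)=H^0(X,\underline{\Aut}(I))\isoto H^0(X,f_*\calO_{X_\Sigma}^\ast)=H^0(X_\Sigma,\calO_{X_\Sigma}^\ast)$, which is the asserted isomorphism.

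For step (4): the inclusion $H^0(X,\calO_X^\ast)\hookrightarrow\Aut(I)$ is multiplication by a global unit of $\calO_X$, which under $\underline{\End}_{\calO_X}(I)=f_*\calO_{X_\Sigma}$ corresponds to the structure map $\calO_X\to f_*\calO_{X_\Sigma}$; on global units this is pullback along $f$, so the isomorphism extends the given inclusion. Similarly, $\Aut(I)\to\Aut(I_e)$ is ``restrict and complete at $e$'', which under our identifications becomes $H^0(X_\Sigma,\calO_{X_\Sigma}^\ast)\to\widehat{(f_*\calO_{X_\Sigma})_e}^\ast=\tilde\calO_{X,e}^\ast$, i.e.\ the vertical restriction map, and the identification $\tilde\calO_{X,e}^\ast=\Aut(I_e)$ in play is the one from Lemma~\ref{Lemma: GlShAut}; so the square commutes. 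For uniqueness: when $\Sigma=\emptyset$ the statement is trivial (then $I$ is a line bundle, $X_\Sigma=X$, and condition on extending $H^0(X,\calO_X^\ast)$ forces the identity), and when $\Sigma\neq\emptyset$ every connected component of $X_\Sigma$ contains a point of $f^{-1}(\Sigma)$ because $X$ is connected, so $\Aut(I)=\prod_C H^0(C,\calO_C^\ast)$ embeds into $\prod_{e\in\Sigma}\Aut(I_e)$ via the constant terms; hence the commutativity of the squares already determines the isomorphism. The step I expect to require the most care is step (1) — in particular the bookkeeping that lets the completed-stalk computation of Lemma~\ref{Lemma: GlShAut} be upgraded to an honest sheaf isomorphism over $X$ (commutation of $\underline{\mathrm{Hom}}$ with completion for coherent sheaves, and descent of the isomorphism from the étale/faithfully flat cover given by the completions together with $X\setminus\Sigma$).
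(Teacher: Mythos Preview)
Your proof is correct and follows the same overall strategy as the paper: both arguments reduce to showing $\underline{\End}_{\calO_X}(I)\cong f_*\calO_{X_\Sigma}$ as sheaves of $\calO_X$-algebras, then take units and global sections. The compatibility and uniqueness checks you supply in step~(4) are more detailed than what the paper writes (the paper simply says ``the result follows by taking global sections and passing to units''), and your argument there is fine.

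The one place where the two proofs differ in technique is exactly your step~(1). You compute $\underline{\End}(I)$ on the open set $X\setminus\Sigma$ and on the completed stalks at $\Sigma$, and then invoke fpqc descent to glue. This works, but requires verifying agreement on punctured formal neighbourhoods. The paper avoids this bookkeeping by producing a \emph{global} map from the start: since $I$ is torsion-free of rank~$1$, $\underline{\End}(I)$ embeds into $\operatorname{Frac}(\calO_X)$, and Cayley--Hamilton shows every local section is integral over $\calO_X$, so $\calO_X\subset\underline{\End}(I)\subset g_*\calO_{\tilde X}$ with $g$ the full normalization. Now $\underline{\End}(I)$ and $f_*\calO_{X_\Sigma}$ are both coherent \emph{subsheaves} of the same sheaf $g_*\calO_{\tilde X}$, so equality can be checked on completed stalks without any gluing---and that local check is again Lemma~\ref{Lemma: GlShAut}. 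The Cayley--Hamilton/integrality step buys you the global containment, trading your descent argument for a short commutative-algebra observation.
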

\begin{proof}
Given $I$, we prove the stronger statement that
$\underline{\End}(I)$ is canonically isomorphic to $g_{*}(\calO_{X_{\Sigma}})$.  Because $I$ is torsion-free, $\underline{\End}(I)$ injects into $\underline{\End}(I \otimes
\operatorname{Frac}(\calO_{X}))$, which equals $\operatorname{Frac}(\calO_{X})$ as $I$ is rank $1$.  Thus, $\underline{\End}(I)$  is a finitely generated, commutative $\calO_X$-
algebra satisfying $\calO_{X} \subset \underline{\End}(I) \subset \operatorname{Frac}(\calO_{X})$.   Furthermore, an application of the Cayley-Hamilton Theorem shows that a  local
section of  $\underline{\End}(I)$ satisfies a monic equation whose coefficients are local sections of $\calO_{X}$.  We  may conclude that $\underline{\End}(I) \subset \nu_{*}
(\calO_{\tilde{X}})$, where $\nu \colon \tilde{X} \to X$ is the (full)  normalization. To complete the proof, it is enough to show that the support of $\nu_{*}(\calO_{\tilde{X}})/\underline{\End}
(I)$ is precisely   $\Sigma$.  However, this can be checked on the level of completed stalks, and so we may deduce the claim from the Lemma~\ref{Lemma: GlShAut}.   The result now
follows by taking global sections of $\underline{\End}(I)$ and passing to units.
\end{proof}

One consequence of the previous two lemmas is that many of the groups appearing in this paper are linearly reductive.
Recall that the ground field $k$ may have positive characteristic, and in positive characteristic linear reductivity is a strong condition to impose.  Indeed, while many algebraic groups
(e.g.~$\operatorname{GL}_{r}, \operatorname{SL}_{r}, \dots$) are linearly reductive in characteristic $0$, Nagata has shown that the only linearly reductive groups
in characteristic $p>0$ are  the groups $G$ whose identity component $G_0$ is a multiplicative torus
and whose \'{e}tale quotient $G/G_0$ has prime-to-$p$ order.  We now list the groups we have shown satisfy this
condition.
 
\begin{cor} \label{Cor: GrpRLinRed}
	Let $X$ be a nodal curve and $I$  a rank $1$, torsion-free sheaf.  Then the following groups are reduced and linearly reductive:
	\begin{itemize}
		\item the automorphism group $\Aut(I)$;
		\item the quotient group $\Aut(I_0)/(1+(x,y)\node)$;
		\item the automorphism group $\Aut(X,I)$ when $X$ is stable and does not admit an order $p={\rm char}(k)$ automorphism.	
	\end{itemize}
\end{cor}
\begin{proof}
Lemma~\ref{Lemma: GlShAut} shows $\Aut(I_0)/(1+(x,y) \node)$ is a multiplicative torus, and
Lemma~\ref{Lemma: aut-I} shows the same is true for $\Aut(I)$.  Given this, an inspection of Eqn.~\eqref{Eqn: GloExactSeq} proves that $\Aut(X,I)$ is linearly reductive.
\end{proof}

\section{Group Actions on Rings} \label{Sec: ActionsOnRings}
In this section we show that, in the cases of interest, the actions on deformation functors from Definition~\ref{Def: DeformAction}
lift to unique actions on miniversal deformation rings (Fact~\ref{Fact: Rim}), which we then compute (Thm.~\ref{Thm: MainActionThm}). These results are used in \S\ref{Sec: Luna},
where we show that the action on the miniversal deformation
ring can be described using the GIT construction of the compactified Jacobian (Lemma~\ref{Lemma: SliceToRing},
Lemma~\ref{Lemma: StabToAut}).  We then use this observation to deduce the main theorem of the paper
(Thm.~\ref{Thm: MainLocStr}).  Key to this section are the linear reductivity results from the previous section.

We begin by showing that certain actions are trivial.
\begin{lemma}\label{lem61}
	The action of $1 + (x,y) \node \subset \Aut(I_0)$ on $\Def_{I_0}$ is trivial.
\end{lemma}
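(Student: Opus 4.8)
The plan is to verify that each $\tau\in 1+(x,y)\node$ induces the identity self-map of $\Def_{I_0}(A)$ for every $A\in\art$. Since an action of a group on a functor is trivial precisely when every element acts as the identity transformation, this is what needs to be shown.

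The observation that makes this easy is that such a $\tau$ acts on $I_0$ by nothing more than multiplication by a unit of the ring $\node$. Writing $\tau=1+g$ with $g\in(x,y)\node$, the element $1+g$ is a unit of $\node$ (its image in the residue field is $1$), and by the explicit description of the $\tilde{\calO_0^{\ast}}$-action given just after Lemma~\ref{Lemma: GlShAut} it acts on $I_0=\langle e,f:ye=xf=0\rangle$ by $e\mapsto(1+g)e$, $f\mapsto(1+g)f$, i.e.\ as multiplication by $1+g\in\node^{\ast}$. Equivalently, under the inclusion $\node^{\ast}\hookrightarrow\tilde{\calO_0^{\ast}}=\Aut(I_0)$ the subgroup $1+(x,y)\node$ is exactly the image of the group of units of $\node$ congruent to $1$.

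Given this, I would argue directly on an arbitrary deformation. Let $(F_A,j)$ be a deformation of $I_0$ over $A$, so that $F_A$ is a finitely presented, $A$-flat module over $\node\otimes_k A$ and $j\colon i_*(F_A\otimes_A k)\isoto I_0$ (with $i=i_{\mathrm{can}}$). Lifting $1+g$ along the ring map $\node\hookrightarrow\node\otimes_k A$, $h\mapsto h\otimes 1$, produces a unit $1+g\otimes 1$ of $\node\otimes_k A$ (again its residue is $1$), and multiplication by $(1+g\otimes 1)^{-1}$ is an $\node\otimes_k A$-linear automorphism $\psi$ of $F_A$; it is in particular $A$-linear, and its reduction $\psi\otimes_A k$ is multiplication by $(1+g)^{-1}$ on $F_A\otimes_A k$. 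Since $j$ is $\node$-linear it commutes with multiplication by ring elements, so $j\circ(\psi\otimes_A k)=\bigl(\text{mult.\ by }(1+g)^{-1}\bigr)\circ j=\tau^{-1}\circ j$. Checking this against the compatibility square \eqref{Eqn: DefComm2} (whose bottom map is $\Id$, with $i=i'=i_{\mathrm{can}}$ and target deformation $(F_A,\tau\circ j)$): the square demands $(\tau\circ j)\circ i_*(\psi\otimes 1)=j$, and indeed $(\tau\circ j)\circ(\psi\otimes_A k)=\tau\circ(\tau^{-1}\circ j)=j$. Hence $(\mathrm{id},\psi)$ is an isomorphism of deformations $(F_A,j)\isoto(F_A,\tau\cdot j)$. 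As $(F_A,j)$ and $A$ were arbitrary, $\tau$ acts as the identity on $\Def_{I_0}$, and as $\tau$ was arbitrary in $1+(x,y)\node$ the lemma follows.

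There is essentially no hard step: the content is the remark, read off from Lemma~\ref{Lemma: GlShAut}, that these automorphisms are multiplication by units of the base ring and therefore lift tautologically along any deformation of the base. The only care needed is bookkeeping in the compatibility diagram \eqref{Eqn: DefComm2}, together with the harmless point that $\Aut(I_0)=\tilde{\calO_0^{\ast}}$ is abelian, so transporting the automorphism $\psi$ through $j$ raises no conjugation issues.
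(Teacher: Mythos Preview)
Your proof is correct and follows essentially the same approach as the paper: both observe that $\tau=1+g\in 1+(x,y)\node$ is a unit of $\node$ itself (not just of $\tilde{\node}$), lift it to the unit $\tau\otimes 1\in\node\otimes_k A$, and use multiplication by this unit (or its inverse) as the isomorphism of deformations. Your write-up is more detailed in verifying the compatibility diagram~\eqref{Eqn: DefComm2}, but the content is the same.
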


\begin{proof}
	Suppose we are given  $A \in\art$ and a deformation $(I_A, j)$ of $I_0$ over $A$.  Given $\tau \in 1 + (x, y) \node$, we must show that $(I_{A}, j)$ and $(I_{A}, \tau^{-1} \circ j)$
are isomorphic deformations.  But this is clear: $\tau$ lies in $\node$, and multiplication by
$\tau \otimes 1 \in \node \otimes_{k} A$ defines an isomorphism $(I_{A}, j) \isoto (I_{A}, \tau^{-1} \circ j)$.
\end{proof}
Essentially the same argument proves the following two lemmas.

\begin{lemma} \label{Lemma: ScalarsRTrivial}
	Let $X$ be a nodal curve and $I$ a rank $1$, torsion-free sheaf on $X$.  Then the subgroup $\bbG_m \subset \Aut(I)$ of scalar automorphisms
	acts trivially on $\Def_{I}$.  Under the inclusion \eqref{Eqn: GloExactSeq}, $\bbG_m$ also acts trivially on $\Def_{(X,I)}$.
\end{lemma}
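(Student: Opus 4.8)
The plan is to run, essentially verbatim, the argument used for Lemma~\ref{lem61}. The key observation is that the subgroup $\bbG_m\subset\Aut(I)$ under consideration is the image of $H^0(X,\calO_X^\ast)$ in $\Aut(I)$ (which is all of $k^\ast$, since $X$ is connected), so a point $c\in\bbG_m$ acts on $I$ by multiplication by the constant scalar $c$; in particular $c$ is a \emph{global unit of $\calO_X$}, and that is precisely the feature of the element ``$\tau\in 1+(x,y)\node$'' that made the proof of Lemma~\ref{lem61} work.

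First I would fix $A\in\art$ and a deformation $(I_A,j)$ of $I$ over $A$, so that $I_A$ is an $\calO_{X\otimes_k A}$-module and $j\colon i_{\text{can},*}(I_A\otimes_A k)\isoto I$. Unwinding Definition~\ref{Def: DeformAction}, the element $c\in\bbG_m$ carries $(I_A,j)$ to $(I_A,c\circ j)$, where $c\circ j$ is the trivialization followed by multiplication by $c$ on $I$. To show these represent the same class in $\Def_I(A)$, I would take $\psi\colon I_A\isoto I_A$ to be multiplication by $c^{-1}\otimes 1$, which is an automorphism of $\calO_{X\otimes_k A}$-modules because $c^{-1}\otimes 1$ is a unit of $\calO_{X\otimes_k A}$. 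Since $\psi\otimes_A k$ is multiplication by $c^{-1}$ on $I$, the pair $(\Id,\psi)$ is an isomorphism of deformations $(I_A,j)\isoto(I_A,c\circ j)$: the diagram \eqref{Eqn: DefComm1} commutes trivially, and \eqref{Eqn: DefComm2} reduces to the scalar identity $(c\circ j)\circ i_{\text{can},*}(\psi\otimes_A k)=j$. Hence $\bbG_m$ acts trivially on $\Def_I$.

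For $\Def_{(X,I)}$, the image of $\bbG_m$ under the inclusion \eqref{Eqn: GloExactSeq} consists of the pairs $(\Id_X,c)$ with $c\in k^\ast$. Given a deformation $(X_A,I_A,i,j)$ --- where now $X_A$ need not be the trivial deformation of $X$ --- Definition~\ref{Def: DeformAction} sends it to $(X_A,I_A,i,c\circ j)$. Because the scheme part of the automorphism is the identity, $X_A$ is untouched, and I would again take $\phi=\Id_{X_A}$ together with $\psi$ equal to multiplication by $c^{\pm 1}$ (a unit of $\calO_{X_A}$, since $\calO_{X_A}$ is a $k$-algebra). The compatibility diagrams \eqref{Eqn: DefComm1} and \eqref{Eqn: DefComm2} then commute for exactly the same reason as above, so $(\Id_X,c)$ acts trivially on $\Def_{(X,I)}$.

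There is no substantive obstacle here; the only thing requiring attention is carefully matching the direction of the isomorphism of deformations with the formula for the action in Definition~\ref{Def: DeformAction}, so that the scalar $c$ (and not $c^{-1}$, or $c^{2}$) appears where it should in the compatibility with the trivializations. This is pure bookkeeping.
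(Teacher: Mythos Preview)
Your proposal is correct and follows essentially the same approach as the paper: extend the scalar $c\in k^\ast\subset\calO_{X_A}^\ast$ to an automorphism of $I_A$ and observe that this gives an isomorphism between $(I_A,j)$ and its translate under $c$. The paper's proof is terser (it phrases the isomorphism as going from $(I_A,j)$ to $(I_A,\tau^{-1}\circ j)$ via $\tilde\tau$, and leaves the $\Def_{(X,I)}$ case to the reader), but the content is identical.
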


\begin{proof}
We give a proof for $\Def_I$; the case of $\Def_{(X,I)}$ is similar, and left to the reader.  	If $(I_{A}, j)$ is a deformation of $I$ and
	$\tau \in \bbG_m \subset \Aut(I)$ a scalar automorphism, then $\tau$ trivially extends to
	an automorphism $\tilde{\tau}$ of $I_{A}$ that defines an isomorphism of $(I_{A}, j)$ with
	$(I_{A}, \tau^{-1} \circ j)$.
\end{proof}

\begin{lemma} \label{Lemma: LocTrivRTriv}
	Let $X$ be a nodal curve and $I$ a rank $1$, torsion-free sheaf.  Then $\Aut(I)$ acts trivially on the
	subfunctor $\Def_{I}^{{\rm l.t.}} \subset \Def_{I}$. Under the inclusion \eqref{Eqn: GloExactSeq}, $\Aut(I)$ also acts trivially on the subfunctor  $\Def^{{\rm l.t.}}_{(X,I)}\subset
\Def_{(X,I)}$.
\end{lemma}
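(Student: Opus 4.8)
The plan is to imitate the proof of Lemma~\ref{lem61}: for each $\tau\in\Aut(I)$ and each locally trivial deformation I would exhibit an explicit $\calO$-linear automorphism of the deforming sheaf that realizes the isomorphism of deformations required for triviality of the action. The one new input over Lemma~\ref{lem61} (where $\tau$ lies in $\node$ and lifts tautologically) is that here $\tau$ must be lifted across the deformation, and this is where local triviality enters.

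First I would pin down $\Aut(I)$. By Lemma~\ref{Lemma: aut-I}, $\underline{\End}(I)=f_\ast(\calO_{X_\Sigma})$, where $f\colon X_\Sigma\to X$ resolves the nodes in $\Sigma$, and $\tau\in\Aut(I)$ is multiplication by a global unit of this sheaf of algebras; since $X_\Sigma$ is proper and reduced, writing $Y_1,\dots,Y_c$ for its connected components we get $\Aut(I)=H^0(X_\Sigma,\calO_{X_\Sigma}^\ast)=(k^\ast)^c$, so $\tau$ is the section $\lambda=\sum_j\lambda_j e_j$ of $\underline{\End}(I)$ with $\lambda_j\in k^\ast$ and $e_j$ the idempotent supported on $Y_j$ (compare the proof of Corollary~\ref{Cor: GrpRLinRed}). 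Next I would lift $\tau$. Let $(I_A,j)$ be a locally trivial deformation of $I$ over $A\in\art$ (in the case of $\Def_{(X,I)}^{\text{l.t.}}$, a locally trivial deformation $(X_A,I_A,i,j)$ of the pair). By Lemma~\ref{Lemma: LocTriv} applied to $X_A$ — normalizing along the constant sections over $\Sigma$, which is legitimate precisely because the deformation is trivial near those nodes — one has $I_A=f_{A\ast}(L_A)$ for a line bundle $L_A$ on the partial normalization $X_{\Sigma,A}$ of $X_A$; as $X_{\Sigma,A}$ has the same $c$ connected components as $X_\Sigma$ (a nilpotent thickening does not change the topological space), $L_A$ splits as $\bigsqcup_j L_{A,j}$. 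Multiplication by $\lambda_j\in k^\ast\subset A^\ast$ on $L_{A,j}$ is an $\calO$-linear automorphism, and pushing these forward and assembling gives an $\calO$-linear automorphism $\tilde\tau$ of $I_A$ whose reduction modulo the maximal ideal of $A$ is identified with $\tau$ via $j$. (Equivalently: for a locally trivial deformation $\underline{\End}(I_A)$ is $A$-flat with special fibre $f_\ast\calO_{X_\Sigma}$ — away from $\Sigma$ the sheaf $I_A$ is invertible, and near a node of $\Sigma$ the deformation is trivial, so $\underline{\End}_{\node\otimes_k A}(I_0\otimes_k A)=\nodenm\otimes_k A$ — whence the $e_j$ lift to orthogonal idempotents $\tilde e_j\in\Gamma(\underline{\End}(I_A))$ and $\tilde\tau:=\sum_j\lambda_j\tilde e_j$ is the desired unit.)

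Finally I would conclude exactly as in Lemma~\ref{lem61}. By Definition~\ref{Def: DeformAction}, $\tau$ sends $(I_A,j)$ to $(I_A,\tau\circ j)$ and, in the pair case, sends $(X_A,I_A,i,j)$ to $(X_A,I_A,i,\tau\circ j)$. The pair $(\mathrm{id}_{X_A},\tilde\tau^{-1})$ is then an isomorphism of deformations from $(I_A,j)$ to $(I_A,\tau\circ j)$ (resp.\ of the corresponding pair deformations): diagram~\eqref{Eqn: DefComm1} commutes trivially since the curve component is the identity, and $\tilde\tau^{-1}$ reduces to $\tau^{-1}$, which is exactly what is needed for \eqref{Eqn: DefComm2} to commute. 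Hence $\tau$ acts as the identity on $\Def_I^{\text{l.t.}}\subseteq\Def_I$, and via the inclusion \eqref{Eqn: GloExactSeq} on $\Def_{(X,I)}^{\text{l.t.}}\subseteq\Def_{(X,I)}$, which is the assertion. The only step that is not purely formal is the construction of $\tilde\tau$ — i.e.\ the fact that over a locally trivial deformation the deforming sheaf decomposes compatibly with the connected components of $X_\Sigma$ — which follows from Lemma~\ref{Lemma: LocTriv} (and its evident $A$-relative version) or from the base-change behaviour of $\underline{\End}(I_A)$; this is the point at which the local triviality hypothesis is used, and for $\Def_I^{\text{l.t.}}$ alone one may instead simply combine Lemma~\ref{Lemma: LocTriv}, Lemma~\ref{Lemma: aut-I}, and a component-wise application of Lemma~\ref{Lemma: ScalarsRTrivial}.
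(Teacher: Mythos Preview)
Your argument is correct and follows essentially the same route as the paper, which simply records that the lemma is a consequence of Lemmas~\ref{Lemma: GlShAut} and~\ref{Lemma: LocTriv}; you have spelled out in detail how those ingredients combine (identifying $\Aut(I)$ with componentwise scalars on $X_\Sigma$, using the pushforward description of locally trivial deformations to lift $\tau$, and then concluding as in Lemma~\ref{lem61}). The only point worth noting is that for the pair case you invoke an $A$-relative version of Lemma~\ref{Lemma: LocTriv}, which is indeed the natural extension but is not literally what is stated; this is harmless, and your alternative via flatness of $\underline{\End}(I_A)$ is an equally clean way to produce the lift $\tilde\tau$.
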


\begin{proof}
The lemma is a consequence of Lemmas~\ref{Lemma: GlShAut} and \ref{Lemma: LocTriv}.
\end{proof}

We may now invoke a theorem of Rim to show that the actions uniquely lift to actions
on miniversal deformation rings.

\begin{fact}[Rim \cite{Rim}] \label{Fact: Rim}
	Let $X$ be a nodal curve and $I$ a rank $1$, torsion-free sheaf.  Then:
	\begin{enumerate}[(i)]
	\item  there is a unique action of $\Aut(I_0)$ on the miniversal
	 deformation ring $S_1$ (resp. $S_2$)
	that makes the map $\h[S_1] \to  \Def_{I_0}$ (resp. $\h[S_2] \to  \Def_{(\node,I_0)}$) 	 equivariant and has the property that the subgroup $1 + (x,y)\node \subset
\tilde{\node^{\ast}} = \Aut(I_0)$ acts
trivially;
	\item there is a unique action of $\Aut(I)$ on the miniversal deformation ring $R_1$ of $\Def_I$ that makes $\h[R_1] \to \Def_{I}$ equivariant;
	\item there is a unique action of $\Aut(X,I)$ on the miniversal deformation ring $R_2$ of $\Def_{(X,I)}$ that makes
$\h[R_2] \to \Def_{(X,I)}$ equivariant, provided that $X$ is stable and it does not admit an order $p={\rm char}(k)$ automorphism.
	\end{enumerate}
\end{fact}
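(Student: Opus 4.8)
The plan is to deduce all three statements from a single general principle: if a linearly reductive group $G$ acts on a deformation functor $\Def$ that admits a miniversal deformation ring $R$, then the action lifts uniquely (up to the usual subtleties) to an action of $G$ on $R$ making $\h[R]\to\Def$ equivariant. This is precisely the content of Rim's equivariant miniversal deformation theorem \cite{Rim}. So the real work is to check, in each of the three cases, that (a) the relevant group is linearly reductive and (b) the hypotheses of Rim's theorem are met, and to pin down the extra normalization in item (i).

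First I would dispose of items (ii) and (iii) directly. For (ii), Corollary~\ref{Cor: GrpRLinRed} tells us $\Aut(I)$ is reduced and linearly reductive (it is a multiplicative torus by Lemma~\ref{Lemma: aut-I}). The functor $\Def_I$ has $R_1$ as a miniversal deformation ring by the discussion following Definition~\ref{Def: DeformFunc} and Corollary~\ref{Cor: RingDesc}, and the natural action of $\Aut(I)$ on $\Def_I$ from Definition~\ref{Def: DeformAction} is by functor automorphisms fixing the base point. Rim's theorem then produces a unique $\Aut(I)$-action on $R_1$ making $\h[R_1]\to\Def_I$ equivariant. For (iii), the hypothesis that $X$ is stable and admits no order-$p$ automorphism is exactly what Corollary~\ref{Cor: GrpRLinRed} needs to conclude that $\Aut(X,I)$ is reduced and linearly reductive; $R_2$ is miniversal for $\Def_{(X,I)}$, and again Rim's theorem applies verbatim to yield the unique equivariant action.

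Item (i) is slightly different because $\Aut(I_0)=\widetilde{\node^\ast}$ is \emph{not} linearly reductive — it is an extension of the torus $\widetilde{\node^\ast}/(1+(x,y)\node)$ by the unipotent-type group $1+(x,y)\node$. So I would factor the problem through that torus quotient. By Lemma~\ref{lem61} the subgroup $1+(x,y)\node$ acts trivially on $\Def_{I_0}$, so the action of $\Aut(I_0)$ on $\Def_{I_0}$ descends to an action of the torus $\Aut(I_0)/(1+(x,y)\node)$, which \emph{is} linearly reductive by Corollary~\ref{Cor: GrpRLinRed}. Rim's theorem gives a unique equivariant action of this torus on the miniversal ring $S_1$, and I inflate it along $\Aut(I_0)\twoheadrightarrow\Aut(I_0)/(1+(x,y)\node)$ to get an action of $\Aut(I_0)$ on $S_1$ in which, by construction, $1+(x,y)\node$ acts trivially. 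Uniqueness of an action with this last property follows because any such action must, for the same reason, descend to the torus quotient, where Rim's uniqueness applies. The same argument works word for word with $\Def_{(\node,I_0)}$ and $S_2$ in place of $\Def_{I_0}$ and $S_1$, using that $1+(x,y)\node\subset\Aut(I_0)\subset\Aut(\node,I_0)$ acts trivially on $\Def_{(\node,I_0)}$ (the proof of Lemma~\ref{lem61} applies unchanged, since the automorphism it constructs is the identity on the curve).

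The main obstacle is the non-reductivity of $\Aut(I_0)$ in item (i): one cannot apply Rim's theorem to $\Aut(I_0)$ directly, and the normalization condition that $1+(x,y)\node$ act trivially is not automatic — it is exactly the device that restores uniqueness after passing to the reductive torus quotient. I would therefore spend most of the care there: verifying that the trivial-action lemmas are strong enough to make the descent to the torus quotient well-defined on the nose (not merely up to isomorphism of functors), and that inflating back up does not reintroduce ambiguity. Everything else is a routine invocation of Corollary~\ref{Cor: GrpRLinRed} together with Rim's theorem.
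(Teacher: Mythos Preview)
Your proposal is correct and follows essentially the same approach as the paper: invoke Rim's theorem directly for items (ii) and (iii) via Corollary~\ref{Cor: GrpRLinRed}, and for item (i) descend to the linearly reductive torus quotient $\Aut(I_0)/(1+(x,y)\node)$ using Lemma~\ref{lem61}, apply Rim there, and inflate back. The one point the paper makes explicit that you leave implicit is that Rim's theorem requires the group action to be defined on the underlying fibered category in groupoids, not merely on the deformation functor; you should flag that the actions of Definition~\ref{Def: DeformAction} are visibly of this form.
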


\begin{proof}
	This is a special case of \cite[p.~225]{Rim}.  Indeed, the functors $\Def_{I_0}$, $\Def_{I}$, $\Def_{(\node, I_0)}$ and
	$\Def_{(X, I)}$ are all examples of a deformation functor $F$ associated to a ``homogeneous fibered category in groupoid" satisfying a finiteness condition.  Given an action of a
linearly reductive group on such
	a category, there is an induced action on $F$, and Rim's Theorem asserts that
there exists a miniversal deformation ring $R$ that admits an action of $G$ making  $\h[R] \to F$ equivariant.  Furthermore,
as an algebra with $G$-action, $R$ is unique up to a (non-unique) isomorphism.

One may verify that the actions on $\Def_{I_0}$, $\Def_{I}$, $\Def_{(\node, I_0)}$ and $\Def_{(X,I)}$ are defined on the level of groupoids.  The claims concerning $R_1$
and $R_2$ follows immediately  because we have shown that $\Aut(I)$ and $\Aut(X,I)$ are linearly reductive.  The group
$\Aut(I_0)$ is certainly not linearly reductive, but Lemma~\ref{lem61} asserts that this group acts through its linearly reductive quotient $\Aut(I_0)/(1+(x,y)\node)$.  Case (i) then follows
as well.
\end{proof}

The actions described by the Fact~\ref{Fact: Rim} are, of course, unique only up to a non-unique isomorphism.    Because of the non-uniqueness, it is not immediate that the
group action is functorial.  This issue is addressed in the lemma below.

\begin{lemma} \label{Lemma: FunctorialAction}
	Let $X$ be a nodal curve and $I$ a rank $1$, torsion-free sheaf.  For every point $e \in X$ where
$I$ fails to be locally free, fix an isomorphism between $(\widehat{\calO}_{X,e}, I \otimes \widehat{\calO}_{X,e})$ and $(\node, I_0)$.
	Then the restriction transformations
\begin{equation}
	\Def_{I} \to \prod_{e \in \Sigma} \Def_{I_0}  \hspace{0.5cm} \text{ resp. } \hspace{0.5cm} \Def_{(X,I)} \to \prod_{e \in \Sigma} \Def_{(\node, I_0)}   
\end{equation}
lift to transformations of miniversal deformation rings
\begin{equation} \label{Eqn: FunActLocGlo}
	\h[R_1] \to \prod_{e \in \Sigma} \h[S_1] \hspace{0.5cm} \text{ resp. } \hspace{0.5cm}  \h[R_2] \to \prod_{e \in \Sigma} \h[S_2] \notag
\end{equation}
that are equivariant with respect to the homomorphism
\begin{equation}
	\Aut(I)\to \prod_{e \in \Sigma}\Aut(I_0)
\end{equation}
and the actions of $\Aut(I)$ and $\Aut(I_0)$ described in Fact~\ref{Fact: Rim}.
\end{lemma}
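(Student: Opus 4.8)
The plan is to prove the lemma by carrying out the inductive argument behind Fact~\ref{Fact: Rim} (Rim's theorem) in a \emph{relative} setting, so that the liftings of the restriction transformations already produced by Corollary~\ref{Cor: RingDesc} can be arranged to be equivariant. First I would fix notation: write $B_1 := \widehat{\bigotimes}_{e \in \Sigma} S_1$ and $B_2 := \widehat{\bigotimes}_{e \in \Sigma} S_2$, so that $\h[B_1] = \prod_{e \in \Sigma}\h[S_1]$ and $\h[B_2] = \prod_{e \in \Sigma}\h[S_2]$. Using Fact~\ref{Fact: Rim}(i), fix once and for all an action of $\Aut(I_0)$ on $S_1$ (resp. $S_2$) together with a compatible equivariant versal family; taking products gives a $\prod_{e \in \Sigma}\Aut(I_0)$-equivariant, miniversal family $\pi_{B_j} \colon \h[B_j] \to \prod_{e}\Def_{I_0}$ (resp. $\prod_{e}\Def_{(\node, I_0)}$). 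By Corollary~\ref{Cor: RingDesc} we may take $R_1 = B_1[[W_1, \dots, W_{N_1}]]$ and $R_2 = B_2[[W_1, \dots, W_{N_2}]]$ for suitable $N_j$, with $\iota_j \colon B_j \hookrightarrow R_j$ the standard inclusions; these are precisely the maps induced by the forgetful transformations of Eqn.~\eqref{Eqn: LocToGlo}, so the dual maps $\iota_j^{\ast} \colon \h[R_j] \to \h[B_j]$ lift the two restriction transformations for the corresponding choices of versal family. Hence the entire content of the lemma is that the $\Aut(I)$-action on $R_1$ of Fact~\ref{Fact: Rim}(ii), and the $\Aut(X,I)$-action on $R_2$ of Fact~\ref{Fact: Rim}(iii), can be chosen so that $\iota_1$ is equivariant for $\Aut(I) \to \prod_{e}\Aut(I_0)$, and $\iota_2$ is equivariant for the same homomorphism, with $\Aut(I)$ acting on $R_2$ through $\Aut(I) \hookrightarrow \Aut(X,I)$ (sequence~\eqref{Eqn: GloExactSeq}) and on $B_2$ factorwise via Lemma~\ref{Lemma: aut-I}; note that elements of $\Aut(I)$ have trivial image in $\Aut(X)$ and so fix every node of $\Sigma$, so no permutation of the factors $\h[S_j]$ occurs.

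The key step is to construct, both for $(\Def, R) = (\Def_I, R_1)$ and for $(\Def, R) = (\Def_{(X,I)}, R_2)$, an $\Aut(I)$-equivariant versal family $\h[R] \to \Def$ for which $\iota_j$ is equivariant, where $\Aut(I)$ acts on $R = B_j[[W_1, \dots, W_{N_j}]]$ factorwise on $B_j$ (via $\Aut(I) \to \prod_e\Aut(I_0)$ and the fixed actions on the $S_j$) and \emph{trivially} on the variables $W_i$. That the trivial action on the $W_i$ is the correct choice is precisely the assertion that $\Aut(I)$ acts trivially on the locally trivial subfunctor $\Def^{\text{l.t.}}_I$ (resp. $\Def^{\text{l.t.}}_{(X,I)}$) of Lemma~\ref{Lemma: LocTrivRTriv}, whose deformation ring is the power series ring $k[[W_1, \dots, W_{N_j}]]$. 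One then builds the compatible versal family by the order-by-order procedure of Fact~\ref{Fact: Rim}: at the tangent level, linear reductivity of $\Aut(I)$ (Corollary~\ref{Cor: GrpRLinRed}) gives an equivariant splitting of the exact sequence $0 \to T(\Def^{\text{l.t.}}_I) \to T(\Def_I) \to \prod_e T(\Def_{I_0}) \to 0$ (and similarly in the $(X,I)$ case), which, combined with the equivariant identification $T(B_j) \cong \prod_e T(\Def_{I_0})$ coming from miniversality of $\pi_{B_j}$, produces the level-one map compatibly with $\iota_j^{\ast}$; at the inductive step, the extensions of the already-constructed map that lie over the fixed $\iota_j^{\ast}$ form a non-empty torsor — non-emptiness using formal smoothness of $R$ over $B_j$ — under an $\Aut(I)$-representation, and linear reductivity produces an $\Aut(I)$-invariant extension. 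Passing to the limit gives the desired $\Aut(I)$-equivariant versal family with $\iota_j^{\ast}$ equivariant and lifting the restriction transformation.

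For $j = 1$ this is already the conclusion, since the group in question is $\Aut(I)$ itself and, by the uniqueness clause of Fact~\ref{Fact: Rim}(ii), our construction realizes a representative of the miniversal datum of Fact~\ref{Fact: Rim}. For $j = 2$: Fact~\ref{Fact: Rim}(iii) gives an $\Aut(X,I)$-equivariant versal family on $R_2$; its restriction to $\Aut(I) \subset \Aut(X,I)$ is an $\Aut(I)$-equivariant versal family on $R_2$, hence isomorphic to the one constructed above by the uniqueness in Rim's theorem (Fact~\ref{Fact: Rim}), and conjugating the full $\Aut(X,I)$-equivariant datum by such an isomorphism yields a representative of the Fact~\ref{Fact: Rim}(iii) datum in which $\Aut(I)$ acts so that $\iota_2$ is equivariant and $\iota_2^{\ast}$ lifts the restriction transformation. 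Finally, since the actions of Fact~\ref{Fact: Rim} are determined only up to a non-unique isomorphism, transporting the maps $\iota_j^{\ast}$ along such isomorphisms shows the statement holds for any choice of these actions. I expect the main obstacle to be the relative inductive construction of the second paragraph: formulating precisely the deformation--obstruction theory governing $\Aut(I)$-equivariant extensions of a morphism of versal families that are constrained to lie over a fixed morphism to $\prod_{e}\h[S_j]$, and verifying that its obstruction groups are the same linearly reductive cohomology groups that already vanish in the proof of Fact~\ref{Fact: Rim}.
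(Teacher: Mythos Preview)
Your approach is correct in outline and reaches the same conclusion, but the paper's proof takes a shorter and less technical route. Rather than rebuilding the equivariant versal family by a relative, order-by-order Rim construction (which, as you note, requires carefully setting up an obstruction theory for equivariant extensions lying over a fixed map to $\prod_e \h[S_j]$), the paper avoids this entirely by a ``two actions, one functor'' trick. Concretely: one first writes down the obvious $\Aut(I)$-action on $R_1 = (\widehat{\otimes}_e S_1)[[W_1,\dots,W_{g_\Sigma}]]$ (factorwise on the $S_1$'s via $\Aut(I) \to \prod_e \Aut(I_0)$, trivially on the $W_i$), which makes $\h[R_1] \to \prod_e \h[S_1]$ equivariant by construction. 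One then observes that both this action and the Fact~\ref{Fact: Rim} action make the composite $\h[R_1] \to \prod_e \Def_{I_0}$ equivariant and induce the same action on the tangent space $T_1(\Def_I)$. The ring $R_1$ is only \emph{versal} (not miniversal) for $\prod_e \Def_{I_0}$, but an inspection of Rim's proof shows that uniqueness still holds for versal rings once the tangent action is specified. Hence there is an automorphism of $R_1$ intertwining the two actions, and transporting the structure along it yields the equivariant lift.

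The payoff of the paper's approach is that it bootstraps directly from Rim's uniqueness statement (slightly strengthened to versal rings with prescribed tangent action) rather than reproducing the inductive cohomological machinery. Your approach would also work, and has the virtue of being more self-contained, but the ``main obstacle'' you flag at the end is genuine: formulating and checking the relative deformation-obstruction theory is real work, and it is exactly this work that the paper's shortcut sidesteps. For $j=2$, your reduction to the $\Aut(I)$-case via conjugation is fine (and is essentially implicit in the paper, which only writes out $j=1$).
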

\begin{proof}
	The only condition that is not immediate is that the natural transformations can be chosen to be equivariant.  We give the proof for $\h[R_1]$ and leave the task of extending the
argument to $\h[R_2]$ to the interested reader.

As $\h[S_1] \to \Def_{I_0}$ is formally smooth, there exists a lift $\h[R_1] \to \prod \h[S_1]$
of the forgetful transformation $\Def_{I} \to \prod \Def_{I_0}$, and such a lift is automatically
formally smooth.  Writing $R_1$ as a power series ring over $\widehat{\otimes} S_1$,
it is easy to see that there exists an action of $\Aut(I)$ on $\h[R_1]$ that makes $\h[R_1] \to \prod \h[S_1]$ equivariant and has the property that the induced action on the tangent
space $T(\h[R_1])$ coincides
with the natural action on $T(\Def_{I})$.
To complete the proof, we must show that this action makes $\h[R_1] \to \Def_{I}$
equivariant, and hence satisfies the conditions of Fact~\ref{Fact: Rim}.

Consider the composition $\h[R_1] \to \prod \Spf(S_1) \to \prod \Def_{I_0}$.  This transformation is formally smooth and hence realizes
$R_1$ as a (non-minimal) versal deformation ring for $\Def_{I_0}$.  Furthermore, the constructed action of
$\Aut(I)$ on $R_1$ makes $\h[R_1] \to \prod \Def_{I_0}$ equivariant and
induces the standard action on $\operatorname{T}(R_1) = \operatorname{T}(\Def_{I})$.    A
second action on $R_1$ with this property is the unique action that makes $\h[R_1] \to \Def_{I}$
equivariant.  An inspection of Rim's \emph{proof} shows that the uniqueness statement in Fact~\ref{Fact: Rim}
still holds if the miniversality hypothesis is weakened to versality, provided the action on the tangent space
is specified.  In particular, there is an automorphism of $R_1$ transforming the first action into the
second.  We can conclude that the map in \eqref{Eqn: FunActLocGlo} and the action in Fact~\ref{Fact: Rim}
can be chosen so that $\h[S_1] \to \Def_{I}$ is equivariant.  This completes the proof.
\end{proof}

We now compute the actions described by Fact~\ref{Fact: Rim}. Let us start with the action of  $\Aut(I_0)$ on $S_1$.

\begin{lemma}	
In terms of the presentation from Definitions~\ref{Def: WhatIsS1}, \ref{Def: WhatIsS2}, define an action of $\Aut(I_0)$ on $S_1$ and $S_2$ by
	making $\tau = a \frac{x}{x+y} + b \frac{y}{x+y} + g \in \Aut(I_0)$ act as
	\begin{equation*}
	u \mapsto a b^{-1} \cdot u, \hspace{1cm} v \mapsto a^{-1} b \cdot v, \hspace{1cm} t \mapsto t.
	\end{equation*}
	Here $a, b \in k^{\ast}$ and $g \in (x,y) \tilde{\node}$.  Then this action is the unique action described by Fact~\ref{Fact: Rim} (i).
\end{lemma}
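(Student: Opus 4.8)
The plan is to invoke the uniqueness clause of Fact~\ref{Fact: Rim}~(i): once we know that the prescribed formula defines an action of $\Aut(I_0)$ on $S_1$ (resp. $S_2$), that this action makes $1+(x,y)\tilde{\node}\subset\tilde{\node^{\ast}}=\Aut(I_0)$ act trivially, and that it makes the structure transformations $\h[S_1]\to\Def_{I_0}$ and $\h[S_2]\to\Def_{(\node,I_0)}$ equivariant, the stated action must coincide with the one produced by Rim's theorem. The first two points are immediate. Writing $\tau=a\frac{x}{x+y}+b\frac{y}{x+y}+g$ with $a,b\in k^{\ast}$ and $g\in(x,y)\tilde{\node}$, the assignment $\tau\mapsto(a,b)$ is a group homomorphism to $(k^{\ast})^{2}$, since under the identification $\tilde{\node}=k[[x]]\oplus k[[y]]$ the leading coefficient of a product is the product of leading coefficients in each branch; the prescribed automorphism of $S_{i}$ depends on $\tau$ only through $ab^{-1}$ (and keeps $t$ fixed in the $S_2$ case), so it is a well-defined action, and it is the identity exactly when $a=b=1$, i.e. exactly on $1+(x,y)\tilde{\node}$.

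The substance is the equivariance. Fix $\tau$ as above and let $\Phi_\tau$ denote the automorphism of $S_1$ (resp. $S_2$) in the statement. I claim the rule $\tilde e\mapsto a\tilde e$, $\tilde f\mapsto b\tilde f$ defines an isomorphism of $\calO_{S_1}$-modules (resp. $\calO_{S_2}$-modules) $\psi_\tau\colon\Phi_\tau^{\ast}I_{S_1}\isoto I_{S_1}$, and likewise over $S_2$. This is a direct check with the presentations of Definitions~\ref{Def: WhatIsS1} and~\ref{Def: WhatIsS2}: the relation $y\tilde e=-u\tilde f$ of $I_{S_1}$ transforms into $y(a\tilde e)=a(y\tilde e)=a(-u\tilde f)=-(ab^{-1}u)(b\tilde f)$, which is precisely the first defining relation of $\Phi_\tau^{\ast}I_{S_1}$ (where $u$ has been replaced by $ab^{-1}u$), and symmetrically $x\tilde f=-v\tilde e$ transforms into the relation with $v$ replaced by $a^{-1}bv$; in the $S_2$ case one checks in addition that $uv=t$ transforms into $(ab^{-1}u)(a^{-1}bv)=uv=t$, matching $t\mapsto t$, and that the deformation of the node $xy=t$ is untouched since $x,y$ and $t$ are fixed. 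Since $a,b\in k^{\ast}$, $\psi_\tau$ is invertible. Reducing $\psi_\tau$ modulo $\mathfrak m$ and using the trivializations $j$ yields the automorphism $e\mapsto ae$, $f\mapsto bf$ of $I_0$, which by the explicit description of the action of $\tilde{\node^{\ast}}$ on $I_0$ recorded just after Lemma~\ref{Lemma: GlShAut} differs from the action of $\tau$ itself only by an element of $1+(x,y)\tilde{\node}$; by Lemma~\ref{lem61} that subgroup acts trivially on $\Def_{I_0}$, and the analogous statement holds for $\Def_{(\node,I_0)}$. Hence $\psi_\tau$ identifies the deformation obtained by pulling back $(I_{S_1},j)$ (resp. $(\calO_{S_2},i,I_{S_2},j)$) along $\Phi_\tau$ with the one obtained by letting $\tau$ act on the trivialization $j$ in the sense of Definition~\ref{Def: DeformAction} — which is exactly equivariance of the structure transformation. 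With the three points established, the prescribed action agrees with Rim's by uniqueness, proving the lemma.

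I expect the main obstacle to be bookkeeping rather than mathematics: pinning down the variance conventions so that the exponent of $\lambda$ comes out as $ab^{-1}$ on $u$ and $a^{-1}bv$ on $v$, i.e. matching the convention under which $\Aut(I_0)$ acts on $\h[S_i](A)=\Hom_{\mathrm{loc}}(S_i,A)$ (through $\Phi_\tau$ versus $\Phi_\tau^{-1}$) against the convention for the natural action on the deformation functor from Definition~\ref{Def: DeformAction}, and verifying carefully that the reduction of $\psi_\tau$ modulo $\mathfrak m$ really is the automorphism of $I_0$ attached to $\tau$ up to the trivially acting factor. One should also keep in mind that $S_1$ and $S_2$ are only miniversal, not pro-representing, so $\psi_\tau$ is not unique and no compatibility among the $\psi_\tau$ is asserted; Rim's theorem only demands equivariance of the structure map, which is what the computation above supplies.
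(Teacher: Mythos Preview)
Your argument is correct and follows essentially the same route as the paper's proof: verify that the formula defines a genuine action factoring through the torus quotient, then exhibit an explicit $\calO_{S_i}$-linear isomorphism between the pullback of the miniversal family along $\Phi_\tau$ and the family with trivialization twisted by $\tau$, and invoke the uniqueness clause of Fact~\ref{Fact: Rim}(i). The paper organizes the computation slightly differently---it first restricts to the subgroup $\{a\frac{x}{x+y}+b\frac{y}{x+y}\}\subset\Aut(I_0)$ mapping isomorphically onto the linearly reductive quotient, so that no appeal to Lemma~\ref{lem61} is needed at the end, and its explicit isomorphism is $\tilde e'\mapsto b^{-1}\tilde e$, $\tilde f'\mapsto a^{-1}\tilde f$ into $(I_{S_1},\tau^{-1}\circ j)$ rather than your $\tilde e\mapsto a\tilde e$, $\tilde f\mapsto b\tilde f$---but this discrepancy is exactly the variance bookkeeping you flag in your last paragraph (whether one acts on $\h[S_1]$ through $\Phi_\tau$ or $\Phi_{\tau}^{-1}$, and correspondingly whether the pulled-back relations carry $ab^{-1}u$ or $a^{-1}bu$), not a mathematical difference.
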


\begin{proof}  We give a proof for the case of $S_1$; the case of $S_2$ is similar, and left to the reader.
	The rule above is easily seen to define an action of $\Aut(I_0)$ on $S_1$ with the property that $1 + (x,u) \node$ acts
trivially, so we need only show that this action makes $\h[S_1] \to \Def_{I_0}$ into an equivariant map.  In fact, it is enough
to verify this for the subgroup of $\Aut(I_0)$ that consists of elements of the form $\tau := a \frac{x}{x+y} + b \frac{y}{x+y}$
because this subgroup maps isomorphically onto $\Aut(I_0)/( 1+(x,y)\node )$.

Given such a $\tau$, what is the pullback of the miniversal deformation $(I_{S_1}, i)$ under $\tau$?  It is the module with presentation
	\begin{equation} \label{Eqn: TwistedPres}
	 	 \langle \tilde{e}', \tilde{f}' \colon y \cdot \tilde{e}' = - a^{-1} b u \cdot \tilde{f}', x \cdot \tilde{f}' = -a b^{-1} v \cdot \tilde{e}' \rangle,
	\end{equation}
together with the identification $j$ sending $\tilde{e}' \mapsto e$, $\tilde{f}' \mapsto f$.  One isomorphism between this deformation and the deformation $(I_{S_1}, \tau^{-1} \circ j)$ is
\begin{equation*}
	\tilde{e}' \mapsto b^{-1} \tilde{e}, \hspace{1cm} \tilde{f}' \mapsto a^{-1} \tilde{f}.
\end{equation*}
This completes the proof.
\end{proof}

We now turn our attention to the action of $\Aut(I)$ on $R_1$.  It is convenient to introduce some combinatorial language.
\begin{defi}
	Let $e \in \Sigma$ be a node that lies on the intersection of the irreducible components $v$ and $w$.
	Write $\el$ for the pair $(v,w)$ and $\er$ for the pair $(w,v)$.  Define $s, t \colon \{ \er, \el \} \to
	\{ v, w \}$ to be projection onto the first component and onto the second component respectively.
\end{defi}
This notation is intended to be suggestive of graph theory.  We may consider $v$ and $w$ as being
vertices of the dual graph $\Gamma_{X}$ that are connected by an edge corresponding to $e$.
The pairs $\el$ and $\er$ should be thought of as orientations of this edge, and the
maps $s$ and $t$ are the ``source" and ``target" maps sending an oriented
edge to its source vertex and its target vertex respectively.  The relation with graph theory
is developed more systematically by the authors in  \cite{local2}.

The group $\Aut(I)$ can also be described using similar notation.
\begin{defi} \label{Def: TTorus}
	Let $X$ be a nodal curve, $I$ a rank $1$, torsion-free sheaf, $\Sigma$ the set of nodes where $I$ fails to be
locally free, and $V$ the set of irreducible components of $X$.  Define $T_{\Sigma}$ to be the subgroup
\begin{displaymath}
	T_{\Sigma} \subset \prod_{v \in V} \bbG_m
\end{displaymath}
that consists of sequences $(\lambda_{v})$ with the property that $\lambda_{v_1} = \lambda_{v_2}$ for every two components $v_1$ and $v_2$ whose intersection contains some
node \emph{not} in $\Sigma$.
\end{defi}

\begin{remark}\label{Rem: TTorus}
The torus $T_\Sigma$ is isomorphic to $\Aut(I) = H^{0}(X_{\Sigma}, \calO_{X_{\Sigma}}^{\ast})$ (Lemma~\ref{Lemma: aut-I}). Indeed, the element
$\lambda = ( \lambda_v ) \in T_{\Sigma}$ corresponds to the regular function  $f \in H^{0}(X_{\Sigma}, \calO_{X_{\Sigma}}^{\ast})$ that is equal to the constant $\lambda_v$ on the
component $v$.
It is convenient to have the following explicit isomorphism of $\Aut(I)$ with a split torus.      Let $\Gamma_X$ be the dual graph of $X$ and let  $\Gamma=\Gamma_X(\Sigma)$ be the
dual graph of a curve obtained from $X$ by smoothing the nodes not in $\Sigma$.   There is a map  of vertices $c:V(\Gamma_X)\to V(\Gamma)$ (\cite[\S 2.1]{local2}) and it is easy to
check there is an  isomorphism
$$
\phi:T_\Gamma :=\prod_{v\in V(\Gamma)}\mathbb G_m\isoto T_\Sigma =\Aut(I)\subseteq \prod_{w\in V(\Gamma_X)}\mathbb G_m
$$
defined as follows.  Given $(g_v)\in \prod_{v\in V(\Gamma)}\mathbb G_m$, set $\phi((g_v))_w=g_{c(w)}$ for each $w\in V(\Gamma_X)$.
\end{remark}

We use the description of $\Aut(I)$ as $T_{\Sigma}$ to describe the action of $\Aut(I)$ on $R_1$ and on $R_2$.

\begin{thm} \label{Thm: MainActionThm}
	Let $X$ be a nodal curve; $I$ a rank $1$, torsion-free sheaf;
	$\Sigma$  the set of nodes where $I$ fails to be locally free; and $g_\Sigma :=  h^{1}(X_{\Sigma}, \calO_{X_{\Sigma}})$
	the arithmetic genus of $X_{\Sigma}$.  Then:
	\begin{enumerate}[(i)]	
		\item Define an action of $	T_{\Sigma} = \Aut(I)$  on
			\begin{displaymath}
				R_1(\Sigma) := k[[ \{ U_{\el}, U_{\er} \colon e \in \Sigma \}; W_1, \dots, W_{g_\Sigma}]]/(U_{\el} U_{\er}
					\colon e \in \Sigma)
			\end{displaymath}
			 by making $\lambda \in T_{\Sigma}$ act as
			 \begin{equation}\label{eqnaction}
			U_{\er} 	\longmapsto  	\lambda_{s(\er)}  \cdot U_{\er} \cdot \lambda_{t(\er)}^{-1}, \hspace{0.8cm}
			U_{\el} 	\longmapsto 		\lambda_{s(\el)}  \cdot U_{\el} \cdot \lambda^{-1}_{t(\el)}, \hspace{0.8cm}
			W_i 		\longmapsto 		 W_i.		
			\end{equation}	 
			Then there exists an isomorphism $R_1 \cong R_1(\Sigma)$ that identifies the above action of $T_{\Sigma}$ on
			$R_1(\Sigma)$ with the action of $\Aut(I)$ on $R_1$ from Fact~\ref{Fact: Rim}.

\item Suppose $\operatorname{Aut}(X)$ is trivial, and define an action of
$T_{\Sigma}=\operatorname{Aut}(X,I)$
			on

\begin{displaymath}
R_2(\Sigma) := k[[ \{ U_{\el}, U_{\er},T_e \colon e \in \Sigma \}; W_1, \dots, W_{m}]]/(U_{\el} U_{\er}-T_e \colon e \in \Sigma)
\end{displaymath}
for some $m\in \mathbb Z_{\ge 0}$ by making $\lambda \in T_{\Sigma}$ act as in \eqref{eqnaction}  and as $T_e\longmapsto T_e$.
			Then there exists an isomorphism $R_2 \cong R_2(\Sigma)$ that identifies the above action of
			$T(\Sigma)$ on $R_2(\Sigma)$ with the action of $\Aut(X,I)$ on $R_2$ from Fact~\ref{Fact: Rim}.
		\end{enumerate}
\end{thm}

\begin{remark}\label{remintro}
Let $\Gamma=\Gamma_X(\Sigma)$ be the dual graph of any curve obtained from $X$ by smoothing the nodes not in $\Sigma$.
Then one can check that in the notation of the theorem above, $g_\Sigma=g(X)-b_1(\Gamma)$.  It is also easy to see that the action of $T_\Gamma$ on $R_I$ and $R_{(X,I)}$
defined in Theorem~\ref{Thm: MainThmA} agrees with the action of $T_\Sigma$ defined above.
\end{remark}

\begin{proof}
	This is a consequence of results already proven in his section.  We only prove the statement
	about $R_1$ and leave the task of extending the proof to $R_2$ to the interested reader.
	
	Suggestively set
\begin{equation}
	S(\Sigma) := k[[U_{\el}, U_{\er} \colon e \in \Sigma]]/(U_{\el} U_{\er} \colon e \in \Sigma).
\end{equation}
This is a miniversal deformation ring for $\prod \Def_{I_0}$, where the product runs over
the elements of $\Sigma$. If we fix an isomorphism between
$(\widehat{\calO}_{X,e}, I \otimes \widehat{\calO}_{X,e})$ and $(\node, I_0)$ for every node $e \in \Sigma$,
then by Corollary~\ref{Cor: RingDesc} and Lemma~\ref{Lemma: FunctorialAction}, there exists
an equivariant map $S(\Sigma) \hookrightarrow R_1$ realizing
$R_1$ as a power series ring over $S(\Sigma)$.  To complete the proof,
we need to show that there exists an expression of $R_1$ as a power series
ring generated by variables invariant under the group action.

Thus, consider the map from the cotangent space of
$\h[R_1]$ to the cotangent space of $\Def^{{\rm l.t.}}_{I}$.
This is an equivariant map, and the action of $\Aut(I)$ on the
target space is trivial (Lemma~\ref{Lemma: LocTrivRTriv}).
Because $\Aut(I)$ is linearly reductive, we can find invariant
elements $\bar{W}_1, \dots, \bar{W}_{g_\Sigma} \in R_1$ whose images in the cotangent space $\mathfrak{m}/\mathfrak{m}^2$
map isomorphically onto the cotangent space of $\Def^{{\rm l.t.}}_{I}$.

Letting $W_1, \dots, W_{g_\Sigma}$ denote indeterminates, define
a map
\begin{displaymath}
 	\phi \colon S(\Sigma)[[W_1, \dots, W_{g_\Sigma}]] \to R_1
\end{displaymath}
by sending $W_i$ to $\bar{W}_i$.  The target and source
of $\phi$ are isomorphic, and the induced map on tangent spaces is an isomorphism,
hence $\phi$ itself must be an isomorphism.

Furthermore, if we make $T_{\Sigma}$ act on $S(\Sigma)[[W_1, \dots, W_{g_\Sigma}]]$
by making the group act trivially on the indeterminates, then $\phi$ is
equivariant.   The ring $S(\Sigma)[[W_1, \dots, W_{g_\Sigma}]]$, together with this
group action, is nothing other than $R_1(\Sigma)$, so the proof is complete.
\end{proof}

Observe that the theorem computes the action of $\Aut(X,I)$ on $R_2$ when $X$ is automorphism-free.
It would interesting to compute the action when $X$ is stable, but possibly admits non-trivial
automorphisms.  Indeed, such a result (combined with a suitable extension of
Theorem~\ref{Thm: MainLocStr}) would allow us to remove the hypothesis that $X$
does not have an automorphism from Theorem~\ref{Thm: MainThmA}.   When
$X$ does not admit an automorphism of order $p={\rm char}(k)$, Fact~\ref{Fact: Rim} states that there
is a unique action of $\Aut(X,I)$, so the problem is to modify the action described in Theorem~\ref{Thm: MainActionThm}
to incorporate $\Aut(X)$.
The case where $X$ admits an order $p={\rm char}(k)$ automorphism is more challenging for then we can no longer
cite Rim's work to assert that $\Aut(X,I)$ acts on $R_2$ or to assert that such an action, if it exists,
is unique.  Simply knowing if $R_2$ still admits an unique action of $\Aut(X,I)$ would be interesting.  More generally, it would be interesting to know if Rim's Theorem remains true if
the assumption that the group $G$ acting is linearly reductive is weakened.

\section{Luna Slice Argument}\label{Sec: Luna}
We now prove that the invariant subrings in Theorem~\ref{Thm: MainActionThm} are isomorphic to the
completed local rings of the compactified Jacobians.  The main result is the following.

\begin{thm} \label{Thm: MainLocStr}
	Let $X$ be a nodal curve and  $I$ a rank $1$, torsion-free sheaf. 
	\begin{enumerate}[(i)]	
		\item   \label{Thm: MainLocStr1} Let $\bar{J}(X)$ be a compactified Jacobian of $X$ and assume that $I$ is polystable with respect to the associated stability condition.
		Then the action from Fact~\ref{Fact: Rim} of $\Aut(I)$ on the deformation ring $R_1$ parameterizing deformations of
		$I$ satisfies 	
		\begin{displaymath}
			\widehat{\calO}_{\bar{J}(X), [I]} \cong R_1^{\Aut(I)}.
		\end{displaymath}

		\item  \label{Thm: MainLocStr2} Assume  $X$ is stable and does not admit an order $p$ automorphism, and $I$ is slope polystable
		with respect to the dualizing sheaf $\omega_X$.
		Then the action of $\Aut(X,I)$ on the deformation ring $R_2$ satisfies
	\begin{displaymath}	
					\widehat{\calO}_{\pdbst,[(X,I)]}  \cong  R_2^{\Aut(X,I)}.
	\end{displaymath}
	\end{enumerate}
\end{thm}

	In the theorem, the isomorphisms between the complete local rings are non-canonical, but this is necessarily so as the rings $R_1$ and $R_2$ are themselves only defined up
to non-canonical isomorphism.
	
\begin{remark}\label{remmtpf}
Observe that Theorem~\ref{Thm: MainLocStr}, together with Theorem~\ref{Thm: MainActionThm}, establishes Theorem~\ref{Thm: MainThmA} (see also Remarks~\ref{Rem:
TTorus}, \ref{remintro}).    An elementary argument in GIT shows that the ring $\widehat {B(\Gamma)}^{T_\Gamma}$ defined in Theorem~\ref{Thm: MainThmA} has dimension
$b_1(\Gamma)+\#E(\Gamma)$.  Since $\bar J_{d,g}$ has dimension $4g-3$, it follows that $m=4g-3-b_1(\Gamma)-\# E(\Gamma)$ in Theorem~\ref{Thm: MainActionThm}.
\end{remark}

 The proof of Theorem~\ref{Thm: MainLocStr} is given at the end of the section, where it is deduced from the following sequence of lemmas.

\begin{lemma} \label{Lemma: QuotToSh}
	Let $X$ be a nodal curve; $I$ a rank $1$, torsion-free sheaf; and
	$q \colon \calO_{X}^{\oplus r} \twoheadrightarrow I$ a surjection.
	If $H^{1}(X, I)=0$, then the forgetful morphism $\Def_{q} \to \Def_{I}$
	is formally smooth.
	Assume further that $X$ is stable and $p \colon X \hookrightarrow \bbP^N$ is a
	$10$-canonical embedding.  Then $\Def_{(p,q)} \to \Def_{(X,I)}$ is formally smooth.
\end{lemma}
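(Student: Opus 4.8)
The plan is to verify the infinitesimal lifting criterion for formal smoothness directly. Recall that a natural transformation $\Phi\colon\mathcal F\to\mathcal G$ of functors on $\art$ is formally smooth exactly when, for every small extension $A'\twoheadrightarrow A$ with kernel $J$ (so that $\mathfrak m_{A'}\cdot J=0$), every object of $\mathcal F(A)$ together with a compatible object of $\mathcal G(A')$ lifts to an object of $\mathcal F(A')$. So for the first assertion I would fix such a small extension, a deformation $\bigl(\calO_{X_A}^{\oplus r}\xrightarrow{q_A}F_A,i_{\mathrm{can}},j\bigr)$ of $q$ over $A$ (where $X_A=X\otimes_kA$), and a deformation $(I_{A'},j')$ of $I$ over $A'$ restricting over $A$ to $(F_A,j)$, and then produce a surjection $q_{A'}\colon\calO_{X_{A'}}^{\oplus r}\twoheadrightarrow I_{A'}$ lifting $q_A$ and compatible with the remaining data.

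The key point is that the curve is rigid in the first assertion and $\calO_{X_{A'}}^{\oplus r}$ is free. Tensoring $0\to J\to A'\to A\to 0$ with the $A'$-flat module $I_{A'}$ and using $j'$ gives a short exact sequence of $\calO_{X_{A'}}$-modules $0\to I\otimes_kJ\to I_{A'}\to F_A\to 0$, and applying $\Hom_{\calO_{X_{A'}}}(\calO_{X_{A'}}^{\oplus r},-)$ produces a long exact sequence in which the obstruction to lifting $q_A$ (viewed in $\Hom_{\calO_{X_A}}(\calO_{X_A}^{\oplus r},F_A)$) to a homomorphism $\calO_{X_{A'}}^{\oplus r}\to I_{A'}$ lies in $\operatorname{Ext}^1_{X_{A'}}(\calO_{X_{A'}}^{\oplus r},I\otimes_kJ)\cong H^1(X,I)^{\oplus r}\otimes_kJ$, which is zero by hypothesis. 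Any such lift $q_{A'}$ is then automatically surjective, since its reduction $q_A$ is surjective and $J$ is nilpotent (apply Nakayama to the cokernel), and one checks from the definitions that $\bigl(\calO_{X_{A'}}^{\oplus r}\xrightarrow{q_{A'}}I_{A'},i_{\mathrm{can}},j'\bigr)$ is a deformation of $q$ over $A'$ restricting to the prescribed data. This shows $\Def_q\to\Def_I$ is formally smooth.

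For the second assertion I would factor the transformation as $\Def_{(p,q)}\to\Def_{(X,q)}\to\Def_{(X,I)}$ and show each factor is formally smooth. The map $\Def_{(X,q)}\to\Def_{(X,I)}$ is handled by the same argument: given a deformation $(X_{A'},i',F_{A'},j')$ of $(X,I)$ over $A'$ and an agreeing deformation $(X_A,i,q_A,j)$ of $(X,q)$ over $A$, the analogous flatness sequence on $X_{A'}$ lets one lift $q_A$ to a surjection $\calO_{X_{A'}}^{\oplus r}\twoheadrightarrow F_{A'}$, the obstruction again lying in $H^1(X,I)^{\oplus r}\otimes_kJ=0$; here the curve $X$ is allowed to deform, but $I\otimes_kJ$ is supported on the closed fibre, so the cohomological input is unchanged. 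For $\Def_{(p,q)}\to\Def_{(X,q)}$, given a deformation $X_{A'}$ of $X$ lifting the data over $A$, the sheaf $\omega_{X_{A'}/A'}^{\otimes10}$ is $A'$-relatively very ample, and since $H^1(X,\omega_X^{\otimes10})=0$ by Serre duality and stability of $X$, cohomology and base change make $H^0(X_{A'},\omega_{X_{A'}/A'}^{\otimes10})$ a free $A'$-module of rank $N+1$ whose formation commutes with restriction to $A$; lifting the basis defining $p_A$ to such a basis produces a closed embedding $p_{A'}\colon X_{A'}\hookrightarrow\bbP^N_{A'}$ with $\calO_{X_{A'}}(1)\cong\omega_{X_{A'}/A'}^{\otimes10}$ and $p_{A'}\otimes1=p\circ i'$. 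Composing the two gives formal smoothness of $\Def_{(p,q)}\to\Def_{(X,I)}$.

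I do not expect a genuine obstacle here: every lifting problem is governed by an $H^1$ that vanishes for the stated reason. The only delicate points are bookkeeping — matching the identification data $i$, $j$ (and, in the last step, $p_A$) so that the lifts genuinely define objects of the functors in the precise sense of the definitions — and tracking where the hypotheses are used: $H^1(X,I)=0$ is what makes the quotient map deform, while stability of $X$ is what makes $\omega_X^{\otimes10}$ very ample with vanishing first cohomology, which is what lets the $10$-canonical embedding deform.
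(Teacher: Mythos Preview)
Your proof is correct and follows essentially the same approach as the paper: the paper lifts the images $s_1,\dots,s_r\in H^0(X_A,I_A)$ of the standard basis vectors to $H^0(X_B,I_B)$ using that $H^1(X,I)=0$ forces surjectivity of $H^0(X_B,I_B)\to H^0(X_A,I_A)$ (via a filtration argument), which is exactly your Ext computation rephrased, since $\Hom(\calO^{\oplus r},I_{A'})=H^0(I_{A'})^{\oplus r}$. The paper leaves the second assertion to the reader, so your explicit factorization through $\Def_{(X,q)}$ and the argument for $\Def_{(p,q)}\to\Def_{(X,q)}$ via $H^1(X,\omega_X^{\otimes 10})=0$ simply fills in details the paper omits.
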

\begin{proof}
	We prove the statement about $\Def_{q} \to \Def_{I}$ and leave the proof for $\Def_{(p,q)} \to \Def_{(X, I)}$ to the interested
	reader.  Given a surjection $B \twoheadrightarrow A$ of artin local $k$-algebras,
	a deformation $(I_B, j)$ of $I$ over $B$, and a deformation $(q_{A}, j)$ of $q$
	such that the associated deformation of $I$ is isomorphic to $(I_{B} \otimes_{B} A, j \otimes 1)$,
	we must show that there exists a deformation $(q_{B}, j)$ extending $(q_A, j)$ and inducing $(I_B, j)$.
	A filtering argument shows that the vanishing $H^{1}(X,I)=0$ implies that $H^{0}(X_{B}, I_{B}) \to H^{0}(X_{A}, I_{A})$
	is surjective.  Now suppose $s_1, \dots, s_r \in H^{0}(I_{A})$ is the image of the standard basis for
	$H^{0}(X_{A}, \calO^{\oplus r}_{X_A})$.  If we lift these elements to $\tilde{s}_1, \dots, \tilde{s}_r \in H^{0}(X_{B}, I_{B})$
	and define $q_{B} \colon \calO^{\oplus r}_{X_B} \twoheadrightarrow I_{B}$ to be the map the sends the $i$-th standard basis element
	to $\tilde{s}_i$, then $(q_B, j)$ has the desired properties.
\end{proof}

We now relate $R_1$ and $R_2$ to the appropriate Quot schemes.

\begin{lemma} \label{Lemma: SliceToRing}
	Let $X$ be a nodal curve $X$; $I$ a rank $1$, torsion-free sheaf $I$; $q \colon \calO^{\oplus r}_{X} \twoheadrightarrow I$ a quotient map corresponding to a point $\tilde{x}\in \operatorname{Quot}(\calO_{X}^{\oplus r})$.
	 Assume:
	\begin{itemize}
		\item $H^{1}(X, I)=0$;
		\item $q \colon H^{0}(X, \calO_{X}^{\oplus r}) \to H^{0}(X, I)$ is an isomorphism.
	\end{itemize}
		\begin{enumerate}[(i)]	
		\item \label{Lemma: SliceToRing1} If $Z$ is a slice through $\tilde{x}$  in some invariant affine open neighborhood  $\tilde{x}\in U\subseteq \Quot( \calO_{X}^{\oplus r})$,
		 then the completed local ring $\widehat{\calO}_{Z, \tilde{x}}$ of $Z$ at  $\tilde{x}$
			is a miniversal deformation ring for $\Def_{I}$.

		 \item	\label{Lemma: SliceToRing2}  
		\noindent Assume additionally that $X$ is stable.  Let $p \colon X \hookrightarrow \bbP^N$ be a  $10$-canonical 	embedding with $(p,q)$ corresponding to the point  $\tilde{y}$ of
			the relative Quot scheme $\operatorname{Quot}( \calO_{X_g}^{\oplus r})$ (as in the proof of Fact \ref{F:univJac}). 			
                 If $Z$ is a slice through $\tilde{y}$ in some invariant affine open neighborhood $\tilde{y}\in V \subseteq \Quot(\calO^{\oplus r}_{X_g})$, 
                 then  the completed local ring of $\widehat{\calO}_{Z, \tilde{y}}$ of $Z$ at  $\tilde{y}$ is a miniversal deformation ring  for $\Def_{(X,I)}$.

	         \end{enumerate}
\end{lemma}
\begin{proof}
	We prove the statement relating $\Quot(\calO_{X}^{\oplus r})$ to $\Def_{I}$ and leave the
	task of extending the argument to $\Def_{(X, I)}$
	to the interested reader.  The necessary changes are primarily notational  (e.g.~the
	action of $\operatorname{SL}_{r}$ must be
	replaced with that of $\operatorname{SL}_{r}
	\times \operatorname{SL}_{N+1}$).

	Temporarily set $F$ equal to the functor pro-represented by $\widehat{\calO}_{Z, x}$.
	There is a natural forgetful map $\Def_{q} \to \Def_{I}$, and our goal is to show that the
	restriction of this map to $F$ is formally smooth and an isomorphism on tangent spaces.
	We do this by proving that $F(A) \to \Def_{I}(A)$ is injective for $A = k[\epsilon]$ and has the same image
	as $\Def_{q}(A) \to \Def_{I}(A)$ for all  $A \in \art$.  Because
	$\Def_{q} \to \Def_{I}$ is formally smooth (Lemma~\ref{Lemma: QuotToSh}), the lemma will then follow.
	
	The desired facts are proven by studying the action of the lie algebra of $\operatorname{SL}_{r}$
	on deformations.  Set $\mathfrak{sl}_{r}$ equal to the deformation functor
	pro-represented by the completed local ring of $\operatorname{SL}_{r}$ at the
	identity and $\mathfrak{h}$ equal to the deformation functor
	associated to the stabilizer $H := \operatorname{Stab}(\tilde{x}) \subset \operatorname{SL}_{r}$.
	There is a natural map $\mathfrak{sl}_{r}/\mathfrak{h} \to \Def_{q}$
	given by the derivative of the orbit map.  Concretely, this is defined
	by the rule $g \mapsto g \cdot v_{\text{triv}}$, where $v_{\text{triv}}$
	is the trivial deformation (over an unspecified artin local algebra).  Because
	$U$ admits a slice, there exists a morphism $\Def_{q} \to \mathfrak{sl}_{r}/\mathfrak{h}$
	that is a contraction onto the orbit in the sense that the derivative of the orbit map defines a section.  Furthermore,
	this morphism has the property that the preimage of the trivial element $0 \in \mathfrak{sl}_{r}/\mathfrak{h}(A)$
	is $F(A) \subset \Def_{q}(A)$.  The construction of the morphism is immediate: the scheme $Z \times_{H} \operatorname{SL}_{r}$
	admits a global contraction morphism given by projection onto the
	second factor, and the desired infinitesimal contraction is obtained
	by choosing a local inverse of $Z \times_{H} \operatorname{SL}_{r} \to \Quot(\calO^{\oplus r}_{X})$.

	We can use the contraction morphism to deduce the second claim, that $\Def_{q}(A) \to \Def_{I}(A)$ and $F(A) \to \Def_{I}(A)$
	have the same image.  Indeed, if $v \in \Def_{q}(A)$ maps to an element of
	$\mathfrak{sl}_{r}/\mathfrak{h}(A)$ represented by $g \in \mathfrak{sl}_{r}(A)$,
	then $g^{-1} \cdot v$ lies in $F(A)$.  Because both $v$ and $g^{-1} \cdot v$ map to the same element of $\Def_{I}(A)$,
	we have proven the claim.

	We also need to verify that $F( k[\epsilon] ) \to \Def_{I}(k[\epsilon])$ is injective.  This too can be proven
	using the contraction map, but  we must first relate the kernel of $F( k[\epsilon] ) \to \Def_{I}(k[\epsilon])$
	to the contraction.   Specifically, we claim the kernel equals the image of the orbit map.  It is immediate that
	the image is contained in the kernel, but the reverse inclusion requires
	more justification.  Thus, suppose
	$(q_1 \colon \calO_{X_1}^{\oplus r} \to I_1, j)$  is a 1st order deformation with the property that $(I_1, j)$ is the trivial deformation.
	Because $q$ induces an isomorphism on global sections, we can chose bases and use the identification $j$ to
	represent $q_1 \colon H^{0}(X, \calO_{X_1}^{\oplus r}) \to H^{0}(X, I_1)$ by a matrix $g$ that reduces to the identity modulo
	$\epsilon$.   The matrix $g$ may not lie in $\mathfrak{sl}_{r}(k[\epsilon])$, but if we set
	$\delta := \det(g)$, then the product $\delta \cdot g^{-1}$ does.  One may check that $\delta \cdot g^{-1}$
	maps to the deformation  represented by $(q_1 \colon \calO^{\oplus r}_{X_1} \twoheadrightarrow I_1,j)$, establishing the reverse
	inclusion.

	We now prove injectivity by showing directly that the image of the orbit map has trivial intersection with
	$F( k[\epsilon] )$.  Given $v$ in this intersection, the image in $\mathfrak{sl}_{r}/\mathfrak{h}(k[\epsilon])$ under the contraction
	morphism is zero because $v$ lies in $F(k[\epsilon])$.  But, as $v$ also lies in the image of the orbit map,
	the image under the composition
	$\Def_{q}(k[\epsilon]) \to \mathfrak{sl}_{r}/\mathfrak{h}(k[\epsilon]) \to \Def_{q}(k[\epsilon])$ of the contraction map with the orbit
	map is $v$. Thus, $v=0$, and the proof is complete.
\end{proof}

The following definition and lemma relate the stabilizer of a point of the Quot scheme to an automorphism group.
\begin{defi}\label{D:nathom}
	Let $I$ be a rank $1$, torsion-free sheaf on a nodal curve $X$; $q \colon \calO_{X}^{\oplus r} \twoheadrightarrow I$ a quotient map
	corresponding to a point $\tilde{x}$ belonging to some Quot scheme $\Quot(\calO_{X}^{\oplus r})$.  Assume
	\begin{itemize}
      	\item $I$ is generated by global sections;
	\item $q \colon H^{0}(X, \calO_{X}^{\oplus r}) \to H^{0}(X, I)$ is an isomorphism.
	\end{itemize}	
	\begin{enumerate}[(i)]	
		\item  If $\operatorname{Stab}(\tilde{x}) \subset \operatorname{SL}_{r}$ is the stabilizer under the natural action on $\Quot(\calO_{X}^{\oplus r})$,
			then the \textbf{natural homomorphism} 
			$$\operatorname{Stab}(\tilde{x}) \to \Aut(I)$$
			 is defined by sending $g  \in \operatorname{Stab}(\tilde{x})$ into the unique automorphism  $\alpha(g) \colon I \to I$ with the property that
			$\alpha(g) \circ q  = q \circ g^{-1}$ (which exists since $I$ is generated by the image of $H^{0}(X, \calO_{X}^{\oplus r})$).
         \end{enumerate}
         Assume additionally that $X$ is stable.  Let $p \colon X \hookrightarrow \bbP^N$ be a  $10$-canonical
	embedding with $(p,q)$ corresponding to the point  $\tilde{y}$ of the relative Quot scheme $\operatorname{Quot}( \calO_{X_g}^{\oplus r})$ (as in the proof of Fact \ref{F:univJac}).	
         \begin{enumerate}			
		\item[(ii)] If $\operatorname{Stab}(\tilde{y}) \subset \operatorname{SL}_{r} \times \operatorname{SL}_{N+1}$ is the stabilizer under the natural action on $\operatorname{Quot}( \calO_{X_g}^{\oplus r})$, then
			the \textbf{natural homomorphism} 
			$$\operatorname{Stab}(\tilde{y}) \to \Aut(X,I)$$
			 is defined by sending  $g = (g_1, g_2) \in \operatorname{Stab}(\tilde{y})$ into the unique element $\alpha(g) = (\alpha_1(g), \alpha_2(g)) \in \Aut(X,I)$ such that 			
			$p \circ \alpha_1(g) = g_2 \circ p$ and $\alpha_2(g) \circ \alpha_1(g)_{*}(q)  = q \circ g_{1}^{-1}$.
	\end{enumerate}
\end{defi}

\begin{lemma} \label{Lemma: StabToAut}
Same notation as in Definition \ref{D:nathom}. 
	\begin{enumerate}[(i)]	
	 	\item  \label{Lemma: StabToAut1}
The natural homomorphism $\operatorname{Stab}(\tilde{x}) \to \Aut(I)$ is injective with image equal to the subgroup 
			 $\Aut_{1}(I)\subset \operatorname{Aut}(I)$ consisting  of the automorphisms $\tau \in \Aut(I)$ with the property that
	the induced automorphism $H^{0}(X,I) \to H^{0}(X,I)$ has determinant $+1$.
		\item  \label{Lemma: StabToAut2}
The the natural homomorphism $\operatorname{Stab}(\tilde{y}) \to \Aut(X,I)$ is injective with image equal to the subgroup $\operatorname{Aut}_1(X,I) \subset \operatorname{Aut}(X,I)$ 
			consisting  of the automorphisms $(\sigma, \tau) \in \Aut(X,I)$ with the property that the induced automorphism
		         $H^{0}(X, I) \stackrel{{\rm can}}{\longrightarrow} H^{0}(X, \sigma_{*}(I)) \stackrel{\tau}{\longrightarrow} H^{0}(X,I)$ has determinant $+1$.
	\end{enumerate}
\end{lemma}
\begin{proof}
	As in the last proof, we only prove  the statement for $\operatorname{Stab}(\tilde{x})$ and
	leave the case of $\operatorname{Stab}(\tilde{y})$  to the interested reader.  Set $s_1, \dots, s_r \in H^{0}(X, I)$ equal
	to the image of the standard basis
	for $H^{0}(X, \calO_{X}^{\oplus r})$.  We first show injectivity.  Given $g \in \operatorname{Stab}(\tilde{x})$,
	write $(a_{i,j}) :=g^{-1}$.  Then $\alpha(g)$ satisfies
\begin{equation} \label{Eqn: ActionOfAlpha}
	\alpha( g )(s_i) = a_{i,1} s_1 + \dots + a_{i,r} s_r.
\end{equation}
	If $\alpha(g)$ is the identity, then we must have $\alpha(g)(s_i) = s_i$ for all $i$.  But the $s_i$'s form
	a basis, so this is only possible if $g = \operatorname{id}_{r}$, showing injectivity.  Similarly, given an
	$\alpha \in \Aut(I)$ that induces a determinant $+1$ automorphism of $H^{0}(X, I)$, define scalars
	$a_{i,j}$ as in Eqn.~\eqref{Eqn: ActionOfAlpha}.  Then $g := (a_{i,j})^{-1} \in \operatorname{SL}_{r}$
	is an element of $\Aut_1(I)$ with $\alpha(g) = \alpha$.  This completes the proof.
\end{proof}

The last lemma we need asserts that the formation of the relevant group quotients commutes with completion.
\begin{lemma} \label{Lemma: CompleteToInv}
	Let $Z$ be an affine algebraic scheme, $\tilde{x} \in Z$ a point, and $H$ an algebraic group acting on $Z$ that fixes $\tilde{x}$.  Assume $H$ is linearly reductive.
	Then the formation of $H$-invariants commutes with completion, i.e. if we call $x$ the image of $\tilde{x}$ in $Z/H$, then we have
	\begin{displaymath}
		\widehat{\calO}_{Z, \tilde{x}}^{H} \cong \widehat{\calO}_{Z/H, x}.
	\end{displaymath}
\end{lemma}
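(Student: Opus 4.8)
The plan is to translate everything into commutative algebra. Write $Z=\Spec A$ with $A$ a finitely generated $k$-algebra carrying a rational $H$-action, so that $Z/H=\Spec A^H$ and $A^H$ is again a finitely generated $k$-algebra by the finiteness theorem for invariants (Hilbert, Nagata), which uses linear reductivity of $H$. Since $H$ fixes $\tilde x$, its maximal ideal $\mathfrak m=\mathfrak m_{\tilde x}\subset A$ is $H$-stable, and $\mathfrak n:=\mathfrak m\cap A^H$ is the maximal ideal of the image point $x$; in particular $H$ acts compatibly on the tower $\{A/\mathfrak m^j\}_j$, hence on $\widehat{\calO}_{Z,\tilde x}$. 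The statement to prove thus reads $\bigl(\varprojlim_j A/\mathfrak m^j\bigr)^H\cong \varprojlim_j A^H/\mathfrak n^j$.

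First I would dispose of the formal part. For each $j$ the short exact sequence $0\to\mathfrak m^j\to A\to A/\mathfrak m^j\to 0$ is a sequence of rational $H$-modules, and $(-)^H$ is exact on rational $H$-modules by linear reductivity, so $(A/\mathfrak m^j)^H=A^H/(\mathfrak m^j\cap A^H)$. Moreover $(-)^H=\Hom_H(k,-)$ commutes with inverse limits, so
$$
\bigl(\widehat{\calO}_{Z,\tilde x}\bigr)^H=\Bigl(\varprojlim_j A/\mathfrak m^j\Bigr)^H=\varprojlim_j (A/\mathfrak m^j)^H=\varprojlim_j A^H/(\mathfrak m^j\cap A^H).
$$
Everything then reduces to showing that the filtration $\{\mathfrak m^j\cap A^H\}_j$ of $A^H$ is cofinal with the $\mathfrak n$-adic filtration $\{\mathfrak n^j\}_j$; the inclusion $\mathfrak n^j\subseteq\mathfrak m^j\cap A^H$ is immediate, and I expect the reverse cofinality to be the only substantive point.

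For that, the plan is to pass to the Rees algebra $\mathcal R:=\bigoplus_{j\ge 0}\mathfrak m^j\subseteq A[s]$, which is a finitely generated $k$-algebra carrying a rational $H$-action fixing $s$; by exactness of invariants, $\mathcal R^H=\bigoplus_{j\ge 0}(\mathfrak m^j\cap A^H)$ as graded rings. Applying the finiteness theorem again, $\mathcal R^H$ is a finitely generated $\bbZ_{\ge 0}$-graded $k$-algebra, so by the standard Veronese argument there is an integer $d\ge 1$ with the $d$-th Veronese $\bigoplus_j(\mathfrak m^{dj}\cap A^H)$ generated over its degree-zero part $A^H$ by its degree-one part; that is, $\mathfrak m^{dj}\cap A^H=(\mathfrak m^d\cap A^H)^j$ for all $j$. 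Writing $\mathfrak a:=\mathfrak m^d\cap A^H$ we then get $\mathfrak n^{dj}\subseteq\mathfrak a^j=\mathfrak m^{dj}\cap A^H\subseteq\mathfrak n^j$, so the three filtrations $\{\mathfrak m^j\cap A^H\}_j$, $\{\mathfrak a^j\}_j$ and $\{\mathfrak n^j\}_j$ are pairwise cofinal and define the same completion. Substituting back into the displayed identity yields $\bigl(\widehat{\calO}_{Z,\tilde x}\bigr)^H\cong\varprojlim_j A^H/\mathfrak n^j=\widehat{\calO}_{Z/H,x}$.

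The hardest part, then, is the cofinality of $\{\mathfrak m^j\cap A^H\}_j$ with the $\mathfrak n$-adic filtration; once one observes it is controlled by the invariant ring of the Rees algebra, it follows from finiteness of invariants plus the Veronese trick, and no hypothesis beyond $A$ of finite type and $H$ linearly reductive is needed. Linear reductivity enters exactly twice: in the exactness of $(-)^H$ used to identify $(A/\mathfrak m^j)^H$ and $\mathcal R^H$, and in the finiteness of $A^H$ and $\mathcal R^H$; this is consistent with the hypotheses imposed in the statement (and with the fact that in the applications $H$ is a stabilizer of a point with closed orbit, hence reduced and linearly reductive).
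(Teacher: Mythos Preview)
Your proof is correct and takes a genuinely different route from the paper's. The paper argues directly: it builds the map $\widehat{\calO}_{Z/H,x}\to\widehat{\calO}_{Z,\tilde x}^{\,H}$, checks injectivity by a valuation argument, and for surjectivity takes an invariant $r\in\widehat{\calO}_{Z,\tilde x}^{\,H}$, reduces it modulo $\mathfrak m_{\tilde x}^{\,i+1}$, and lifts the resulting invariant back to $\calO_{Z,\tilde x}^{\,H}$ via an $H$-equivariant splitting of $\calO_{Z,\tilde x}\twoheadrightarrow\calO_{Z,\tilde x}/\mathfrak m_{\tilde x}^{\,i+1}$ (such splittings exist by linear reductivity); this produces a sequence in the image of $\widehat{\calO}_{Z/H,x}$ converging to $r$. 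You instead reduce the whole question to a cofinality of filtrations on $A^H$, namely $\{\mathfrak m^j\cap A^H\}_j$ versus $\{\mathfrak n^j\}_j$, and settle that with the Rees algebra of $\mathfrak m$, finite generation of its invariant ring, and the Veronese trick. Your argument is more structural and isolates precisely where linear reductivity is used (exactness of $(-)^H$ and Hilbert--Nagata finiteness, applied once to $A$ and once to the Rees algebra); the paper's is lighter on machinery---no Rees algebras, no Veronese---but leaves it to the reader to see why the limit of the lifted sequence remains in the image of $\widehat{\calO}_{Z/H,x}$, which is ultimately the same cofinality issue you make explicit.
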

\begin{proof}

	This is an exercise in linear reductivity.  The quotient map induces
	a local homomorphism $\widehat{\calO}_{Z/H, x} \to \widehat{\calO}_{Z, \tilde{x}}$.  Because $\tilde{x}$
	is a fixed point, $H$ acts continuously on $\widehat{\calO}_{Z, \tilde{x}}$, and   passing to invariants, we may replace
	the target of this map with $\widehat{\calO}_{Z, \tilde{x}}^{H}$.  Our goal is to show that  the resulting map is an isomorphism.
	
	For injectivity, say $r \in \widehat{\calO}_{Z/H, x}$
	lies in the kernel.  By picking a sequence $\{ r_i \}_{i=1}^{\infty}$ in $\calO_{Z/H, x}$  converging to $r$ and
	studying the valuation of $r_i$, one can show that $r=0$.  Surjectivity requires more work.

	Given $r \in \widehat{\calO}_{Z, \tilde{x}}^{H}$, consider the reduction map $\widehat{\calO}_{Z, \tilde{x}}^{H} \to \widehat{\calO}_{Z, \tilde{x}}/\mathfrak{m}_{\tilde{x}}^{i+1}$.
	The element $r$ maps to an $H$-invariant element $\bar{r}$ in the target, which is canonically isomorphic to $\calO_{Z, \tilde{x}}/\mathfrak{m}_{\tilde{x}}^{i+1}$. Fixing
	an equivariant splitting of $\calO_{Z, \tilde{x}} \to \calO_{Z, \tilde{x}}/\mathfrak{m}_{\tilde{x}}^{i+1}$ (which exists by linear reductivity), we can
	lift $\bar{r}$ to an invariant element $r_i$ of $\calO_{Z, \tilde{x}}$.  The collection of all these elements defines a sequence
	$\{ r_i \}_{i=1}^{\infty}$ whose limit is $r$.  Furthermore, every term in the sequence lies in $\widehat{\calO}_{Z/H, x}$;
	thus the limit must lie in this ring as well.  This completes the proof.
\end{proof}

\begin{proof}[Proof of Theorem~\ref{Thm: MainLocStr}]
	The proof is an application of the Luna Slice Theorem,
	together with the previous lemmas.  As usual, we only give the proof for a 
	compactified Jacobian of a fixed nodal curve and leave the task of extending the argument to the universal
	compactified Jacobian to the interested reader (replacing Lemma \ref{Lemma: SliceToRing}\eqref{Lemma: SliceToRing1} with Lemma \ref{Lemma: SliceToRing}\eqref{Lemma: SliceToRing2}
        and  Lemma \ref{Lemma: StabToAut}\eqref{Lemma: StabToAut1} with Lemma  \ref{Lemma: StabToAut}\eqref{Lemma: StabToAut2} in the argument that follows).
        
        According to Corollary \ref{C:GITpres}, we can assume that $\bar{J}(X)\cong U/\!\!/\operatorname{SL}_r=U^{\rm ss}/\SL_r,$ where $U$ is the open subset of the Quot scheme  
        $\Quot(\calO_{X}^{\oplus r})$ defined in \emph{loc.~cit.}~ 
        
        Take now any  lift of $[I]\in \bar J(X)$  to a point $\tilde{x}\in U\subseteq \operatorname{Quot}(\calO_{X}^{\oplus r})$, corresponding to a quotient map
        $q \colon \calO^{\oplus r}_{X} \twoheadrightarrow I$, and observe that the orbit of $\widetilde{x}$ is closed in the semistable locus $U^{\rm ss}$ since $I$ is polystable.  
        Lemma \ref{Lemma: StabToAut}\eqref{Lemma: StabToAut1} identifies $\Stab(\widetilde{x})$ with the subgroup $\Aut_1(I)\subset \Aut(I)$ which is a (multiplicative) torus 
        (since $\Aut(I)$ is a torus by Lemma \ref{Lemma: aut-I} and any subgroup of a torus is a torus), hence linearly reductive.  
        Therefore, we can apply Luna Slice Theorem (see \S\ref{Subsec: GIT}) in order to get a slice $Z$ of $U$ at $\w{x}$.
         
	 Lemma~\ref{Lemma: SliceToRing}\eqref{Lemma: SliceToRing1}
	identifies  the ring $\widehat{\calO}_{Z,\tilde{x}}$ with the miniversal deformation ring $R_1$ of $\Def_I$.  Moreover, an exercise in unwinding the definitions shows that the 
	natural transformation $\pi:\Spf \widehat{\calO}_{Z,\tilde{x}}\to \Def_I$ is equivariant with respect to the natural homomorphism $\Stab(\tilde{x})\hookrightarrow \Aut(I)$ and the actions of 
	$\Stab(\tilde{x})$ on $\Spf \widehat{\calO}_{Z,\tilde{x}}$ and of $\Aut(I)$ on $\Def_I$. Therefore, Fact~\ref{Fact: Rim} implies that the natural identification $\widehat{\calO}_{Z,\tilde{x}}\cong R_1$ is 
	$\Stab(\tilde{x})\cong \Aut_1(I)$-equivariant.

	Now, applying Eqn. \eqref{E:sliceloc} together with  Lemma~\ref{Lemma: CompleteToInv}, we get 
	\begin{equation}\label{E:locring1}
	\widehat{\calO}_{\bar{J}(X), [I]} \cong \widehat{\calO}_{Z,\tilde{x}}^{\Stab(\tilde{x})}\cong R_1^{\Aut_1(I)}
	\end{equation}

	Now observe that the subgroup $\bbG_m \subset \Aut(I)$ of scalar automorphism acts trivially on
	$R_1$, as it follows by the explicit description of the action of $\Aut(I)$ on $R_1$ given in Theorem \ref{Thm: MainActionThm}. 
	Thus, the natural action of $\Aut(I)$ on $R_1$ factors through the quotient $\Aut(I)/\bbG_m$.  Because the natural map
	$\Aut_1(I) \to\Aut(I)/\bbG_m$ is surjective, we get that 
	\begin{equation}\label{E:locring2}
	R_1^{\Aut_1(I)}\cong R_1^{\Aut(I)}. 
	\end{equation}
	Combining \eqref{E:locring1} and \eqref{E:locring2}, we get the conclusion.
\end{proof}

In the introduction, we asked if Theorem~\ref{Thm: MainLocStr} remains valid when $X$
is allowed to have an automorphism of order $p$.  The condition on the automorphism group
was only used to apply the Luna Slice Theorem, which applies to actions of linearly reductive groups.
It is probably unreasonable to expect an analogue of the Slice Theorem to hold for actions of an arbitrary reductive group
(see \cite{martin}), but we only need an analogue for actions of $\Aut(X, I)$.  This group is an extension of the finite (reduced)
group $\Aut(X)$ by the multiplicative torus $\Aut(I)$, and it is known that the Slice Theorem holds for both the action of a
torus (it is linearly reductive) and for the action of a finite group (see e.g.~\cite[Prop.~2.2]{sga}).  Perhaps there is a Slice Theorem for actions of an extension of a torus by an arbitrary finite
group?

\vspace{0,2cm}

Finally, we can prove Theorem~\ref{Thm: MainThmB} from the introduction.

\begin{proof}[Proof of Theorem~\ref{Thm: MainThmB}]
Given Theorem~\ref{Thm: MainThmA}, this result follows from  \cite{local2}.  To establish Parts~(\ref{Thm: MainThmB1}) and (\ref{Thm: MainThmB2}) of Theorem~\ref{Thm:
MainThmB}, it is enough to fix a point $[I]\in \bar{J}(X)$ with $I$ polystable and prove the analogous statement about the completed local ring $
\widehat{\calO}_{\bar{J}(X), [I]}$.  For the remainder of the proof, we will work exclusively with $\widehat{\calO}_{\bar{J}(X), [I]}$. 

Theorem~\ref{Thm: MainThmA} identifies $\widehat{\calO}_{\bar{J}(X), [I]}$ with the $T_{\Gamma}$-invariant subring of
$R_I=\widehat{A}(\Gamma)[[W_1, \dots, W_{g_\Sigma}]]$, where $g_{\Sigma}:=g(X)-b_1(\Gamma)$. The ring $\widehat{A}(\Gamma)$ is the completion of the ring $A(\Gamma)$ defined in \cite[\S6]{local2} at the maximal ideal 
$\widetilde{\mathfrak{m}}:=(U_{\el}, U_{\er} \colon e \in \Sigma)$ and the action of $T_{\Gamma}$ on $\widehat{A}(\Gamma)$ is induced by the action on $A(\Gamma)$ defined in \emph{loc.~cit.} By \cite[Thm.~6.1]{local2}, the invariant subring of $A(\Gamma)$ is the cographic toric face ring $R(\Gamma)$ (from \cite[Def.~1.4]{local2}).  Thus, applying Lemma \ref{Lemma: CompleteToInv}, we get 
\begin{equation}\label{E:pres-loc}
	\widehat{\calO}_{\bar{J}(X), [I]} \cong \widehat{R}(\Gamma)[[W_1, \dots, W_{g_{\Sigma}}]]
\end{equation}
where $\widehat{R}(\Gamma)$ is the completion of $R(\Gamma)$ at the ideal $\mathfrak{m} :=\widetilde{\mathfrak{m}} \cap R(\Gamma)$ (which
appears in \cite[Prop.~4.6]{local2}).

We now prove Part~(\ref{Thm: MainThmB1}) of the theorem.  In \cite{local2}, it is proven that $R(\Gamma)$
is Gorenstein and has slc singularities (\cite[Thm.~5.7]{local2}), and these properties persist after  passing to a completion and adding power series variables.   

To establish Part \eqref{Thm: MainThmB2}, it is enough to show that the multiplicity $e_{\mathfrak{m}}(R(\Gamma))$ is equal to $1$
if and only if every element of $\Sigma$ corresponds to a separating edge of the dual graph $\Gamma_{X}$ of $X$.
The formula for $e_{\mathfrak{m}}(R(\Gamma))$ given in \cite[Thm. 5.7(vii)]{local2} shows that if $e_{\mathfrak{m}}(R(\Gamma))=1$ then $\Gamma\setminus E(\Gamma)_{\rm sep}$
has a unique totally cyclic orientation and this can only happen if $\Gamma\setminus E(\Gamma)_{\rm sep}$ is a disjoint union of points, i.e.~if $\Gamma$ is a tree.
As $\Gamma$ is obtained from $\Gamma_{X}$ by contracting the edges not in $\Sigma$, Part \eqref{Thm: MainThmB2} follows. 
\end{proof}

\section{Examples}\label{secexam}

In this section we present some examples to further elucidate the connections between the results in this paper and those of \cite{local2}.

\subsection{Integral curves}

Suppose that $X_0$ is an integral nodal curve of arithmetic genus $g$ and with $m$ nodes. From Definition \ref{D:phi-ss}, it follows that any compactified Jacobian of $X$  is equal to the (fine) moduli space 
$\bar{J}^d(X)$ of rank $1$, torsion-free sheaves on $X$ of degree $d$ (for some $d\in \bbZ$).

Consider now a point  $[I]\in \bar{J}^d(X)$ such that $I$ is not locally free at all the nodes of $X$ (note that $I$ is stable). Then Theorem \ref{Thm: MainThmA}\eqref{Thm: MainThmA1} gives that 
$$\widehat{\calO}_{\bar{J}^d(X),[I]}\cong  \wh{\bigotimes}_{i=1}^m  \frac{k[[X_i,Y_i]]}{X_iY_i}\wh{\bigotimes} k[[T_1,\ldots, T_{g-m}]]$$
We recover the well-know fact (see \cite[Prop. 2.7]{CMK}) that $\bar{J}^d(X_0)$ is isomorphic, formal (indeed \'etale) locally at $I$, to the product of $m$ nodes and a smooth factor.

\subsection{Two irreducible components} \label{sec2ir}
Let $X_n$ be a nodal curve consisting of two smooth irreducible components $C_1$ and $C_2$ of genera, respectively, $g_1$ and $g_2$, intersecting in $n\geq 2$ nodes, so that the arithmetic genus of $X_n$ is
$g=g_1+g_2+n-1$ (the case $n=1$ is easy: the curve $X_1$ is of compact type, hence all compactified Jacobians are smooth and isomorphic to the generalized Jacobian).
The dual graph $\Gamma_{X_n}$ of $X_n$ is depicted in Figure \ref{F:2comp} below together with an  orientation of it.

\begin{figure}[ht]\label{F:2comp}
\begin{equation*}
\xymatrix@=.5pc{
&&& \vdots &&& \\
 *{\bullet} \ar @{-}@/_1pc/[rrrrrr]|-{\SelectTips{cm}{}\object@{<}}^{e_{r+1}} \ar @{-} @/_3pc/[rrrrrr] |-{\SelectTips{cm}{}\object@{<}}_{e_n}
 \ar@{-} @/^.6pc/[rrrrrr]|-{\SelectTips{cm}{}\object@{>}}_{e_r}  \ar@{-}@/^2.5pc/[rrrrrr]|-{\SelectTips{cm}{}\object@{>}}^{e_1}^<{v_1}^>{v_2}  &&& &&&*{\bullet} \\
 &&& \vdots &&&
}
\end{equation*}
\caption{The orientation $\phi_r$ on $\Gamma_{X_n}$.}\label{Fig: 2vert}
\end{figure}
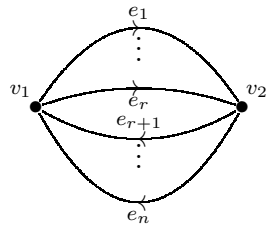
Let $\bar{J}(X_n)$ be a compactified Jacobian of $X_n$ and suppose that there exists a polystable sheaf $[I]\in \bar{J}(X_n)$ that fails to be locally free at the $n$ nodes of the curve.
Therefore, Eqn. \eqref{E:pres-loc} gives
$$\widehat{\calO}_{\bar{J}^d(X),[I]}\cong \widehat{R}(\Gamma_{X_n})[[W_1,\ldots, W_{g_1+g_2}]].$$

Using this presentation of the complete local ring and the results of \cite{local2}, we can prove the following properties:

$\bullet$ \'Etale locally at $[I]$, $\bar{J}(X_n)$ has $\displaystyle \sum_{r=1}^{n-1}\binom{n}{r}$ irreducible components, which are in bijection by \cite[Thm. 5.7(i)]{local2} with the  totally cyclic orientations
of $\Gamma_{n}$, all of which look like the orientation $\phi_r$ (for $1\leq r\leq n-1$) depicted in the figure above.

$\bullet$  The dimension of the Zariski tangent space $T_{[I]} \bar{J}(X_n)$ (i.e. the embedded dimension of $\bar{J}(X_n)$ at $[I]$) is equal to $g_1+g_2+2\binom{n}{2}$, as it follows from the fact (proved in \cite[Thm. 5.7(vi)]{local2}) that the embedded dimension of $R(\Gamma_{X_n})$ at the maximal ideal $\mathfrak{m}$ is equal to the number of oriented circuits of $\Gamma_{X_n}$, which is  $2\binom{n}{2}$.

$\bullet$ Finally, it can be proved using \cite[Thm. 5.7(vii)]{local2} that the multiplicity of $\bar{J}(X_n)$ at $[I]$ is 
$$\operatorname{mult}_{[I]}\bar J=\sum_{r=1}^{n-1}\binom{n}{r}\binom{n-2}{r-1}.$$

\bibliography{locstr}
\end{document}